\newcommand{\mycircle}{{\text{\small o}}}
\newcommand{\e}{\textup{e}}
\renewcommand{\i}{\textup{i}}
\newcommand{\sign}{\textup{sign}}
\newcommand{\supp}{\textup{supp}}
\newcommand{\dd}{{\rm d}}
\newcommand{\RR}{\mathbb R}
\newcommand{\CC}{\mathbb C}
\newcommand{\ZZ}{\mathbb Z}
\newcommand{\D}[4]{\prescript{#2}{#1}D_{#3}^{#4}}
\newcommand{\Dl}[1]{\D{-\infty}{}{x}{#1}}
\newcommand{\Dr}[1]{\D{x}{}{+\infty}{#1}}
\newcommand{\Ddl}[1]{\D{0}{}{x}{#1}}
\newcommand{\Ddr}[1]{\D{x}{}{1}{#1}}
\newcommand{\Ddlname}[2]{\D{0}{#2}{x}{#1}}
\newcommand{\Ddrname}[2]{\D{x}{#2}{1}{#1}}
\newcommand{\Ddlaname}[2]{\D{a}{#2}{x}{#1}}
\newcommand{\Ddrbname}[2]{\D{x}{#2}{b}{#1}}
\newcommand{\fourier}[1]{{\widehat{#1}}}
\newcommand{\conj}[1]{{\overline{#1}}}
\newtheorem{theorem}{Theorem}[section]
\newtheorem{lemma}[theorem]{Lemma}
\newtheorem{corollary}[theorem]{Corollary}
\newtheorem{remark}[theorem]{Remark}
\newtheorem{proposition}[theorem]{Proposition}
\newtheorem{definition}[theorem]{Definition}
\newcommand{\mylabel}[2]{#2\def\@currentlabel{#2}\label{#1}}
\newcommand{\cmag}[1]{{\color{black}#1}}
\date{}
\title{On the matrices in B-spline collocation methods for Riesz fractional equations and their spectral properties}
\author{M.\ Mazza$^{1,}$\thanks{Corresponding author
		\newline
		${}^1$ Department of Humanities and Innovation, University of Insubria, via Valleggio 11, 22100 Como, Italy (\texttt{mariarosa.mazza@uninsubria.it})
		\newline
		${}^2$ Department of Science and High Technology, University of Insubria, via Valleggio 11, 22100 Como, Italy (\texttt{marco.donatelli@uninsubria.it})
		\newline
		${}^3$ Department of Mathematics, University of Rome Tor Vergata, Via della Ricerca Scientifica 1, 00133 Rome, Italy (\texttt{\{manni,speleers\}@mat.uniroma2.it})
}\and M.\ Donatelli$^{2}$\and C.\ Manni$^{3}$\and H.\ Speleers$^{3}$}
\begin{document}

\maketitle

\begin{abstract}
In this work, we focus on a fractional differential equation in Riesz form discretized by a polynomial B-spline collocation method. For an arbitrary polynomial degree $p$, we show that the resulting coefficient matrices possess a Toeplitz-like structure. We investigate their spectral properties via their symbol and we prove that, like for second order differential problems, also in this case the given matrices are ill-conditioned both in the low and high frequencies for large $p$. More precisely, in the fractional scenario the symbol has a single zero at $0$ of order $\alpha$, with $\alpha$ the fractional derivative order that ranges from $1$ to $2$, and it presents an exponential decay to zero at $\pi$ for increasing $p$ that becomes faster as $\alpha$ approaches $1$. This translates in a mitigated conditioning in the low frequencies and in a deterioration in the high frequencies when compared to second order problems. Furthermore, the derivation of the symbol reveals another similarity of our problem with a classical diffusion problem. Since the entries of the coefficient matrices are defined as evaluations of fractional derivatives of the B-spline basis at the collocation points, we are able to express the central entries of the coefficient matrix as inner products of two fractional derivatives of cardinal B-splines. Finally, we perform a numerical study of the approximation behavior of polynomial B-spline collocation. This study suggests that, in line with non-fractional diffusion problems, the approximation order \cmag{for smooth solutions} in the fractional case is $p+2-\alpha$ for even $p$, and $p+1-\alpha$ for odd $p$.

\end{abstract}

\noindent\textbf{Key words:} Spectral distribution, B-spline collocation, Isogeometric analysis, Fractional operators, Toeplitz matrices

\noindent\textbf{MSC 2010:}
15A12, 15A18, 41A15, 65M70, 26A33, 15B05

\section{Introduction}

Fractional diffusion equations (FDEs) generalize classical partial differential equations (PDEs). Their recent success is due to the non-local behavior of fractional operators resulting in an appropriate modeling of anomalous diffusion phenomena that appear in several applicative fields, like imaging or electrophysiology \cite{imag,orovio}. In particular, a standard diffusion equation can be ``fractionalized'', either by replacing the derivative in time with a fractional one whose fractional order ranges from $0$ to $1$, or by introducing a fractional derivative in space with order between $1$ and $2$. The two approaches can also be combined and lead to similar computational issues. 

\begin{sloppypar}
The improved physical description of the considered phenomenon obtained by ``fractionalizing'' the derivatives, however, translates in a \cmag{more challenging} numerical treatment of the corresponding discretized problems. Indeed, \cmag{the evaluation/approximation of a fractional operator is numerically more expensive (and often less stable). Moreover,} even when standard local discretization methods are adopted, the non-locality of the fractional operators causes absence of sparsity in the discretization matrices. This makes FDEs computationally more demanding than PDEs. 
\end{sloppypar}

\cmag{Various numerical discretization methods for FDE problems (e.g., finite differences, finite volumes, finite elements, spectral methods) can be found in the literature. We refer the reader to
	\cite{ervin,Moroney,LW,Mao-sinum-18,Meer1,TZD,WD,Mao-sisc-17} and references therein.
	In the case of regular spatial domain subdivisions,} the discretization matrices inherit a Toeplitz-like structure from the space-invariant property of the underlying operators that can be exploited for the design of ad hoc iterative schemes of multigrid and preconditioned Krylov type (see, e.g., \cite{mazza2016,sisc,LS,mazza2017,PNW,PS}).
In the context of finite difference/volume discretizations, we mention the structure preserving preconditioning and the algebraic multigrid methods presented in \cite{mazza2016,sisc}. Both strategies are based on the spectral analysis of the coefficient matrices via their symbol, a function which provides an approximation of their eigenvalues/singular values.

\cmag{A similar symbol-based approach has also been successfully employed in the context of isogeometric analysis (IgA) for the discretization of integer order differential problems;} see, e.g., \cite{cmame2,sinum,MMRSS}. In these papers, the spectral information provided by the symbol has been leveraged for the design of effective preconditioners and fast multigrid/multi-iterative solvers whose convergence speed is independent of the fineness parameters and the approximation parameters.

\cmag{The present work aims at uncovering the structure and studying the symbol of the discretization matrices obtained by IgA collocation for FDE problems. As a first step towards the spectral treatment of general differential problems involving fractional diffusion operators, we consider here} the following fractional diffusion boundary value problem with absorbing boundary conditions:
\begin{equation}\label{eq:FDE}
\begin{cases}
\frac{\dd^\alpha u(x)}{\dd|x|^\alpha}=s(x), & x\in\Omega,\\
u(x)=0, & x\in\RR\backslash\Omega,
\end{cases}
\end{equation}
where $\Omega:=(0,1)$, $\alpha\in(1,2)$, and 
\[\frac{\dd^\alpha u(x)}{\dd|x|^\alpha}:=\frac{1}{2\cos({\pi\alpha}/{2})}\left(\Ddlname{\alpha}{RL}u(x)+\Ddrname{\alpha}{RL}u(x)\right)\]
is the so-called Riesz fractional operator, while $\Ddlname{\alpha}{RL}u(x)$, $\Ddrname{\alpha}{RL}u(x)$ are the left and right Riemann-Liouville fractional derivatives of $u$ (see Section~\ref{sub:fracder} for their definition). 
More precisely, we are interested in a polynomial B-spline collocation-based discretization of \eqref{eq:FDE} where the so-called Greville abscissae are chosen as collocation points. 

Collocation methods based on polynomial splines were applied to fractional problems for the first time in \cite{blank} and further developed in \cite{pedas}. 
Polynomial B-spline bases have been used for solving time-fractional problems in \cite{Pitolli} and (left-sided) space-fractional problems in \cite{Pitolli2}.
Among non-polynomial spline collocation methods for fractional problems, we mention the work \cite{PP} in which the authors explore the application of fractional B-splines.

Our choice of classical polynomial B-splines is motivated by the fact that, contrarily to their fractional counterpart, they have compact support and naturally fulfill boundary and/or initial conditions. Furthermore, they possess good approximation properties. Seminal results concerning the structure of the quadratic spline collocation matrices can be found in \cite{QSC2}. Therein, the authors recognize the Toeplitz-like structure of the coefficient matrices and use a classical circulant preconditioner to solve the corresponding linear systems by means of Krylov methods. 

To the best of our knowledge, this is the first time that the structure and the spectral properties of polynomial B-spline collocation matrices are investigated for an arbitrary polynomial degree $p$. We show that the coefficient matrices retain the Toeplitz-like structure and we study their spectral properties via their symbol. It turns out that the symbol: 
\begin{enumerate}
	\item[{(a)}] has a single zero at $0$ of order $\alpha$; 
	\item[{(b)}] presents an exponential decay to zero at $\pi$ for increasing $p$, a so-called numerical zero, that becomes faster as $\alpha$ approaches $1$;
	\item[{(c)}] is bounded in the proximity of $\pi$.
\end{enumerate}
This translates in a mitigated conditioning in the low frequencies and in a deterioration in the high frequencies when compared to second order problems (see \cite{DGMSS}). \cmag{The symbol, and so the (asymptotic) spectral properties of the involved matrices, do not change if reaction and/or advection terms are added to \eqref{eq:FDE}.
	
	As a side result of the symbol computation,} we propose a new way of expressing both a left and a right fractional derivative of a cardinal B-spline as inner products of two fractional derivatives of cardinal B-splines. 

Furthermore, we provide a numerical study of the approximation behavior of polynomial B-spline collocation for an arbitrary degree $p$. \cmag{It turns out that the approximation order for smooth solutions is $p+2-\alpha$ for even $p$, and $p+1-\alpha$ for odd $p$. This is again in agreement with the approximation results known for standard (non-fractional) diffusion problems \cite{ABHRS}.
	We refer the reader to \cite{Cai-2019,Hao-sinum-20,Mao-sinum-18} for a smoothness analysis of the solution in (weighted) Sobolev spaces.}


The paper is organized as follows. Section~\ref{sec:preliminaries} is devoted to notations, definitions, and preliminary results. In Section~\ref{sec:frac-cardinal} we present a new way of writing the fractional derivative of a cardinal B-spline. 
In Section~\ref{sec:iga} we describe the IgA collocation approximation of the problem reported in \eqref{eq:FDE}, while in Sections~\ref{sec:symbol-properties} and \ref{sec:symbol} we perform a detailed spectral analysis of the resulting coefficient matrices. We validate our theoretical spectral findings with a selection of numerical experiments in Section~\ref{sec:numerics} and we do a numerical study of the approximation order of the polynomial B-spline collocation method as well. We end with some concluding remarks in Section~\ref{sec:conclusions}.

\section{Preliminaries}\label{sec:preliminaries}

In this section we collect some preliminary tools on fractional derivatives, spectral analysis and IgA discretizations. Firstly, we give two definitions of fractional derivatives (Section~\ref{sub:fracder}). Secondly, after introducing
the definition of spectral distribution of general matrix-sequences, we summarize the essentials of Toeplitz sequences (Section~\ref{sub:tools}). Finally, we recall the definition of B-splines and cardinal B-splines (Section~\ref{sub:bsplines}).

\subsection{Fractional derivatives}\label{sub:fracder}
A common definition of fractional derivatives is given by the Riemann-Liouville formula. For a given function $u$ with absolutely continuous $(m-1)$-th derivative on $[a,b]$, the left and right Riemann-Liouville fractional derivatives of order $\alpha$ are defined by
\begin{align*}
\begin{split}
\Ddlaname{\alpha}{RL}u(x)&:=\frac{1}{\Gamma(m-\alpha)}\frac{{\dd}^m}{\dd x^m}\int_{a}^x(x-y)^{m-\alpha-1}u(y)\,\dd y,\\
\Ddrbname{\alpha}{RL}u(x)&:=\frac{(-1)^m}{\Gamma(m-\alpha)}\frac{{\dd}^m}{\dd x^m}\int_{x}^{b}(y-x)^{m-\alpha-1}u(y)\,\dd y,
\end{split}
\end{align*}
with $m$ the integer such that $m-1\le\alpha<m$ and $\Gamma$ the Euler gamma function. Note that the left fractional derivative of the function $u$ computed at $x$ depends on all function values to the left of $x$, while the right fractional derivative depends on the ones to the right.

Another common definition of fractional derivative was proposed by Caputo:
\begin{align}\label{eq:caputo}
\begin{split}
\Ddlaname{\alpha}{C}u(x)&:=\frac{1}{\Gamma(m-\alpha)}\int_{a}^x(x-y)^{m-\alpha-1}u^{(m)}(y)\,\dd y,\\
\Ddrbname{\alpha}{C}u(x)&:=\frac{(-1)^m}{\Gamma(m-\alpha)}\int_{x}^{b}(y-x)^{m-\alpha-1}u^{(m)}(y)\,\dd y.
\end{split}
\end{align}
Note that \eqref{eq:caputo} requires the $m$-th derivative of $u$ to be absolutely integrable. Higher regularity of the solution is typically imposed in time rather than in space. As a consequence, the Caputo formulation is mainly used for fractional derivatives in time, while Riemann-Liouville's is preferred for fractional derivatives in space. The use of Caputo's derivative provides some advantages in the treatment of boundary conditions when applying the Laplace transform method (see \cite[Chapter~2.8]{Podlubny}).

The Riemann-Liouville derivatives are related to the Caputo ones as follows
\begin{align}\label{eq:relRLC}
\begin{split}
\Ddlaname{\alpha}{RL}u(x)&=\Ddlaname{\alpha}{C}u(x)+ \sum_{k=0}^{m-1} \frac{(x-a)^{k-\alpha}}{\Gamma(k-\alpha+1)}u^{(k)}(a^+),\\
\Ddrbname{\alpha}{RL}u(x)&=\Ddrbname{\alpha}{C}u(x)+ \sum_{k=0}^{m-1} \frac{(-1)^k(b-x)^{k-\alpha}}{\Gamma(k-\alpha+1)}u^{(k)}(b^-),
\end{split}
\end{align}
and the two coincide if $u$ satisfies homogeneous conditions, i.e., $u^{(k)}(a^+)=u^{(k)}(b^-)=0$ for $k=0,\ldots,m-1$.

\begin{remark}
	\cmag{Throughout the paper, whenever we write $\Ddlaname{\alpha}{RL}u(\xi)$ or $\Ddrbname{\alpha}{RL}u(\xi)$ for a fixed $\xi$ we mean
		$\Ddlaname{\alpha}{RL}u(x)$ or $\Ddrbname{\alpha}{RL}u(x)$, where $x=\xi$,
		respectively.}
\end{remark}

\subsection{Spectral tools}\label{sub:tools}
We begin with the formal definition of spectral distribution in the sense of the eigenvalues for a general matrix-sequence.

\begin{definition}
	Let $f:G\to\CC$ be a measurable function, defined on a measurable set $G\subset\RR^k$ with $k\ge 1$ and Lebesgue measure $0<\mu_k(G)<\infty$. Let  $\mathcal C_0(\CC)$ be the set of continuous functions with compact support over $\CC$, and let $A_n$ be a matrix of size $d_n$ with eigenvalues $\lambda_j(A_n)$, $j=1,\ldots,d_n$.
	The matrix-sequence $\{A_n\}_n$ (with $d_n<d_{n+1}$) is {distributed as the pair $(f,G)$ in the sense of the eigenvalues,} denoted by $$\{A_n\}_n\sim_\lambda(f,G),$$ if the following limit relation holds for all $F\in\mathcal C_0(\CC)$:
	\begin{align}\label{distribution:sv-eig}
	\lim_{n\to\infty}\frac{1}{d_n}\sum_{j=1}^{d_n}F(\lambda_j(A_n))=
	\frac1{\mu_k(G)}\int_G F(f(t))\, \dd t.
	\end{align}
	We say that $f$ is the {(spectral) symbol} of the matrix-sequence $\{A_{n}\}_{n}$.
\end{definition}

\begin{remark}
	Throughout the paper, when it is not of crucial importance to know what is the domain of $f$, we replace the notation $\{A_n\}_n\sim_\lambda(f,G)$ with $\{A_n\}_n\sim_\lambda f$.
\end{remark}

\begin{remark}
	When $f$ is continuous, an informal interpretation of the limit relation \eqref{distribution:sv-eig} is that when the matrix-size is sufficiently large, the eigenvalues of $A_n$ can be approximated by a sampling of $f$ on a uniform equispaced grid of the domain $G$.
\end{remark}

The following result allows us to determine the spectral distribution of a Hermitian matrix-sequence plus a correction (see \cite{BarSer}).

\begin{theorem}\label{thm:quasi-herm}
	Let $\{X_n\}_n$ and $\{Y_n\}_n$ be two matrix-sequences, with $X_n,Y_n\in\CC^{d_n\times d_n}$, and assume that 
	\begin{itemize}
		\item[{(a)}] $X_n$ is Hermitian for all $n$ and $\{X_n\}_n\sim_{\lambda}f$;
		\item[{(b)}] $\|Y_n\|_F=o(\sqrt{d_n})$ as $n\rightarrow\infty$, with $\|\cdot\|_{F}$ the Frobenius norm.
	\end{itemize}
	Then, $\{X_n+Y_n\}_n\sim_{\lambda}f$.
\end{theorem}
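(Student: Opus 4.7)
The plan is to verify \eqref{distribution:sv-eig} for every $F\in\mathcal{C}_0(\CC)$ applied to the eigenvalues of $X_n+Y_n$. The first move is to decompose $Y_n=H_n+\i S_n$ into Hermitian and skew-Hermitian parts, with $H_n:=(Y_n+Y_n^*)/2$ and $S_n:=(Y_n-Y_n^*)/(2\i)$ both Hermitian; a standard trace identity gives $\|Y_n\|_F^2=\|H_n\|_F^2+\|S_n\|_F^2$, so both pieces inherit Frobenius norm $o(\sqrt{d_n})$. This reduces the problem to two successive perturbations of the Hermitian baseline $X_n$: first a Hermitian $H_n$ (normality preserved), then a skew-Hermitian $\i S_n$ (normality generally lost).

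For the Hermitian step I would invoke the Hoffman--Wielandt/Mirsky inequality on monotonically ordered eigenvalues to obtain
\[\sum_{j=1}^{d_n}\bigl|\lambda_j^\uparrow(X_n+H_n)-\lambda_j^\uparrow(X_n)\bigr|^2\le\|H_n\|_F^2=o(d_n).\]
Cauchy--Schwarz upgrades this $\ell^2$ bound to an $\ell^1$ bound of order $o(d_n)$, and for any Lipschitz $F$ the telescoping estimate $\bigl|\sum_j F(\lambda_j(X_n+H_n))-\sum_j F(\lambda_j(X_n))\bigr|\le\mathrm{Lip}(F)\cdot o(d_n)$, after division by $d_n$, gives $\{X_n+H_n\}_n\sim_\lambda f$ on Lipschitz test functions. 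Approximating an arbitrary $F\in\mathcal{C}_0(\CC)$ by Lipschitz functions uniformly on a compact set containing the relevant eigenvalues then extends the conclusion to all of $\mathcal{C}_0(\CC)$.

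The main obstacle is adding $\i S_n$: the sum $(X_n+H_n)+\i S_n$ is generally non-normal, so Mirsky no longer controls eigenvalue displacement under a Frobenius-small perturbation. My plan is a two-scale decomposition of the Hermitian $S_n$ through its spectral decomposition: for each $\varepsilon>0$ truncate to write $S_n=S_n^{(\varepsilon)}+R_n^{(\varepsilon)}$ with $\|S_n^{(\varepsilon)}\|_2\le\varepsilon$ and $\mathrm{rank}(R_n^{(\varepsilon)})\le\|S_n\|_F^2/\varepsilon^2=o(d_n)$. The small-norm piece is absorbed by Bauer--Fike (the eigenvalue cloud shifts uniformly by at most $\varepsilon$), while the low-rank piece is absorbed by the standard fact that eigenvalue distributions of matrix-sequences are stable under rank-$o(d_n)$ perturbations (a consequence of interlacing/Cauchy-type identities tested against bounded $F\in\mathcal{C}_0(\CC)$). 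Sending $\varepsilon\to 0$ and using uniform continuity of $F$ on a fixed compact set concludes the argument; this non-normal passage is precisely what makes the theorem nontrivial and is the genuine content of [BarSer], the Hermitian step and the Lipschitz-to-$\mathcal{C}_0$ passage being comparatively routine.
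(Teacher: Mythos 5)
Your reduction to the Hermitian/skew-Hermitian splitting $Y_n=H_n+\i S_n$ and the treatment of the Hermitian part are correct: the identity $\|Y_n\|_F^2=\|H_n\|_F^2+\|S_n\|_F^2$ holds (the cross terms are the trace of a commutator), Hoffman--Wielandt plus Cauchy--Schwarz gives the $o(d_n)$ bound on the $\ell^1$ displacement of the ordered eigenvalues, and the passage from Lipschitz test functions to all of $\mathcal C_0(\CC)$ is unproblematic since any such $F$ is uniformly approximable by compactly supported Lipschitz functions on the whole of $\CC$. Note, for the record, that the paper itself gives no proof of Theorem~\ref{thm:quasi-herm}; it is quoted from \cite{BarSer}, so the comparison below is with the argument established there and in its antecedents rather than with an internal proof.

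The skew-Hermitian step is where your argument has a genuine gap, and both of its ingredients fail as stated. First, Bauer--Fike applied to the Hermitian base $X_n+H_n$ only yields that every eigenvalue of $X_n+H_n+\i S_n^{(\varepsilon)}$ lies within distance $\varepsilon$ of the spectrum of $X_n+H_n$; it gives neither a bijective matching of the two spectra nor a reverse inclusion, so it does not control $\frac{1}{d_n}\sum_jF(\lambda_j)$. When the $\varepsilon$-disks around the eigenvalues of $X_n+H_n$ overlap into long connected components (the generic situation when the eigenvalues fill out the essential range of $f$), the perturbed eigenvalues are free, as far as Bauer--Fike is concerned, to redistribute arbitrarily inside each component; there is no Hoffman--Wielandt or Weyl analogue with a dimension-free constant for non-normal perturbations, and this is precisely the hard point of the theorem. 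Second, the ``standard fact'' that eigenvalue distributions are stable under rank-$o(d_n)$ perturbations is false for general matrix-sequences: the nilpotent Jordan block $J_n$ satisfies $\{J_n\}_n\sim_\lambda 0$, while the circulant shift $J_n+e_ne_1^T$ differs from it by a rank-one matrix and satisfies $\{J_n+e_ne_1^T\}_n\sim_\lambda \e^{\i\theta}$. Interlacing, which underlies low-rank stability, is a Hermitian phenomenon, and at the moment you add $R_n^{(\varepsilon)}$ the base matrix is already non-normal; reversing the order of the two sub-steps does not help, since then the $\varepsilon$-norm piece lands on a non-normal base. The route actually taken in \cite{BarSer}, building on the Golinskii--Serra-Capizzano theorem, is different in kind: one confines the spectrum of $X_n+Y_n$ to a shrinking neighbourhood of the real line, proves convergence of the normalized traces of polynomials in $X_n+Y_n$ using $\|Y_n\|_{1,\ast}\le\sqrt{d_n}\,\|Y_n\|_F=o(d_n)$, and then upgrades from polynomials in $z$ to all of $\mathcal C_0(\CC)$ via Mergelyan's theorem, which applies exactly because the limit set is a compact subset of $\RR$ with empty interior and connected complement. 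Some tool of this holomorphic/moment type is needed to close your argument; the small-norm-plus-low-rank splitting alone cannot do it.
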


For a given matrix $X\in\CC^{m\times m}$, let us denote by $\|X\|_{1,\ast}$ the trace norm defined by $\|X\|_{1,\ast}:=\sum_{j=1}^{m}\sigma_j(X)$, where $\sigma_j(X)$ are the $m$ singular values of $X$.

\begin{corollary}\label{cor:quasi-herm}
	Let $\{X_n\}_n$ and $\{Y_n\}_n$ be two matrix-sequences, with $X_n,Y_n\in\CC^{d_n\times d_n}$, and assume that (a) in Theorem~\ref{thm:quasi-herm} is satisfied. Moreover, assume that any of the following two conditions is met:
	\begin{itemize}
		\item $\|Y_n\|_{1,\ast}=o(\sqrt{d_n})$;
		\item $\|Y_n\|_2=o(1)$, with $\|\cdot\|_2$ the spectral norm.
	\end{itemize}
	Then, $\{X_n+Y_n\}_n\sim_{\lambda}f$.	
\end{corollary}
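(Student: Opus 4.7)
The plan is to reduce Corollary~\ref{cor:quasi-herm} directly to Theorem~\ref{thm:quasi-herm}. Since hypothesis (a) of the theorem is already assumed, the only thing to verify is hypothesis (b), namely that $\|Y_n\|_F=o(\sqrt{d_n})$, under either of the two alternative assumptions on $\|Y_n\|_{1,\ast}$ or $\|Y_n\|_2$.

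First, I would recall the standard comparison between the Schatten norms in terms of the singular values $\sigma_1(Y_n)\ge\cdots\ge\sigma_{d_n}(Y_n)\ge 0$:
\[
\|Y_n\|_F^2=\sum_{j=1}^{d_n}\sigma_j(Y_n)^2,\qquad \|Y_n\|_{1,\ast}=\sum_{j=1}^{d_n}\sigma_j(Y_n),\qquad \|Y_n\|_2=\sigma_1(Y_n).
\]
From these it is immediate that $\sigma_j(Y_n)^2\le \sigma_1(Y_n)\,\sigma_j(Y_n)$ for every $j$, yielding the key inequality
\[
\|Y_n\|_F^2\le\|Y_n\|_2\,\|Y_n\|_{1,\ast}.
\]
Two one-sided consequences of the same elementary bookkeeping are $\|Y_n\|_F\le \|Y_n\|_{1,\ast}$ (since $\sum \sigma_j^2\le(\sum\sigma_j)^2$) and $\|Y_n\|_F\le\sqrt{d_n}\,\|Y_n\|_2$ (since each $\sigma_j\le\sigma_1$).

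Now I would consider the two assumptions separately. If $\|Y_n\|_{1,\ast}=o(\sqrt{d_n})$, the first consequence gives $\|Y_n\|_F\le \|Y_n\|_{1,\ast}=o(\sqrt{d_n})$. If instead $\|Y_n\|_2=o(1)$, the second gives $\|Y_n\|_F\le \sqrt{d_n}\,\|Y_n\|_2 = o(\sqrt{d_n})$. In either case, hypothesis (b) of Theorem~\ref{thm:quasi-herm} holds.

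The conclusion $\{X_n+Y_n\}_n\sim_\lambda f$ then follows by a direct application of Theorem~\ref{thm:quasi-herm}. There is no real obstacle here; the whole content of the corollary is the trivial comparison between the Frobenius norm and the trace and spectral norms, and the main thing to be careful about is simply to state these inequalities cleanly and to verify that, in both assumed regimes, the resulting estimate is genuinely $o(\sqrt{d_n})$ rather than merely $O(\sqrt{d_n})$.
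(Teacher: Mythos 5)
Your argument is correct and is precisely the reduction the paper intends: the corollary is stated without proof as an immediate consequence of Theorem~\ref{thm:quasi-herm}, obtained by checking condition (b) via the standard singular-value comparisons $\|Y_n\|_F\le\|Y_n\|_{1,\ast}$ and $\|Y_n\|_F\le\sqrt{d_n}\,\|Y_n\|_2$. Both cases are handled cleanly and the $o(\sqrt{d_n})$ estimate is verified correctly in each.
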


\begin{remark}
	In \cite{BarSer} the authors conjecture that Theorem~\ref{thm:quasi-herm} is also valid when (b) is replaced by the weaker condition $\|Y_n\|_{1,\ast}=o(d_n)$.
\end{remark}

We now recall the definition of Toeplitz sequences generated by univariate functions in $L^1([-\pi, \pi])$.
\begin{definition}
	Let $f \in L^1([-\pi, \pi])$ and let $ f_k$ be its Fourier coefficients,
	\begin{equation}\label{eq:toeplitz-coeff}
	f_k:=\frac{1}{2\pi}\int_{-\pi}^{\pi} f(\theta)\e^{-\i(k\theta)}\,\dd\theta,\quad k\in\ZZ.
	\end{equation}
	The $n$-th Toeplitz matrix associated with $f$ is the $n \times n$ matrix defined by
	\begin{equation}\label{eq:toeplitz}
	T_{n}(f):=\begin{bmatrix}
	f_0 & f_{-1} & \cdots & \cdots & f_{-(n-1)} \\
	f_1 & \ddots & \ddots & & \vdots \\
	\vdots & \ddots & \ddots & \ddots & \vdots\\
	\vdots & & \ddots & \ddots & f_{-1}\\
	f_{n-1} & \cdots & \cdots & f_1 & f_0
	\end{bmatrix}\in\CC^{n\times n}.
	\end{equation}
	The matrix-sequence $\{T_n(f)\}_{n}$ is called the {Toeplitz sequence generated by $f$}.
\end{definition}

For real-valued Toeplitz matrix-sequences, the following theorem holds (see, e.g., \cite{GSz}).
\begin{theorem}\label{szego}
	Let $f\in L^1([\pi,\pi])$ be a real-valued function. Then, $$\{T_n(f)\}_n\sim_\lambda(f,[-\pi,\pi]).$$
\end{theorem}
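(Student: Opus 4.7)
The plan is to prove this classical result (due to Szegő and Grenander) via the moment method, first for bounded symbols and then by approximation for the general $L^1$ case. Since $f$ is real-valued, $T_n(f)$ is Hermitian and its eigenvalues are real. It therefore suffices to check the limit relation \eqref{distribution:sv-eig} when $F(x)=x^k$ for each $k\ge 0$, because a standard Weierstrass approximation argument (using the uniform boundedness of the eigenvalues on compact sets) then extends the distribution result to every $F\in\mathcal C_0(\CC)$.

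First I would treat the case $f\in L^\infty([-\pi,\pi])$, so that $\|T_n(f)\|_2\le\|f\|_\infty$ and all eigenvalues lie in the fixed bounded interval $[-\|f\|_\infty,\|f\|_\infty]$. The core step is to show that for every integer $k\ge 0$,
\begin{equation*}
\frac{1}{n}\mathrm{tr}(T_n(f)^k)\longrightarrow \frac{1}{2\pi}\int_{-\pi}^{\pi}f(\theta)^k\,\dd\theta, \qquad n\to\infty.
\end{equation*}
This follows from the algebraic identity $T_n(f)T_n(g)=T_n(fg)+E_n(f,g)$, where the correction $E_n(f,g)$ is of Hankel type and is concentrated in the top-left and bottom-right corners of the matrix. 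Induction on $k$ then yields $T_n(f)^k=T_n(f^k)+R_n^{(k)}$ with $\|R_n^{(k)}\|_{1,\ast}=o(n)$, while $\mathrm{tr}(T_n(f^k))=n\cdot(f^k)_0=\frac{n}{2\pi}\int f^k\,\dd\theta$ by definition \eqref{eq:toeplitz-coeff}. Dividing by $n$ gives the desired moment convergence, and since the eigenvalues live in a compact set, the associated moment problem is determinate, so the empirical spectral measures converge weakly to the push-forward of the uniform measure on $[-\pi,\pi]$ under $f$. This is exactly $\{T_n(f)\}_n\sim_\lambda(f,[-\pi,\pi])$ for bounded $f$.

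For the general case $f\in L^1([-\pi,\pi])$ real-valued, the idea is to approximate $f$ by bounded symbols. Set $f_M:=f\,\chi_{\{|f|\le M\}}$ so that $h_M:=f-f_M$ is real-valued with $\|h_M\|_{L^1}\to 0$ as $M\to\infty$. Decomposing $h_M=h_M^+-h_M^-$ and using the positivity $T_n(g)\succeq 0$ whenever $g\ge 0$, one obtains the trace-norm estimate
\begin{equation*}
\|T_n(h_M)\|_{1,\ast}\le \|T_n(h_M^+)\|_{1,\ast}+\|T_n(h_M^-)\|_{1,\ast}=\frac{n}{2\pi}\|h_M\|_{L^1}.
\end{equation*}
Combining this with the Hoffman--Wielandt (or Mirsky) inequality for Hermitian matrices, namely $\sum_j|\lambda_j(T_n(f))-\lambda_j(T_n(f_M))|\le \|T_n(h_M)\|_{1,\ast}$, allows me to compare the normalized spectral averages of $T_n(f)$ and $T_n(f_M)$ up to an error of size $\|F\|_{\mathrm{Lip}}\|h_M\|_{L^1}/(2\pi)$. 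Since the bounded case was already handled, letting first $n\to\infty$ and then $M\to\infty$ (with a suitable Lipschitz approximation of $F\in\mathcal C_0(\CC)$) yields the claim.

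The most delicate ingredient is the trace-norm control on the Hankel-type correction $E_n(f,g)$ appearing in the product formula: one must quantify how the corner contributions decay as $n\to\infty$, typically by first proving the Frobenius estimate $\|E_n(f,g)\|_F^2=o(n)$ for $f,g\in L^\infty$ and then propagating this to the trace norm of every power $R_n^{(k)}$ via Cauchy--Schwarz and repeated use of $\|T_n(f)\|_2\le\|f\|_\infty$. This is precisely the place where the interpretation of $T_n(f)$ as a finite truncation of an infinite Toeplitz operator, and the classical Szegő decay of Fourier coefficients, enter essentially.
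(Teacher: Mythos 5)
The paper does not prove this statement: Theorem~\ref{szego} is recalled as a classical result and attributed to Grenander--Szeg\H{o} \cite{GSz}, so there is no in-paper argument to compare yours against. On its own merits, your proof is correct and is essentially the standard modern route (moment method for bounded symbols, then $L^1$ truncation in the style of Tyrtyshnikov--Zamarashkin and Tilli). The one step you rightly single out as delicate does go through: writing the correction in $T_n(f)T_n(g)=T_n(fg)+E_n(f,g)$ via finite sections of Hankel products, one gets $\|E_n(f,g)\|_F\le \|f\|_\infty\,\bigl(\sum_{k\ge1}\min(k,n)|g_k|^2\bigr)^{1/2}=o(\sqrt{n})$ by dominated convergence, hence $\|E_n(f,g)\|_{1,\ast}\le\sqrt{n}\,\|E_n(f,g)\|_F=o(n)$, and the induction $T_n(f)^k=T_n(f^k)+R_n^{(k)}$ with $\|R_n^{(k)}\|_{1,\ast}=o(n)$ follows using $\|T_n(f)\|_2\le\|f\|_\infty$. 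The $L^1$ step is also sound: for $g\ge0$ one has $T_n(g)\succeq0$ so $\|T_n(g)\|_{1,\ast}=\operatorname{tr}(T_n(g))=\frac{n}{2\pi}\|g\|_{L^1}$, and the eigenvalue comparison you invoke is the Lidskii--Mirsky trace-norm inequality (Hoffman--Wielandt is the Frobenius version, but the trace-norm form is what you actually use and it is valid for Hermitian matrices). Two cosmetic points: the truncation argument is genuinely needed because for unbounded $f$ the eigenvalues are not uniformly bounded, so the Weierstrass/moment reduction only applies after truncation, exactly as you arranged it; and the statement's domain ``$L^1([\pi,\pi])$'' is a typo for $L^1([-\pi,\pi])$, which your proof implicitly corrects.
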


\subsection{B-splines and cardinal B-splines}\label{sub:bsplines}
For $p\ge0$ and $n\ge1$, consider the following uniform knot sequence
\begin{equation*}
\xi_1=\cdots=\xi_{p+1}:=0<\xi_{p+2}<\cdots<\xi_{p+n}<1=:\xi_{p+n+1}=\cdots=\xi_{2p+n+1},
\end{equation*}
where
\begin{equation*}
\xi_{i+p+1}:=\frac{i}{n}, \quad i=0,\ldots,n.
\end{equation*}
This knot sequence allows us to define $n+p$ B-splines of degree $p$.
\begin{definition}
	The B-splines of degree $p$ over a uniform mesh of $[0,1]$, consisting of $n$ intervals, are denoted by
	\begin{equation*}
	N_{i}^p:[0,1]\rightarrow \RR, \quad i=1,\ldots,n+p,
	\end{equation*}
	and defined recursively as follows: for $1 \le i\le n+2p$,
	\begin{equation*}
	N_i^0(x):=\begin{cases}
	1, & x \in [\xi_i,\xi_{i+1}), \\
	0, & \text{otherwise};
	\end{cases}
	\end{equation*}
	for $1\le k\le p$ and $1\le i\le n+2p-k$,
	\begin{equation*}
	N_i^k(x):=\frac{x-\xi_i}{\xi_{i+k}-\xi_i}N_{i}^{k-1}(x)+\frac{\xi_{i+k+1}-x}{\xi_{i+k+1}-\xi_{i+1}}N_{i+1}^{k-1}(x),
	\end{equation*}
	where a fraction with zero denominator is assumed to be zero.
\end{definition}
It is well known that the B-splines $N_{i}^p$, $i=1,\ldots,n+p$, are linearly independent and they enjoy the following list of properties (see, e.g.,~\cite{deBoor,LycheMS2018}).
\begin{itemize}
	\item Local support:
	\begin{equation}\label{eq:spline-support}
	\supp(N_{i}^p)=[\xi_i,\xi_{i+p+1}], \quad i=1,\ldots,n+p;
	\end{equation}
	\item Smoothness: 
	\begin{equation*}
	N_{i}^p \in \mathcal{C}^{p-1}(0,1), \quad i=1,\ldots,n+p;
	\end{equation*}
	\item Differentiation: 
	\begin{equation}\label{eq:spline-diff}
	\left(N_{i}^p(x)\right)' = p\left(\frac{N_{i}^{p-1}(x)}{\xi_{i+p}-\xi_i}-
	\frac{N_{i+1}^{p-1}(x)}{\xi_{i+p+1}-\xi_{i+1}}\right), \quad i=1,\ldots,n+p, \quad p \geq 1;
	\end{equation}
	\item Non-negative partition of unity:
	\begin{equation*}
	N_{i}^p(x)\ge0, \quad i=1,\ldots,n+p, \qquad \sum_{i=1}^{n+p}N_{i}^p(x)=1;
	\end{equation*}
	\item Vanishing at the boundary:  
	\begin{equation} \label{eq:spline-boundary}
	N_{i}^p(0)=N_{i}^p(1)=0, \quad i=2,\ldots,n+p-1;
	\end{equation}
	\item Bound for the second derivatives:
	\begin{equation} \label{eq:spline-diff-bound}
	|(N_{i}^p(x))''|\le 4p(p-1)n^2, \quad x\in(0,1).
	\end{equation}
\end{itemize}
We also add a property concerning fractional derivatives, which follows from \eqref{eq:relRLC} and \eqref{eq:spline-diff}--\eqref{eq:spline-boundary}.
\begin{itemize}
	\item The Riemann-Liouville and the Caputo derivatives of interior B-splines coincide:
	\begin{equation}\label{eq:spline-frac}
	\begin{array}{c}
	\Ddlname{\alpha}{RL}N^p_{i}=\Ddlname{\alpha}{C}N^p_{i}\\
	\Ddrname{\alpha}{RL}N^p_{i}=\Ddrname{\alpha}{C}N^p_{i}
	\end{array},
	\quad i=m+1,\ldots,n+p-m.
	\end{equation}
\end{itemize}

From now onwards, we will denote the left and right Riemann-Liouville derivatives 
simply by $\Ddl{\alpha}$ and $\Ddr{\alpha}$. In view of the last B-spline property, these also stand for the left and right Caputo derivatives in case of interior B-splines.

The B-splines $N_i^p$, $i=p+1,\ldots,n$, are uniformly shifted and scaled versions of a single shape function, the so-called cardinal B-spline $\phi_p:\RR\rightarrow \RR$,
\begin{equation}\label{eq:phi_0}
\phi_0(t) := \begin{cases}
1, & t \in [0, 1), \\
0, & \text{otherwise},
\end{cases}
\end{equation}
and
\begin{equation}\label{eq:phi_recurrence}
\phi_p (t) := \frac{t}{p} \phi_{p-1}(t) + \frac{p+1-t}{p} \phi_{p-1}(t-1), \quad p \geq 1.
\end{equation}
More precisely, we have
\begin{equation*}
N^{p}_i(x) =\phi_{p}(nx-i+p+1), \quad i=p+1,\ldots,n,
\end{equation*}
and
\begin{equation*}
\left(N^{p}_i(x)\right)' =n\phi'_{p}(nx-i+p+1), \quad i=p+1,\ldots,n.
\end{equation*}
The cardinal B-spline $\phi_p$ belongs to $\mathcal{C}^{p-1}(\RR)$ and is supported on the interval $[0,p+1]$. It is a symmetric function with respect to $\frac{p+1}{2}$, the midpoint of its support.
The left Caputo derivative of $\phi_p$ has the following explicit expression (see \cite{Pitolli}):
\begin{equation}\label{eq:cardinal_central}
\D{0}{}{t}{\alpha}\phi_p(t)=\frac{1}{\Gamma(p-\alpha+1)}\sum_{j=0}^{p+1}(-1)^j\binom{p+1}{j}(t-j)^{p-\alpha}_+, \quad 0\leq\alpha<p,
\end{equation}
where $(\cdot)^q_+$ is the truncated power function of degree $q$. Note that the function in \eqref{eq:cardinal_central} is a fractional spline, i.e., a spline with fractional degree \cite{siamREV}.
For other common properties of cardinal B-splines, we refer the reader to \cite[Section~3.1]{GMPSS}.

\section{Fractional derivatives of cardinal B-splines} \label{sec:frac-cardinal}

The aim of this section is to write the fractional derivative of a cardinal B-spline as the inner product of fractional derivatives of cardinal B-splines (see Theorem~\ref{thm:inner-product-mix}). This result will be used in Section~\ref{sec:symbol-properties} to derive an explicit expression of the symbol of the coefficient matrices of interest.

All the results in this section refer to fractional derivatives on the half-axes. More precisely, for a given compactly supported function $u$ with absolutely continuous $(m-1)$-th derivative on $\RR$, we consider
\begin{align}\label{eq:fractional-infty}
\begin{split}
\Dl{\alpha}u(x)&:=\frac{1}{\Gamma(m-\alpha)}\frac{{\dd}^m}{\dd x^m}\int_{-\infty}^x(x-y)^{m-\alpha-1}u(y)\,\dd y,\\
\Dr{\alpha}u(x)&:=\frac{(-1)^m}{\Gamma(m-\alpha)}\frac{{\dd}^m}{\dd x^m}\int_{x}^{+\infty}(y-x)^{m-\alpha-1}u(y)\,\dd y,
\end{split}
\end{align}
with $m$ the integer such that $m-1\leq\alpha<m$. For functions $u$ that are solutions of problem \eqref{eq:FDE} and $m=2$, these derivatives reduce to $\Ddl{\alpha}u(x)$ and $\Ddr{\alpha}u(x)$ since the adopted boundary conditions ensure $u$ to be identically zero on $\RR\backslash(0,1)$.

Let $\fourier{f}$ denote the Fourier transform of $f\in L_2(\RR)$, i.e.,
\begin{equation*}
\widehat{f}(\theta):=\int_\RR f(x) \e^{-\i\,\theta x}\,\dd x.
\end{equation*}
We start with a lemma addressing the Fourier transform of the derivatives in \eqref{eq:fractional-infty} for cardinal B-splines.

\begin{lemma}\label{lem:fractional-fourier}
	Let $\phi_{p}$ be the cardinal B-spline as defined in (\ref{eq:phi_0})--(\ref{eq:phi_recurrence}).  Then, for $0\leq\alpha<p$ we have
	\begin{align} \label{eq:fractional-l-fourier}
	\fourier{{\Dl{\alpha}\phi_{p}}}(\theta)=(\i\theta)^\alpha\left(\frac{1-\e^{-\i\theta}}{\i\theta}\right)^{p+1},
	\end{align}
	and
	\begin{align} \label{eq:fractional-r-fourier}
	\fourier{{\Dr{\alpha}\phi_{p}}}(\theta)=(-\i\theta)^\alpha\left(\frac{1-\e^{-\i\theta}}{\i\theta}\right)^{p+1}.
	\end{align}
\end{lemma}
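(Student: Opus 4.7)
The plan is to use the convolutional form of the Riemann--Liouville derivative on the line together with the known Fourier transform of the cardinal B-spline. Set $K_\beta(x):=x_+^{\beta-1}/\Gamma(\beta)$ for $\beta>0$; since $\phi_p$ is compactly supported and of class $\mathcal{C}^{p-1}$, one reads from \eqref{eq:fractional-infty}, with $m=\ceil{\alpha}$, that
\[
\Dl{\alpha}\phi_p(x)=\frac{d^m}{dx^m}\bigl(K_{m-\alpha}*\phi_p\bigr)(x).
\]
The convolution $K_{m-\alpha}*\phi_p$ is continuous and decays like $x^{\alpha-m-1}$ as $x\to+\infty$, so $\Dl{\alpha}\phi_p\in L^1(\RR)$ and its Fourier transform is classical. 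Combining the tempered-distribution identity $\widehat{K_\beta}(\theta)=(\i\theta)^{-\beta}$ (principal branch) with the convolution theorem, the differentiation rule $\widehat{f^{(m)}}(\theta)=(\i\theta)^m\widehat{f}(\theta)$, and the standard formula
\[
\fourier{\phi_p}(\theta)=\left(\frac{1-\e^{-\i\theta}}{\i\theta}\right)^{p+1}
\]
(which follows by iterating $\phi_p=\phi_0*\phi_{p-1}$ from $\fourier{\phi_0}(\theta)=(1-\e^{-\i\theta})/(\i\theta)$), I obtain
\[
\fourier{\Dl{\alpha}\phi_p}(\theta)=(\i\theta)^m\,\widehat{K_{m-\alpha}}(\theta)\,\fourier{\phi_p}(\theta)=(\i\theta)^{\alpha}\fourier{\phi_p}(\theta),
\]
which is \eqref{eq:fractional-l-fourier}.

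For \eqref{eq:fractional-r-fourier} I would avoid redoing the calculation by exploiting the reflection identity $\Dr{\alpha}u(x)=\Dl{\alpha}\check u(-x)$, with $\check u(x):=u(-x)$, which is a one-line change of variable in \eqref{eq:fractional-infty}. Applied to $u=\phi_p$ (the function $\check\phi_p$ is again compactly supported and $\mathcal{C}^{p-1}$, so the argument above applies to it verbatim) and combined with $\widehat{\check\phi_p}(\theta)=\fourier{\phi_p}(-\theta)$ and $\widehat{\Dr{\alpha}\phi_p}(\theta)=\widehat{\Dl{\alpha}\check\phi_p}(-\theta)$, this yields \eqref{eq:fractional-r-fourier} at once.

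The main technical obstacle is the rigorous justification of $\widehat{K_\beta}(\theta)=(\i\theta)^{-\beta}$, since $K_\beta\notin L^1(\RR)$ when $\beta\le 1$, so the identity must be interpreted in $\mathcal{S}'(\RR)$ or derived as a controlled limit of the regularized kernels $K_\beta(x)\e^{-\varepsilon x}$ as $\varepsilon\to 0^+$; both routes require some care in tracking the chosen branch of $z^{-\beta}$. An alternative that bypasses distribution theory altogether is to substitute the closed-form expression \eqref{eq:cardinal_central}---which equals $\Dl{\alpha}\phi_p$ because $\phi_p$ vanishes on $(-\infty,0)$---and compute the Fourier transform as a finite binomial sum of transforms of terms $(t-j)_+^{p-\alpha}$, each evaluated via the same exponential regularization, summing to the desired right-hand side.
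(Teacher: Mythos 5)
Your argument is correct and follows essentially the same route as the paper: the paper's proof simply cites the multiplier identities $\fourier{{\Dl{\alpha}f}}(\theta)=(\i\theta)^\alpha\fourier{f}(\theta)$ and $\fourier{{\Dr{\alpha}f}}(\theta)=(-\i\theta)^\alpha\fourier{f}(\theta)$ from Podlubny together with the classical formula for $\fourier{\phi_{p}}$ and multiplies them, whereas you additionally derive those multiplier identities from the convolution structure of the Riemann--Liouville kernel (plus a reflection argument for the right-sided case), which is a legitimate filling-in of the cited fact. One cosmetic slip: the convolution $K_{m-\alpha}\ast\phi_p$ decays like $x^{m-\alpha-1}$ (not $x^{\alpha-m-1}$) as $x\to+\infty$; it is its $m$-th derivative $\Dl{\alpha}\phi_p$ that decays like $x^{-\alpha-1}$ and hence lies in $L^1(\RR)$, which is what your argument actually needs.
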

\begin{proof}
	From \cite{Podlubny} we know that
	\begin{equation*}
	\fourier{{\Dl{\alpha}f}}(\theta)=(\i\theta)^\alpha\fourier{f}(\theta), \quad
	\fourier{{\Dr{\alpha}f}}(\theta)=(-\i\theta)^\alpha\fourier{f}(\theta),
	\end{equation*}
	and from \cite{Chui,LycheMS2018},
	\begin{equation*}
	\fourier{\phi_{p}}(\theta)
	=\left(\frac{1-\e^{-\i\theta}}{\i\theta}\right)^{p+1}.
	\end{equation*}
	Combining these results immediately gives \eqref{eq:fractional-l-fourier} and \eqref{eq:fractional-r-fourier}.
\end{proof}
In the following $\alpha_1,\alpha_2$ stand for real numbers.

\begin{lemma}\label{lem:im-power-mix}
	Let $\conj{z}$ denote the conjugate of the complex number $z$. Then, for any real number $\theta$ we have
	\begin{equation*}
	(\i\theta)^{\alpha_1}\conj{(-\i\theta)^{\alpha_2}}=(\i\theta)^{\alpha_1+\alpha_2}.
	\end{equation*}
\end{lemma}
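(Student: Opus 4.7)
The plan is to reduce the identity to elementary manipulations in polar form on the principal branch of the complex power. First I would split into the two relevant cases $\theta>0$ and $\theta<0$ (the case $\theta=0$ being trivial when $\alpha_1+\alpha_2>0$, and a matter of convention otherwise), since the sign of $\theta$ determines the argument of $\i\theta$ and $-\i\theta$.

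For $\theta>0$, I would write $\i\theta=\theta\,\e^{\i\pi/2}$ and $-\i\theta=\theta\,\e^{-\i\pi/2}$, so that by the principal branch
\begin{equation*}
(\i\theta)^{\alpha_1}=\theta^{\alpha_1}\e^{\i\alpha_1\pi/2},\qquad
(-\i\theta)^{\alpha_2}=\theta^{\alpha_2}\e^{-\i\alpha_2\pi/2}.
\end{equation*}
Taking the complex conjugate of the second expression gives $\theta^{\alpha_2}\e^{\i\alpha_2\pi/2}$, and multiplying by the first yields
\begin{equation*}
\theta^{\alpha_1+\alpha_2}\e^{\i(\alpha_1+\alpha_2)\pi/2}=(\i\theta)^{\alpha_1+\alpha_2}.
\end{equation*}
For $\theta<0$ the same computation runs with $\theta$ replaced by $|\theta|$ and the signs of the exponentials flipped, producing $|\theta|^{\alpha_1+\alpha_2}\e^{-\i(\alpha_1+\alpha_2)\pi/2}$, which is exactly $(\i\theta)^{\alpha_1+\alpha_2}$ on the principal branch since $\i\theta=|\theta|\e^{-\i\pi/2}$.

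A more compact way to present the same argument is to observe that, since neither $\i\theta$ nor $-\i\theta$ lies on the negative real axis for $\theta\neq 0$, the principal branch satisfies $\conj{w^{\alpha}}=(\conj w)^{\alpha}$, and in our setting $\conj{-\i\theta}=\i\theta$. Hence $\conj{(-\i\theta)^{\alpha_2}}=(\i\theta)^{\alpha_2}$, and the claim follows at once from the standard exponent addition rule $(\i\theta)^{\alpha_1}(\i\theta)^{\alpha_2}=(\i\theta)^{\alpha_1+\alpha_2}$, which is valid because both factors share the same branch choice. There is no real obstacle here; the only subtlety worth flagging is the branch convention, and making that choice explicit is what makes the identity rigorous.
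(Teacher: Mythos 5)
Your argument is correct and follows essentially the same route as the paper: both write $(\i\theta)^{\alpha}$ in polar form as $|\theta|^{\alpha}\e^{\i\frac{\pi}{2}\sign(\theta)\alpha}$, conjugate, and multiply; the paper merely handles both signs of $\theta$ at once via $\sign(\theta)$ rather than splitting into cases. Your closing observation that $\conj{(-\i\theta)^{\alpha_2}}=(\i\theta)^{\alpha_2}$ on the principal branch is a nice compact restatement of the same computation.
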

\begin{proof}
	Let us consider the polar form of the complex number $(\i\theta)^\alpha$, i.e.,
	\begin{equation*}
	(\i\theta)^\alpha=|\theta|^\alpha\e^{\i\frac{\pi}{2}\sign(\theta)\alpha}.
	\end{equation*}
	Hence,
	\begin{align*}
	(\i\theta)^{\alpha_1}\conj{(-\i\theta)^{\alpha_2}}
	&= |\theta|^{\alpha_1}\e^{\i\frac{\pi}{2}\sign(\theta)\alpha_1}|\theta|^{\alpha_2}\e^{\i\frac{\pi}{2}\sign(\theta)\alpha_2}
	=|\theta|^{\alpha_1+\alpha_2}\e^{\i\frac{\pi}{2}\sign(\theta)(\alpha_1+\alpha_2)},
	\end{align*}
	which completes the proof.
\end{proof}

We are now ready for the main result of this section. 
\begin{theorem} \label{thm:inner-product-mix}
	Let $\phi_{p}$ be the cardinal B-spline as defined in (\ref{eq:phi_0})--(\ref{eq:phi_recurrence}). Then, for $0\leq\alpha_1<p_1$ and $0\leq\alpha_2<p_2$ we have
	\begin{align} 
	\int_\RR \Dl{\alpha_1}\phi_{p_1}(x)\,{\Dr{\alpha_2}\phi_{p_2}(x+k)}\,\dd x
	&= \Dl{\alpha_1+\alpha_2}\phi_{p_1+p_2+1}(p_2+1-k), 
	\label{eq:inner-product-mix} \\
	\int_\RR \Dr{\alpha_1}\phi_{p_1}(x)\,{\Dl{\alpha_2}\phi_{p_2}(x+k)}\,\dd x
	&= \Dr{\alpha_1+\alpha_2}\phi_{p_1+p_2+1}(p_2+1-k). 
	\label{eq:inner-product-mix2}
	\end{align}
\end{theorem}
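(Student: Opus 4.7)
The plan is to analyze the integral by Fourier methods: convert the inner product via Parseval's identity, assemble the resulting Fourier factors using Lemmas~\ref{lem:fractional-fourier} and \ref{lem:im-power-mix}, and recognize what remains as the inverse Fourier transform of a single translated fractional derivative of $\phi_{p_1+p_2+1}$ evaluated at the point $p_2+1-k$.

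For \eqref{eq:inner-product-mix}, I would set $f:=\Dl{\alpha_1}\phi_{p_1}$ and $g:=\Dr{\alpha_2}\phi_{p_2}$. From the explicit representation \eqref{eq:cardinal_central} one reads off that both $f$ and $g$ vanish on a half-line and decay like $|x|^{-\alpha_i-1}$ on the other (the sum there is a finite difference of $x^{p_i-\alpha_i}$ of order $p_i+1$), so $f,g\in L^1(\RR)\cap L^2(\RR)$ and Parseval's identity applies. Since the Fourier transform of $g(\cdot+k)$ is $\e^{\i k\theta}\fourier{g}(\theta)$, I rewrite
\[
\int_\RR f(x)\,g(x+k)\,\dd x=\frac{1}{2\pi}\int_\RR \fourier{f}(\theta)\,\conj{\fourier{g}(\theta)}\,\e^{-\i k\theta}\,\dd\theta.
\]
Then I would substitute the formulas of Lemma~\ref{lem:fractional-fourier}. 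Lemma~\ref{lem:im-power-mix} collapses the scalar part into $(\i\theta)^{\alpha_1+\alpha_2}$, while the elementary identity $\conj{(1-\e^{-\i\theta})/(\i\theta)}=\e^{\i\theta}\,(1-\e^{-\i\theta})/(\i\theta)$ lets the B-spline factors merge into $((1-\e^{-\i\theta})/(\i\theta))^{p_1+p_2+2}$ at the cost of an overall phase $\e^{\i(p_2+1)\theta}$. A second application of Lemma~\ref{lem:fractional-fourier} identifies the product as $\e^{\i(p_2+1)\theta}\fourier{\Dl{\alpha_1+\alpha_2}\phi_{p_1+p_2+1}}(\theta)$, and combining the leftover $\e^{-\i k\theta}$ with Fourier inversion produces precisely $\Dl{\alpha_1+\alpha_2}\phi_{p_1+p_2+1}(p_2+1-k)$, which is the desired right-hand side.

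The second identity \eqref{eq:inner-product-mix2} follows by exactly the same chain of steps with the roles of $\Dl{\cdot}$ and $\Dr{\cdot}$ exchanged: conjugating the statement of Lemma~\ref{lem:im-power-mix} gives $\conj{(\i\theta)^{\alpha_2}}(-\i\theta)^{\alpha_1}=(-\i\theta)^{\alpha_1+\alpha_2}$, so the scalar factor in front of the B-spline part becomes $(-\i\theta)^{\alpha_1+\alpha_2}$ and Fourier inversion yields $\Dr{\alpha_1+\alpha_2}\phi_{p_1+p_2+1}(p_2+1-k)$. I do not anticipate any deep obstacle: once the two preparatory lemmas are in hand, the proof is careful bookkeeping of complex phases, and the $\e^{\i(p_2+1)\theta}$ shift in the Fourier domain is exactly what generates the evaluation shift by $p_2+1$ on the right-hand side; the only mildly technical point is the $L^1\cap L^2$ justification of Parseval for these non-compactly-supported derivatives, which is settled by the decay estimate above.
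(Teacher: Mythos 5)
Your proposal is correct and follows essentially the same route as the paper: Parseval plus the translation property, the Fourier formulas of Lemma~\ref{lem:fractional-fourier}, Lemma~\ref{lem:im-power-mix} to merge the scalar factors, and the phase identity $\conj{(1-\e^{-\i\theta})/(\i\theta)}=\e^{\i\theta}(1-\e^{-\i\theta})/(\i\theta)$ to produce the $\e^{\i(p_2+1-k)\theta}$ shift before Fourier inversion. Your added $L^1\cap L^2$ decay justification for applying Parseval is a small refinement the paper leaves implicit, but it does not change the argument.
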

\begin{proof}
	We first recall the Parseval identity for Fourier transforms, i.e.,
	\begin{equation*} 
	\int_\RR \varphi(x)\conj{\psi(x)}\,\dd x = \frac{1}{2\pi} \int_\RR \fourier{\varphi}(\theta)\conj{\fourier{\psi}(\theta)}\,\dd \theta,\quad\varphi,\,\psi\in L_2(\RR),
	\end{equation*}
	and the translation property of the Fourier transform, i.e.,
	\begin{equation*} 
	\fourier{\psi(\cdot+k)}(\theta) = \fourier{\psi(\theta)}\,\e^{\i k\theta},\quad\psi\in L_1(\RR),\ k\in\RR.
	\end{equation*}
	Starting from the above equalities, and using Lemmas~\ref{lem:fractional-fourier} and \ref{lem:im-power-mix}, we get
	\begin{align*}
	&\int_\RR \Dl{\alpha_1}\phi_{p_1}(x)\,{\Dr{\alpha_2}\phi_{p_2}(x+k)}\,\dd x \\
	&\quad=\frac{1}{2\pi}\int_\RR \fourier{\Dl{\alpha_1} \phi_{p_1}}(\theta)\,\conj{\fourier{\Dr{\alpha_2}\phi_{p_2}}(\theta)\e^{\i k\theta}}\,\dd \theta \\
	&\quad=\frac{1}{2\pi}\int_\RR (\i\theta)^{\alpha_1+\alpha_2}\left(\frac{1-\e^{-\i\theta}}{\i\theta}\right)^{p_1+1}\left(\frac{\e^{\i\theta}-1}{\i\theta}\right)^{p_2+1}\e^{-\i k\theta}\,\dd \theta \\
	&\quad=\frac{1}{2\pi}\int_\RR (\i\theta)^{\alpha_1+\alpha_2}\left(\frac{1-\e^{-\i\theta}}{\i\theta}\right)^{p_1+p_2+2}\e^{\i(p_2+1-k)\theta}\,\dd \theta.
	\end{align*}
	By taking the inverse Fourier transform of the right-hand side we arrive at~\eqref{eq:inner-product-mix}. The proof of \eqref{eq:inner-product-mix2} is analogous.
\end{proof}

\begin{remark}
	Theorem~\ref{thm:inner-product-mix} is a generalization of a known explicit formula for inner products of integer derivatives of cardinal B-splines (see \cite{GMPSS,LycheMS2018} and also \cite{Speleers2015}).
\end{remark}

\section{IgA collocation discretization of the fractional Riesz operator} \label{sec:iga}
From now onwards, we assume that $\alpha$ is fixed in the open interval $(1,2)$.
Let $\mathcal{W}$ be a finite dimensional vector space of sufficiently smooth functions defined on the closure of $\Omega$ and vanishing at its boundary, and let $N:={\rm dim}(\mathcal{W})$. Applying the collocation method to \eqref{eq:FDE} means looking for a function $u_{\mathcal{W}}\in \mathcal{W}$ such that
\begin{equation}\label{eq:colloc-system}
\frac{\dd^\alpha u_{\mathcal{W}}(x_i)}{\dd|x|^\alpha}=s(x_i), \quad i=1,\ldots,N,
\end{equation}
with $x_i\in\Omega$, the so-called collocation points. Given a basis $\{\varphi_j : j=1,\ldots,N\}$ of $\mathcal{W}$, problem \eqref{eq:colloc-system} can be rewritten in matrix form as follows:
\begin{equation*}
A_{\rm col}{\bf u}={\bf b}_{\rm col},
\end{equation*}
with 
\begin{equation*}
A_{\rm col}:=\left[\frac{1}{2\cos({\pi\alpha}/{2})}(\Ddl{\alpha}+\Ddr{\alpha})\varphi_j(x_i)\right]_{i,j=1}^{N}, \quad {\bf b}_{\rm col}:=[s(x_i)]_{i=1}^N,
\end{equation*}
and ${\bf u}:=[u_1,\ldots,u_N]^T$ such that  $u_{\mathcal{W}}(x)=\sum_{j=1}^{N}u_j\varphi_j(x)$. In this paper, we choose $\mathcal{W}$ as the space of splines of degree $p\geq2$ that vanish at the boundary, and the collocation points as the Greville abscissae. More precisely, we take
\begin{itemize}
	\item the approximation space as the space spanned by the B-splines of degree $p\geq2$ that are zero at the boundary (see \eqref{eq:spline-boundary}), i.e., 
	\begin{equation}\label{eq:spline-space}
	\mathbb{S}^{p}_{n}:={\rm span}\{N^p_i : i=2,\ldots,n+p-1\};
	\end{equation}
	\item the collocation points as the Greville abscissae corresponding to the B-splines in \eqref{eq:spline-space}, i.e.,
	\begin{equation*}
	\eta_i:=\frac{\xi_{i+1}+\cdots+\xi_{i+p}}{p}, \quad i=2,\ldots,n+p-1.
	\end{equation*}	
\end{itemize}
Thus \eqref{eq:colloc-system} translates in the following linear system
\begin{equation*}
A^{p,\alpha}_n{\bf u}_n={\bf b}_n,
\end{equation*}
where 
\begin{equation*}
A^{p,\alpha}_n:=\frac{1}{2\cos({\pi\alpha}/{2})}(A_n^L+A_n^R), \quad {\bf b}_n:=[s(\eta_{i+1})]_{i=1}^{n+p-2},
\end{equation*}
with
\begin{equation}\label{eq:ALR}
A_n^{L}:=\left[\Ddl{\alpha}N^p_{j+1}(\eta_{i+1})\right]_{i,j=1}^{n+p-2}, \quad A_n^R:=\left[\Ddr{\alpha}N^p_{j+1}(\eta_{i+1})\right]_{i,j=1}^{n+p-2},
\end{equation}
and ${\bf u}_n:=[u_1,\dots,u_{n+p-2}]^T$, the vector of the coefficients of $u$ with respect to the B-spline basis functions in the space $\mathbb{S}_n^p$.

In order to assemble the matrices $A_n^L$ and $A_n^R$, we need to compute the left and right fractional derivatives of any B-spline. By using \eqref{eq:cardinal_central}, for the B-splines $N_{i}^p$ corresponding to the indexes $i=p+1,\ldots,n$, we have
\begin{align*}
\Ddl{\alpha}N_i^p(x)&=n^\alpha \D{0}{}{nx}{\alpha}\phi_p(nx-i+p+1)\\
&=\frac{n^\alpha}{\Gamma(p-\alpha+1)}\sum_{j=0}^{p+1}(-1)^j\binom{p+1}{j}\left(nx-i+p+1-j\right)^{p-\alpha}_+.
\end{align*}
Thanks to this relation, and recalling that the Greville abscissae for $i=p+1,\ldots,n$ reduce to
\begin{equation*}
\eta_i=\frac{i}{n}-\frac{p+1}{2n}, \quad i=p+1,\ldots,n,
\end{equation*}
or equivalently,
\begin{equation*}
n\eta_i+p+1=i+\frac{p+1}{2}, \quad i=p+1,\ldots,n,
\end{equation*}
we can immediately recognize that the central part of the matrix $A_n^L$ corresponding to the indexes $p+1,\ldots,n$ has a Toeplitz structure. In other words, we have
\[A_n^L=n^\alpha (T^L_n+R^L_n),\]
where
\[T^L_n:=\left[ \D{0}{}{nx}{\alpha}\phi_p\left(\frac{p+1}{2}+i-j\right)\right]_{i,j=1}^{n+p-2},\]
and $R^L_n$ is a matrix whose rank is bounded by $4(p-1)$. 
A similar reasoning can be applied to the matrix $A_n^R$, and we have
\begin{equation*}
A_n^R=n^\alpha (T^R_n+R^R_n),
\end{equation*}
where 
\begin{equation*}
T^R_n:=\left[ \D{nx}{}{n}{\alpha}\phi_p\left(\frac{p+1}{2}+i-j\right)\right]_{i,j=1}^{n+p-2},
\end{equation*}
and $R^R_n$ is a matrix whose rank is bounded by $4(p-1)$. 
As a consequence, the coefficient matrix $A_n^{p,\alpha}$ inherits the Toeplitz plus rank correction structure and can be written as follows:
\begin{equation}\label{eq:coeff_matrix}
A_n^{p,\alpha}=\frac{1}{2\cos({\pi\alpha}/{2})}(A_{n}^L+A_n^R)=n^\alpha (T_n^{p,\alpha}+R_n^{p,\alpha}),
\end{equation}
with 
\begin{equation}\label{eq:toeplitz-part}
T_n^{p,\alpha}:=\frac{1}{2\cos({\pi\alpha}/{2})}(T^L_n+T^R_n), \quad
R_n^{p,\alpha}:=\frac{1}{2\cos({\pi\alpha}/{2})}(R^L_n+R^R_n).
\end{equation}

In Section~\ref{sec:symbol} we will show that the symbol of $\{n^{-\alpha}A_n^{p,\alpha}\}_n$ coincides with the symbol of $\{T_n^{p,\alpha}\}_n$, denoted by $f^{p,\alpha}$, but first we discuss some properties of this function in the next section.

\section{Properties of the function $f^{p,\alpha}$}\label{sec:symbol-properties}
We start with a theorem that provides an explicit expression of the generating function $f^{p,\alpha}$ of the Toeplitz matrix $T_n^{p,\alpha}$, and whose proof uses the results obtained in Section~\ref{sec:frac-cardinal}.
\begin{theorem}\label{thm:symbol}
	Let $T_n^{p,\alpha}$ be defined as in \eqref{eq:toeplitz-part}. Then, $T_n^{p,\alpha}=T_{n+p-2}(f^{p,\alpha})$ with
	\begin{equation}
	\label{eq:frac-symbol}
	f^{p,\alpha}(\theta) = \sum_{l\in\ZZ} |\theta+2l\pi|^{\alpha} \left(\frac{\sin(\theta/2+l\pi)}{\theta/2+l\pi}\right)^{p+1}.
	\end{equation}
\end{theorem}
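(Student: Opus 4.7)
My plan is to prove $T_n^{p,\alpha}=T_{n+p-2}(f^{p,\alpha})$ by verifying directly that the Toeplitz entries on the left coincide with the Fourier coefficients (\ref{eq:toeplitz-coeff}) of $f^{p,\alpha}$ on the right. Since $\phi_p$ is compactly supported on $[0,p+1]$, the RL operators $\D{0}{}{x}{\alpha}$ and $\D{x}{}{p+1}{\alpha}$ restricted to its support agree with the half-line derivatives $\Dl{\alpha}$ and $\Dr{\alpha}$. Combining this with the discussion preceding (\ref{eq:toeplitz-part}), the $(i,j)$-entry of $T_n^{p,\alpha}$ depends only on $k=i-j$ and equals
$$
t_k := \frac{1}{2\cos(\pi\alpha/2)}\bigl(\Dl{\alpha}\phi_p+\Dr{\alpha}\phi_p\bigr)\bigl(\tfrac{p+1}{2}+k\bigr),
$$
which confirms the Toeplitz structure and reduces the theorem to the identity $t_k = f^{p,\alpha}_k$.

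Next I would take the Fourier transform of $\Dl{\alpha}\phi_p+\Dr{\alpha}\phi_p$ using Lemma~\ref{lem:fractional-fourier}. The polar-form computation in the proof of Lemma~\ref{lem:im-power-mix} also gives the sum identity $(\i\theta)^\alpha+(-\i\theta)^\alpha=2|\theta|^\alpha\cos(\pi\alpha/2)$, and factoring $\frac{1-\e^{-\i\theta}}{\i\theta}=\e^{-\i\theta/2}\frac{\sin(\theta/2)}{\theta/2}$ leads to
$$
\fourier{\Dl{\alpha}\phi_p+\Dr{\alpha}\phi_p}(\theta)
=2\cos(\pi\alpha/2)\,|\theta|^\alpha\,\e^{-\i\theta(p+1)/2}\left(\frac{\sin(\theta/2)}{\theta/2}\right)^{p+1}.
$$
Applying Fourier inversion at the point $\tfrac{p+1}{2}+k$ cancels the phase factor $\e^{-\i\theta(p+1)/2}$, giving the integral representation
$$
t_k=\frac{1}{2\pi}\int_\RR |\theta|^\alpha\left(\frac{\sin(\theta/2)}{\theta/2}\right)^{p+1}\e^{\i k\theta}\,\dd\theta.
$$

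Finally, I would periodize this integral by splitting $\RR=\bigcup_{l\in\ZZ}[(2l-1)\pi,(2l+1)\pi]$, substituting $\theta=\mu+2l\pi$ on the $l$-th piece, using $\e^{\i k\cdot2l\pi}=1$ for $k\in\ZZ$, and swapping the sum and integral to obtain
$$
t_k=\frac{1}{2\pi}\int_{-\pi}^{\pi}\biggl[\sum_{l\in\ZZ}|\mu+2l\pi|^\alpha\left(\frac{\sin(\mu/2+l\pi)}{\mu/2+l\pi}\right)^{p+1}\biggr]\e^{\i k\mu}\,\dd\mu
=\frac{1}{2\pi}\int_{-\pi}^{\pi} f^{p,\alpha}(\mu)\,\e^{\i k\mu}\,\dd\mu.
$$
An elementary reindexing $l\mapsto -l$ in the defining series of $f^{p,\alpha}$ shows that $f^{p,\alpha}$ is even, so the integral above agrees with the Fourier coefficient $f^{p,\alpha}_k$ of (\ref{eq:toeplitz-coeff}), completing the identification $t_k=f^{p,\alpha}_k$.

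The main technical obstacle is justifying the periodization: it hinges on absolute integrability of $|\theta|^\alpha(\sin(\theta/2)/(\theta/2))^{p+1}$ on $\RR$, which holds because the integrand decays like $|\theta|^{\alpha-p-1}$ with $p+1-\alpha>1$ under the standing assumption $p\ge2$ and $\alpha\in(1,2)$; the same decay also ensures that $\Dl{\alpha}\phi_p+\Dr{\alpha}\phi_p$ lies in $L^1(\RR)\cap L^2(\RR)$, so that the Fourier inversion step is legitimate in the classical pointwise sense rather than merely distributionally.
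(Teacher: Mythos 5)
Your argument is correct, and the technical points you flag (integrability of the transform, validity of pointwise inversion, Fubini for the periodization) are exactly the ones that need justifying; your justifications hold since $p\ge 2>\alpha$ gives $p+1-\alpha>1$. The proof differs from the paper's in one structural respect: to pass from the Toeplitz entry $t_k$ to the integral $\frac{1}{2\pi}\int_\RR |\theta|^{\alpha}\left(\frac{\sin(\theta/2)}{\theta/2}\right)^{p+1}\e^{\i k\theta}\,\dd\theta$, you apply Fourier inversion directly to $\Dl{\alpha}\phi_p+\Dr{\alpha}\phi_p$, whose transform is given by Lemma~\ref{lem:fractional-fourier}, whereas the paper routes through Theorem~\ref{thm:inner-product-mix}, rewriting the entry as an inner product of fractional derivatives of two lower-degree cardinal B-splines $\phi_{p_1},\phi_{p_2}$ with $p_1+p_2+1=p$ and invoking Parseval. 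That detour forces the paper to treat odd $p$ (where $p_1=p_2=q$) and even $p$ (where $p_1=q-1$, $p_2=q$ and a half-integer shift appears) separately; your inversion argument is uniform in $p$ and eliminates the case split, at the cost of the (correctly supplied) inversion hypotheses. The remaining ingredients --- the identity $(\i\theta)^{\alpha}+(-\i\theta)^{\alpha}=2|\theta|^{\alpha}\cos(\pi\alpha/2)$ from the polar form in Lemma~\ref{lem:im-power-mix}, the periodization over the intervals $[(2l-1)\pi,(2l+1)\pi]$, and the evenness of $f^{p,\alpha}$ reconciling your $\e^{\i k\mu}$ with the convention $\e^{-\i k\mu}$ of \eqref{eq:toeplitz-coeff} --- coincide with the paper's. (The paper evaluates the entry at $\frac{p+1}{2}-k$ rather than your $\frac{p+1}{2}+k$; the two agree because $\phi_p$ is symmetric about the midpoint of its support, so this is immaterial.) What the paper's longer route buys is Theorem~\ref{thm:inner-product-mix} itself, advertised as a result of independent interest; what yours buys is a shorter, case-free derivation of the symbol.
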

\begin{proof}
	From its construction it is clear that $T_n^{p,\alpha}$ is a Toeplitz matrix of dimension $n+p-2$.
	According to the definition in \eqref{eq:toeplitz}, the entries $f_k$ of this matrix are given by
	\begin{align*}
	f_k &= \frac{1}{2\cos(\pi \alpha/2)} \left(\Dl{\alpha}\phi_{p}\left(\frac{p+1}{2}-k\right)+\Dr{\alpha}\phi_{p}\left(\frac{p+1}{2}-k\right)\right).
	\end{align*}
	
	We differentiate the cases of odd and even degree $p$. We start by proving the expression \eqref{eq:frac-symbol} of the generating function $f^{p,\alpha}$ for $p = 2q+1$. Using Theorem~\ref{thm:inner-product-mix} (and its proof) with $\alpha=\alpha_1+\alpha_2$ and $q=p_1=p_2$, we have
	\begin{align*}
	2\cos(\pi \alpha/2) f_k &= \Dl{\alpha}\phi_{2q+1}\left(q+1-k\right)+\Dr{\alpha}\phi_{2q+1}\left(q+1-k\right) \\
	&= \frac{1}{2\pi}\int_\RR \left[(\i\theta)^{\alpha}+(-\i\theta)^{\alpha}\right]\left(\frac{1-\e^{-\i\theta}}{\i\theta}\right)^{q+1}\left(\frac{\e^{\i\theta}-1}{\i\theta}\right)^{q+1}\e^{-\i k\theta}\,\dd \theta \\
	&= \frac{1}{2\pi}\int_\RR 2|\theta|^{\alpha}\cos(\pi \alpha/2) \left|\frac{1-\e^{-\i\theta}}{\theta}\right|^{2q+2}\e^{-\i k\theta}\,\dd \theta.
	\end{align*}
	Set
	$$
	w(\theta):=|\theta|^{\alpha}\left|\frac{1-\e^{-\i\theta}}{\theta}\right|^{2q+2}
	=|\theta|^{\alpha}\left(\frac{\sin(\theta/2)}{\theta/2}\right)^{2q+2}.
	$$
	Then,
	\begin{align*}
	f_k &= \frac{1}{2\pi}\int_\RR w(\theta)\e^{-\i k\theta}\,\dd \theta
	=\sum_{l\in\ZZ}\frac{1}{2\pi}\int_{(2l-1)\pi}^{(2l+1)\pi} w(\theta)\e^{-\i k\theta}\,\dd \theta \\
	&=\sum_{l\in\ZZ}\frac{1}{2\pi}\int_{-\pi}^{\pi} w(\theta+2l\pi)\e^{-\i k\theta}\,\dd \theta
	=\frac{1}{2\pi}\int_{-\pi}^{\pi} \left[\sum_{l\in\ZZ}w(\theta+2l\pi)\right]\e^{-\i k\theta}\,\dd \theta.
	\end{align*}
	The expression \eqref{eq:frac-symbol} of the generating function $f^{p,\alpha}$ follows from \eqref{eq:toeplitz-coeff} for $p = 2q+1$.
	
	Now we consider even degree $p = 2q$. Using again Theorem~\ref{thm:inner-product-mix} (and its proof) with $\alpha=\alpha_1+\alpha_2$ and $q=p_1+1=p_2$, we have
	\begin{align*}
	2\cos(\pi \alpha/2) f_k &= \Dl{\alpha}\phi_{2q}\left(q+1-k-1/2\right)+\Dr{\alpha}\phi_{2q}\left(q+1-k-1/2\right) \\
	&= \frac{1}{2\pi}\int_\RR \left[(\i\theta)^{\alpha}+(-\i\theta)^{\alpha}\right]\left(\frac{1-\e^{-\i\theta}}{\i\theta}\right)^{q}\left(\frac{\e^{\i\theta}-1}{\i\theta}\right)^{q+1}\e^{-\i (k+1/2)\theta}\,\dd \theta \\
	&= \frac{1}{2\pi}\int_\RR 2|\theta|^{\alpha}\cos(\pi \alpha/2) \left|\frac{1-\e^{-\i\theta}}{\theta}\right|^{2q}\left(\frac{\e^{\i\theta/2}-\e^{-\i\theta/2}}{\i\theta}\right)\e^{-\i k\theta}\,\dd \theta.
	\end{align*}
	Here,
	\begin{equation*}
	w(\theta):=|\theta|^{\alpha}\left|\frac{1-\e^{-\i\theta}}{\theta}\right|^{2q}\left(\frac{\e^{\i\theta/2}-\e^{-\i\theta/2}}{\i\theta}\right)
	=|\theta|^{\alpha}\left(\frac{\sin(\theta/2)}{\theta/2}\right)^{2q+1}.
	\end{equation*}
	Following then a similar argument as in the odd degree case, we arrive at the expression \eqref{eq:frac-symbol} of the generating function $f^{p,\alpha}$ for $p = 2q$.
\end{proof}
\begin{remark}
	The proof of Theorem~\ref{thm:symbol} remains valid for $\alpha\in[0,1)\cup(1,2]$.
\end{remark}

Starting from \eqref{eq:frac-symbol} and applying the same line of arguments as in the proofs of \cite[Lemmas~3.4 and~3.6]{DGMSS}, we obtain the following results for $f^{p,\alpha}(\theta)$.
\begin{theorem}\label{thm:bounds-frac-symb}
	Let $f^{p,\alpha}$ be as in \eqref{eq:frac-symbol}. Then, $f^{p,\alpha}(\theta)=f^{p,\alpha}(-\theta)$, and for $p>\alpha $,
	\begin{equation}
	|\theta|^\alpha \left(\frac{\sin(\theta/2)}{\theta/2}\right)^{p+1}\leq f^{p,\alpha}(\theta)\leq|\theta|^\alpha\left(\frac{\sin(\theta/2)}{\theta/2}\right)^{p+1}+
	C_{p,\alpha}\left(\sin(\theta/2)\right)^{p+1},
	\label{eq:bounds-frac-symb}
	\end{equation}
	where $C_{p,\alpha}$ is a constant depending on $p$ and $\alpha$ and $\theta \in [0,\pi]$. Moreover,
	\begin{equation}
	\label{eq:val-pi-frac-symb}
	\frac{f^{p,\alpha}(\pi)}{\max_\theta f^{p,\alpha}(\theta)}\leq 
	\frac{f^{p,\alpha}(\pi)}{f^{p,\alpha}(\pi/2)}\leq 2^{\frac{2\alpha+1-p}{2}}.
	\end{equation}
\end{theorem}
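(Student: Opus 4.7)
The plan is to mimic the arguments used to prove the analogous bounds in \cite[Lemmas~3.4 and~3.6]{DGMSS}, working directly with the explicit series representation \eqref{eq:frac-symbol}. As a preliminary, one checks that the series converges absolutely whenever $p>\alpha$, since the $l$-th summand decays like $|l|^{\alpha-p-1}$ and the exponent is $<-1$; this justifies all term-by-term manipulations below. The symmetry $f^{p,\alpha}(-\theta)=f^{p,\alpha}(\theta)$ is then immediate from the substitution $l\mapsto-l$ in \eqref{eq:frac-symbol}, using that $|\cdot|^\alpha$ is even and $x\mapsto\sin(x)/x$ is even.

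For the two-sided inequality \eqref{eq:bounds-frac-symb} I would isolate the $l=0$ summand, which contributes exactly $|\theta|^\alpha(\sin(\theta/2)/(\theta/2))^{p+1}$, and rewrite the tail via $\sin(\theta/2+l\pi)=(-1)^l\sin(\theta/2)$ so as to factor $\sin^{p+1}(\theta/2)$ out of every term. The upper bound then follows from taking absolute values and applying the crude estimates $|\theta+2l\pi|\leq(2|l|+1)\pi$ and $|\theta/2+l\pi|\geq(|l|-1/2)\pi$, valid for $\theta\in[0,\pi]$ and $|l|\geq 1$; the resulting series in $l$ converges because $p>\alpha$, and its supremum over $\theta\in[0,\pi]$ is the constant $C_{p,\alpha}$. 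For the lower bound one needs the signed tail to be nonnegative. When $p+1$ is even this is obvious since every summand is nonnegative. When $p+1$ is odd I would pair the $l=k$ and $l=-k$ terms ($k\geq 1$); setting $a_k=\theta+2k\pi$ and $b_k=2k\pi-\theta$, a short computation (using $(-b_k/2)^{p+1}=-(b_k/2)^{p+1}$ for odd $p+1$) reduces each pair to $(-1)^{k+1}\,2^{p+1}\sin^{p+1}(\theta/2)\,[b_k^{\alpha-p-1}-a_k^{\alpha-p-1}]$. The bracket is positive (since $b_k<a_k$ and the exponent is negative), it decreases in $k$ (because $y\mapsto(y-\theta)^{\alpha-p-1}-(y+\theta)^{\alpha-p-1}$ is decreasing on $(\theta,\infty)$), and vanishes as $k\to\infty$, so Leibniz's alternating series criterion gives that the total tail is $\geq 0$.

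The ratio inequality splits in two. The first half reduces to the trivial $f^{p,\alpha}(\pi/2)\leq\max_\theta f^{p,\alpha}(\theta)$. For the sharper bound I would substitute $\theta=\pi$ and $\theta=\pi/2$ directly into \eqref{eq:frac-symbol}: $\sin(\theta/2+l\pi)$ becomes $(-1)^l$ and $(-1)^l\sqrt{2}/2$, while $\theta/2+l\pi$ becomes $(2l+1)\pi/2$ and $(4l+1)\pi/4$, respectively. After collecting powers of $2$ and $\pi$, both $f^{p,\alpha}(\pi)$ and $f^{p,\alpha}(\pi/2)$ reduce to explicit sums indexed by odd positive integers. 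For odd $p$ both sums equal $\sum_{k\geq 0}(2k+1)^{-(p+1-\alpha)}$ up to multiplicative factors; the extra factor $2$ coming from the fact that $|2l+1|$ hits each odd positive integer twice as $l$ runs over $\ZZ$ while $|4l+1|$ hits it only once, combined with the remaining powers of $2$, produces the ratio exactly $2^{(2\alpha+1-p)/2}$. For even $p$ one instead obtains $2\beta(p+1-\alpha)$ at $\theta=\pi$ (Dirichlet beta) and a rearranged alternating sum $S_2=\sum_{k\geq 0}(-1)^k[(4k+1)^{-(p+1-\alpha)}+(4k+3)^{-(p+1-\alpha)}]$ at $\theta=\pi/2$; a termwise pairing shows $S_2-\beta(p+1-\alpha)=2\sum_{j\geq 0}[(4j+3)^{-(p+1-\alpha)}-(4j+5)^{-(p+1-\alpha)}]>0$, whence $\beta(p+1-\alpha)/S_2<1$ and the $\leq$ bound follows. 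The principal obstacles are the sign analysis of the tail when $p+1$ is odd and the termwise comparison of the two Dirichlet-type sums for even $p$; both require careful bookkeeping but are otherwise routine.
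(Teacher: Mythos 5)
Your proposal is correct and follows the same route the paper takes: the paper does not spell the proof out but simply invokes the arguments of \cite[Lemmas~3.4 and~3.6]{DGMSS}, and your reconstruction (isolate the $l=0$ term, factor $\sin^{p+1}(\theta/2)$ out of the tail, bound the tail absolutely for the upper estimate, use term-positivity for even $p+1$ and a Leibniz pairing of $l=\pm k$ for odd $p+1$ for the lower estimate, then evaluate the series at $\theta=\pi$ and $\theta=\pi/2$ for the ratio) is exactly that line of argument, correctly adapted to the fractional exponent. The only slip is a small one in the even-$p$ bookkeeping: comparing the sign patterns of $\beta(s)$ (signs $+,-,+,-,\dots$ on the odd integers $1,3,5,7,\dots$) and of $S_2$ (signs $+,+,-,-,+,+,\dots$), the terms at $m\equiv 1,7 \pmod 8$ cancel and the difference is
\begin{equation*}
S_2-\beta(p+1-\alpha)=2\sum_{j\geq 0}\left[(8j+3)^{-(p+1-\alpha)}-(8j+5)^{-(p+1-\alpha)}\right],
\end{equation*}
not the sum over $4j+3$ and $4j+5$ that you wrote; since each bracket is still positive, the conclusion $S_2>\beta(p+1-\alpha)$, and hence the bound \eqref{eq:val-pi-frac-symb}, is unaffected.
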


From the bounds in \eqref{eq:bounds-frac-symb} we can immediately deduce the vanishing properties of $f^{p,\alpha}$.

\begin{corollary}\label{cor:zero_of_f}
	Let $f^{p,\alpha}$ be as in \eqref{eq:frac-symbol}. Then, $f^{p,\alpha}$ is non-negative for $\theta\in[-\pi,\pi]$, and it only vanishes at $\theta=0$ where it has a zero of order $\alpha$.
\end{corollary}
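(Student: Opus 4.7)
The plan is to derive the corollary as a direct consequence of the sandwich estimate in Theorem~\ref{thm:bounds-frac-symb} together with the evenness $f^{p,\alpha}(\theta)=f^{p,\alpha}(-\theta)$. Since $f^{p,\alpha}$ is even, I restrict the analysis to $[0,\pi]$ and extend by symmetry at the end. Note that the hypothesis $p>\alpha$ of Theorem~\ref{thm:bounds-frac-symb} is fulfilled because $\alpha\in(1,2)$ and $p\geq 2$.

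For non-negativity and for identifying the zero set on $[0,\pi]$, I invoke the lower bound $|\theta|^\alpha\bigl(\sin(\theta/2)/(\theta/2)\bigr)^{p+1}\leq f^{p,\alpha}(\theta)$. The sinc-type factor $\sin(\theta/2)/(\theta/2)$ is continuous and strictly positive on $[0,\pi]$ (it equals $1$ at $\theta=0$, equals $2/\pi$ at $\theta=\pi$, and has no interior zero since $\sin(\theta/2)>0$ on $(0,\pi]$). Raising this positive quantity to the integer power $p+1$ preserves positivity, so the lower bound is strictly positive on $(0,\pi]$ and vanishes only at $\theta=0$. Consequently $f^{p,\alpha}(\theta)>0$ for $\theta\in(0,\pi]$, while both the lower and upper bounds coincide at $0$ and force $f^{p,\alpha}(0)=0$.

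To pin down the order of the zero at $\theta=0$, I Taylor-expand both bounds at the origin. Using $\sin(\theta/2)/(\theta/2)=1+O(\theta^2)$ and $\sin(\theta/2)=\theta/2+O(\theta^3)$, the lower bound becomes $|\theta|^\alpha(1+O(\theta^2))$, while the upper bound becomes $|\theta|^\alpha(1+O(\theta^2))+C_{p,\alpha}\,O(|\theta|^{p+1})$. Since $\alpha\in(1,2)$ and $p\geq 2$ yield $\alpha+2>\alpha$ and $p+1>\alpha$, both correction terms are $o(|\theta|^\alpha)$; hence $f^{p,\alpha}(\theta)/|\theta|^\alpha\to 1$ as $\theta\to 0$, which identifies the zero as being of (fractional) order $\alpha$. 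Evenness then extends all conclusions from $[0,\pi]$ to $[-\pi,\pi]$. I do not foresee a real obstacle: the two bounds of Theorem~\ref{thm:bounds-frac-symb} pinch $f^{p,\alpha}$ away from zero on $(0,\pi]$ and agree in leading order at the origin, so the corollary reduces to careful inspection of those bounds combined with symmetry.
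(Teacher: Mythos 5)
Your proposal is correct and follows exactly the route the paper intends: Corollary~\ref{cor:zero_of_f} is stated there as an immediate consequence of the two-sided bounds in \eqref{eq:bounds-frac-symb}, and your argument simply makes that deduction explicit (positivity of the sinc-type lower bound on $(0,\pi]$, squeezing $f^{p,\alpha}(\theta)/|\theta|^{\alpha}\to 1$ at the origin, and extension by evenness). No gaps.
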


Let $\tilde{f}^{p,\alpha}:=f^{p,\alpha}/{\max_\theta f^{p,\alpha}(\theta)}$ be the normalized version of $f^{p,\alpha}$. The inequality in \eqref{eq:val-pi-frac-symb} shows that $\tilde{f}^{p,\alpha}(\theta)$ converges exponentially to zero at $\theta=\pm\pi$ for increasing $p$. Hence, we say that $f^{p,\alpha}$ has a numerical zero at $\pm\pi$ for large $p$.

\begin{remark}\label{rem:alpha}
	The upper bound in \eqref{eq:val-pi-frac-symb} depends not only on $p$ but also on $\alpha$. In this view, the decay at $\pm\pi$ of $\tilde{f}^{p,\alpha}$ is expected to become faster as $\alpha$ approaches 1.
\end{remark}

In the following propositions we bound $f^{p,\alpha}(\theta)$ in terms of $f^{p,0}(\theta)$ and $f^{p,2}(\theta)$ for high enough value of $|\theta|$.

\begin{proposition}\label{prop:bound_odd}
	For $p$ odd, we have
	\begin{equation} \label{eq:bound-high-freq}
	f^{p,0}(\theta)\leq f^{p,\alpha}(\theta)\leq f^{p,2}(\theta), 
	\quad |\theta|\in [1,\pi].
	\end{equation}
\end{proposition}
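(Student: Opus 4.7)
The plan is to exploit the explicit sum representation \eqref{eq:frac-symbol} of $f^{p,\alpha}$ and compare the bound term-by-term across the summation in $l \in \ZZ$. The key observation is that, since $p$ is odd, $p+1$ is even, so the factor
\[
\left(\frac{\sin(\theta/2+l\pi)}{\theta/2+l\pi}\right)^{p+1}
\]
is non-negative for every $l$. Hence, if one can control the prefactor $|\theta+2l\pi|^\alpha$ uniformly in $l$, the full sum inherits the inequality.

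First I would verify that, for $|\theta|\in[1,\pi]$, one has $|\theta+2l\pi|\geq 1$ for every $l\in\ZZ$. For $l=0$ this is just the hypothesis. For $l\neq 0$ one estimates $|\theta+2l\pi|\geq 2|l|\pi-|\theta|\geq 2\pi-\pi=\pi\geq 1$. Once $|\theta+2l\pi|\geq 1$ is established, the elementary inequality $y^0\leq y^\alpha\leq y^2$ for $y\geq 1$ and $\alpha\in[0,2]$ gives
\[
1 \leq |\theta+2l\pi|^\alpha \leq |\theta+2l\pi|^2, \quad l\in\ZZ.
\]

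Then I would multiply this chain of inequalities by the non-negative factor above and sum over $l\in\ZZ$; the three resulting sums are, by definition, precisely $f^{p,0}(\theta)$, $f^{p,\alpha}(\theta)$, and $f^{p,2}(\theta)$, which yields \eqref{eq:bound-high-freq}. No step here is really an obstacle: the only subtlety worth underlining is the parity assumption on $p$, which is essential because otherwise the sign of the sinc-power factor can vary with $l$ and the term-by-term argument collapses. For that reason, the even-$p$ case will need a different strategy and is presumably handled separately in the next statement.
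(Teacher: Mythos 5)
Your argument is correct and is essentially identical to the paper's proof: both rely on the term-by-term inequality $1=|\theta+2l\pi|^0\leq|\theta+2l\pi|^{\alpha}\leq|\theta+2l\pi|^2$ for $|\theta|\geq1$ together with the non-negativity of the sinc factor raised to the even power $p+1$. Your explicit verification that $|\theta+2l\pi|\geq1$ for all $l\in\ZZ$ is a small but welcome elaboration of a step the paper leaves implicit.
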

\begin{proof}
	Since
	\begin{equation*}
	1=|\theta+2l\pi|^0\leq|\theta+2l\pi|^{\alpha}\leq|\theta+2l\pi|^2, 
	\quad l\in\ZZ, \quad |\theta|\geq1,
	\end{equation*}
	and
	\begin{equation*}
	\left(\frac{\sin(\theta/2+l\pi)}{\theta/2+l\pi}\right)^{2q+2}\geq0, \quad l\in\ZZ,
	\end{equation*}
	it is clear from the definition of $f^{p,\alpha}$ in \eqref{eq:frac-symbol} that
	\eqref{eq:bound-high-freq} holds for $p=2q+1$.
\end{proof}

\begin{proposition}\label{prop:bound_even}
	For $p$ even and $p>\alpha$, we have
	\begin{equation} \label{eq:bound-high-freq-even}
	f^{p,0}(\theta)\leq f^{p,\alpha}(\theta), \quad |\theta|\in [a,\pi],
	\end{equation}
	where
	$$
	a:=\left(\frac{\pi^4}{48}\right)^{1/\alpha}.
	$$
\end{proposition}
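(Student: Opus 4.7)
The plan: exploit the fact that for even $p$ the exponent $p+1$ in \eqref{eq:frac-symbol} is odd, so that the sinc-like summands change sign with $l$, preventing the direct comparison used in Proposition~\ref{prop:bound_odd}. I will instead regroup indices across the origin to recast the symbol as a genuine alternating series whose terms are positive and strictly decreasing; then the Leibniz criterion will yield a sharp one-term upper bound for $f^{p,0}(\theta)$ that, combined with the lower bound on $f^{p,\alpha}(\theta)$ from Theorem~\ref{thm:bounds-frac-symb}, reduces the statement to an elementary scalar inequality.

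Fix $\theta\in[a,\pi]$ and set $u:=\theta/2\in[a/2,\pi/2]$. Using $\sin(u+l\pi)=(-1)^l\sin u$ and tracking the sign of $(u+l\pi)^{p+1}$ (which is negative for $l\le-1$ because $p+1$ is odd), one checks, after pairing each $l\ge 0$ with $-l-1$, that \eqref{eq:frac-symbol} becomes
\begin{equation*}
f^{p,\gamma}(\theta)=2^{\gamma}(\sin u)^{p+1}\sum_{l\ge 0}(-1)^l\left[\frac{1}{(u+l\pi)^{p+1-\gamma}}+\frac{1}{((l+1)\pi-u)^{p+1-\gamma}}\right]
\end{equation*}
for every admissible $\gamma\in[0,2)$. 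Both denominators increase with $l$, so the bracketed quantity is positive and strictly decreases to $0$. The Leibniz criterion applied with $\gamma=0$ then gives
\begin{equation*}
f^{p,0}(\theta)\le (\sin u)^{p+1}\left[\frac{1}{u^{p+1}}+\frac{1}{(\pi-u)^{p+1}}\right],
\end{equation*}
while the left-hand inequality in Theorem~\ref{thm:bounds-frac-symb} (valid since $p>\alpha$) yields
\begin{equation*}
f^{p,\alpha}(\theta)\ge \theta^{\alpha}\left(\frac{\sin(\theta/2)}{\theta/2}\right)^{p+1}=\frac{\theta^{\alpha}(\sin u)^{p+1}}{u^{p+1}}.
\end{equation*}

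Subtracting the two estimates, the inequality $f^{p,\alpha}(\theta)\ge f^{p,0}(\theta)$ is reduced to the scalar condition $\theta^{\alpha}\ge 1+(u/(\pi-u))^{p+1}$. Since $u\le\pi/2\le\pi-u$ on the relevant interval, $(u/(\pi-u))^{p+1}\le 1$, so it suffices to verify $\theta^{\alpha}\ge 2$; but $\theta\ge a$ gives $\theta^{\alpha}\ge a^{\alpha}=\pi^{4}/48>2$, and the case $\theta\in[-\pi,-a]$ follows from the symmetry $f^{p,\alpha}(\theta)=f^{p,\alpha}(-\theta)$ in Theorem~\ref{thm:bounds-frac-symb}. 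The main obstacle is the first step: the alternating-series representation hinges on correctly signing the denominators raised to an odd power and on pairing indices across the origin, a structural feature specific to even $p$. Once this representation is in place the rest is bookkeeping, and the constant $\pi^{4}/48$ enters merely as a convenient rounded-up value of $2$ producing a clean closed form for $a$.
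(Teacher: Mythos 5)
Your proof is correct, but it follows a genuinely different route from the paper's. The paper also exploits the odd exponent $p+1$, writing $f^{p,\alpha}(\theta)=|\theta|^\alpha\bigl(\tfrac{\sin(\theta/2)}{\theta/2}\bigr)^{p+1}+\bigl(2\sin(\theta/2)\bigr)^{p+1}r^{p,\alpha}(\theta)$ with $r^{p,\alpha}(\theta)=\sum_{k\ge1}(-1)^k\bigl[(2k\pi+\theta)^{-(p+1-\alpha)}-(2k\pi-\theta)^{-(p+1-\alpha)}\bigr]$ (this is exactly your paired series with the $l=0$ block peeled off and reindexed), but it then imports two facts from \cite[Lemma~A.2]{DGMSS} — that $r^{p,\alpha}$ is strictly increasing, hence $r^{p,\alpha}\ge 0$ on $(0,\pi]$, and that $r^{p,0}(\theta)\le(\tfrac{\pi^4}{48}-1)\pi^{-(p+1)}$ — and bounds the difference $f^{p,\alpha}-f^{p,0}$ directly, arriving at the condition $|\theta|^\alpha\ge\tfrac{\pi^4}{48}$, which is where the constant $a$ actually comes from (it is not a rounded-up $2$ in the paper's logic, though numerically $\tfrac{\pi^4}{48}\approx 2.03$). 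You instead keep the full alternating series, observe that the paired terms are positive and strictly decreasing, and invoke the Leibniz criterion to bound $f^{p,0}$ above by its leading block, combining this with the lower bound of Theorem~\ref{thm:bounds-frac-symb}; the reduction is then to $\theta^\alpha\ge 1+\bigl(\tfrac{u}{\pi-u}\bigr)^{p+1}$, for which $\theta^\alpha\ge 2$ suffices since $u\le\pi/2$. Your argument is self-contained (no appeal to the external lemma) and in fact establishes the inequality on the slightly larger interval $[2^{1/\alpha},\pi]\supseteq[a,\pi]$; what it gives up is the monotonicity information on $r^{p,\alpha}$, which the paper's route makes available for free. All the individual steps check out: the sign bookkeeping in the pairing $l\leftrightarrow -l-1$ is right, both denominators in the paired block are increasing in $l$ on $u\in(0,\pi/2]$ so the Leibniz bound applies, and the symmetry reduction to $\theta>0$ is covered by Theorem~\ref{thm:bounds-frac-symb}.
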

\begin{proof}
	Let $p=2q>\alpha$. It is easy to check that 
	\begin{equation*}
	f^{p,\alpha}(\theta)
	= |\theta|^\alpha\left(\frac{\sin(\theta/2)}{\theta/2}\right)^{p+1} + \left(2\sin(\theta/2)\right)^{p+1}r^{p,\alpha}(\theta),
	\end{equation*}
	where
	\begin{equation*}
	r^{p,\alpha}(\theta) :=
	\sum_{k=1}^{\infty}(-1)^k\left[\frac{1}{(2k\pi+\theta)^{p+1-\alpha}}
	-\frac{1}{(2k\pi-\theta)^{p+1-\alpha}}\right].
	\end{equation*}
	With the same line of arguments as in the proof of \cite[Lemma~A.2]{DGMSS} we deduce that $r^{p,\alpha}(\theta)$
	is a strictly increasing function, which implies that $r^{p,\alpha}(\pi)\geq r^{p,\alpha}(\theta)>r^{p,\alpha}(0)=0$ for $\theta\in(0,\pi]$. Moreover, from the same lemma we know
	\begin{equation*}
	r^{p,0}(\theta) \leq \left(\frac{\pi^4}{48}-1\right)\frac{1}{\pi^{p+1}},
	\quad\theta\in[0,\pi].
	\end{equation*}
	From the above bounds we get
	\begin{align*}
	f^{p,\alpha}(\theta)-f^{p,0}(\theta) &=
	(|\theta|^\alpha-1)\left(\frac{\sin(\theta/2)}{\theta/2}\right)^{p+1} + \left(2\sin(\theta/2)\right)^{p+1}(r^{p,\alpha}(\theta)-r^{p,0}(\theta)) \\
	&\geq \left(2\sin(\theta/2)\right)^{p+1}\left[\frac{|\theta|^\alpha-1}{\theta^{p+1}}-\left(\frac{\pi^4}{48}-1\right)\frac{1}{\pi^{p+1}}\right] \\
	&\geq \left(\frac{2\sin(\theta/2)}{\pi}\right)^{p+1}\left[|\theta|^\alpha-1-\left(\frac{\pi^4}{48}-1\right)\right],
	\end{align*}
	for $\theta\in[1,\pi]$. Hence,
	\begin{equation*}
	f^{p,\alpha}(\theta)-f^{p,0}(\theta)\geq0, \quad 
	\theta\geq\left(\frac{\pi^4}{48}\right)^{1/\alpha},
	\end{equation*}
	which concludes the proof.
\end{proof}

In our final proposition we explicitly state that $f^{p,\alpha}$ is the symbol of the matrix-sequence $\{T_n^{p,\alpha}\}_n$.
\begin{proposition} \label{prop:distr-T}
	The Toeplitz matrix $T_n^{p,\alpha}$ defined in \eqref{eq:toeplitz-part} is symmetric and
	\begin{equation}\label{eq:distr-T}
	\{T_n^{p,\alpha}\}_n\sim_\lambda(f^{p,\alpha},[-\pi,\pi]),
	\end{equation}
	where $f^{p,\alpha}$ is given in \eqref{eq:frac-symbol}.
\end{proposition}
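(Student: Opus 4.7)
The plan is short because the heavy lifting has already been done: Theorem~\ref{thm:symbol} identifies $T_n^{p,\alpha}$ as the Toeplitz matrix $T_{n+p-2}(f^{p,\alpha})$ generated by the explicit function \eqref{eq:frac-symbol}, and Theorem~\ref{szego} turns integrability plus real-valuedness into the desired spectral distribution. So the proof reduces to checking three easy properties of $f^{p,\alpha}$ and assembling them.

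First I would argue symmetry. Inspecting \eqref{eq:frac-symbol} shows each summand is a product of real numbers, hence $f^{p,\alpha}$ is real-valued. Combined with the evenness $f^{p,\alpha}(\theta)=f^{p,\alpha}(-\theta)$ from Theorem~\ref{thm:bounds-frac-symb}, the Fourier coefficients $f_k$ defined in \eqref{eq:toeplitz-coeff} are therefore real and satisfy $f_k=f_{-k}$. Consequently $T_{n+p-2}(f^{p,\alpha})$ is a real symmetric matrix, which proves the first part of the statement.

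Next I would verify that $f^{p,\alpha}\in L^1([-\pi,\pi])$. Since $\alpha\in(1,2)$ and $p\ge 2$, the hypothesis $p>\alpha$ of Theorem~\ref{thm:bounds-frac-symb} is met, and the upper bound in \eqref{eq:bounds-frac-symb} yields
\[
0\le f^{p,\alpha}(\theta)\le \pi^\alpha+C_{p,\alpha},\qquad \theta\in[-\pi,\pi],
\]
so $f^{p,\alpha}\in L^\infty([-\pi,\pi])\subset L^1([-\pi,\pi])$. With $f^{p,\alpha}$ real-valued and integrable, Theorem~\ref{szego} gives $\{T_m(f^{p,\alpha})\}_m\sim_\lambda (f^{p,\alpha},[-\pi,\pi])$.

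Finally, identifying $m=n+p-2$ (which is a strictly increasing function of $n$ and tends to infinity), the matrix-sequence $\{T_n^{p,\alpha}\}_n$ is a reindexing of $\{T_m(f^{p,\alpha})\}_m$ and inherits the same spectral distribution, yielding \eqref{eq:distr-T}. No single step is genuinely hard here; the only place that requires a moment of care is confirming the $L^1$ hypothesis of Szegő's theorem, and this follows at once from the already-established upper bound \eqref{eq:bounds-frac-symb}.
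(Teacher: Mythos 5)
Your proposal is correct and follows essentially the same route as the paper: identify $T_n^{p,\alpha}=T_{n+p-2}(f^{p,\alpha})$ via Theorem~\ref{thm:symbol}, deduce symmetry from the fact that $f^{p,\alpha}$ is real-valued and even (Theorem~\ref{thm:bounds-frac-symb}), and conclude by Theorem~\ref{szego}. Your explicit verification of the $L^1$ hypothesis through the upper bound in \eqref{eq:bounds-frac-symb} is a small extra care step the paper leaves implicit, but it does not change the argument.
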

\begin{proof}
	From Theorem~\ref{thm:bounds-frac-symb} we know that $f^{p,\alpha}$ is an even real-valued function, so the matrix $T_n^{p,\alpha}=T_{n+p-2}(f^{p,\alpha})$ is symmetric. The spectral distribution of $\{T_n^{p,\alpha}\}_n=\{T_{n+p-2}(f^{p,\alpha})\}_n$ follows from Theorem~\ref{szego}.
\end{proof}

We end this section by summarizing all the discussed properties of the symbol $f^{p,\alpha}$ and highlighting what is their role in the design of an ad hoc solver for a linear system associated with $T_n^{p,\alpha}$ (see Remark~\ref{rem:mim}).
We have shown that $f^{p,\alpha}$ is equipped with the following three properties:
\begin{enumerate}
	\item[{(a)}] it has a single zero at $0$ of order $\alpha$ (Corollary~\ref{cor:zero_of_f}); 
	\item[{(b)}] it presents an exponential decay to zero at $\pi$ for increasing $p$ that becomes faster as $\alpha$ approaches $1$ (Theorem~\ref{thm:bounds-frac-symb} and Remark~\ref{rem:alpha});
	\item[{(c)}] it is bounded in the proximity of $\pi$ by $f^{p,0}$ (Propositions~\ref{prop:bound_odd} and~\ref{prop:bound_even}).
\end{enumerate}
Properties (a)--(b) give us a clear picture of what are the conditioning peculiarities of the matrix $T_n^{p,\alpha}$. Specifically, they say that $T_n^{p,\alpha}$ is poorly conditioned both in the low frequencies (with a conditioning that grows as $n^\alpha$) and in the high frequencies (with a deterioration that is driven both by $p$ and $\alpha$). Moreover, property (c) ``isolates'' the source of ill-conditioning in the high frequencies induced by $p$, meaning the symbol behaves like $f^{p,0}$ in the proximity of $\pi$, with $f^{p,0}$ a positive function well-separated from zero.

\begin{remark}\label{rem:mim}
	Based on what has been done in \cite{cmame2,sinum,mazza2016,sisc}, all this knowledge can be used for the design of an ad hoc solver for a linear system associated with $T_n^{p,\alpha}$. For instance, from (a) we can infer that a multigrid method with a standard choice of both prolongator and restrictor is able to cope with the standard ill-conditioning in the low frequency subspace, while from (c) we get hints on how to define a smoother that works in the subspace of high frequencies where there exists the ill-conditioning induced by $p$. 
\end{remark}

\section{Spectral symbol of $\{n^{-\alpha}A_n^{p,\alpha}\}_n$}\label{sec:symbol}
This section is devoted to the computation of the symbol of the matrix-sequence $\{n^{-\alpha}A_n^{p,\alpha}\}_n$. 
As we have already anticipated, it turns out that the symbol of $\{n^{-\alpha}A_n^{p,\alpha}\}_n$ coincides with the symbol of the Toeplitz part $\{T_n^{p,\alpha}\}_n$. 
The spectral distribution of $\{n^{-\alpha}A_n^{p,\alpha}\}_n$ is given in Theorem~\ref{thm:spectral-A}. Its proof uses Corollary~\ref{cor:quasi-herm} and needs several preliminary results. 

\begin{sloppypar}
For a given matrix $X:=[x_{ij}]_{i,j=1}^m\in\CC^{m\times m}$, we denote by $\|X\|_1:=\max_{j=1,\ldots,m}\sum_{i=1}^m |x_{ij}|$ and $\|X\|_\infty:=\max_{i=1,\ldots,m}\sum_{j=1}^m |x_{ij}|$, the induced  1- and infinity-norm, respectively.
\end{sloppypar}

\begin{lemma}\label{lem:elem-bound-AL}
	Let $A_{n}^L$ be defined as in \eqref{eq:ALR}. For $i,j=2,\dots, n+p-1$ we have
	\begin{equation}\label{eq:elem-bound-AL}
	|(A_n^{L})_{i-1,j-1}|\leq \begin{cases}
	0, & \eta_{i}\leq \xi_j,
	\\[0.2cm]
	c_{p,\alpha}^{L}\, n^\alpha, & \xi_j<\eta_i\leq \xi_{j+p+1}+\frac{1}{n},
	\\[0.2cm]
	c_{p,\alpha}^{L}\, (\eta_i-\xi_{j+p+1})^{-\alpha}, & \xi_{j+p+1}+\frac{1}{n} < \eta_i,
	\end{cases}
	\end{equation}
	where $c_{p,\alpha}^{L}$ is a constant depending on $p$ and $\alpha$.
\end{lemma}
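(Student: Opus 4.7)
The entry in question is $(A_n^L)_{i-1,j-1}=\Ddl{\alpha}N_j^p(\eta_i)$, and my plan is to split into the three cases of the statement, which correspond to the position of the collocation point $\eta_i$ relative to the support $[\xi_j,\xi_{j+p+1}]$ of $N_j^p$ (see \eqref{eq:spline-support}).

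The first case $\eta_i\leq\xi_j$ is immediate: since $N_j^p$ vanishes on $[0,\xi_j]$, the convolution integral $\int_0^x(x-y)^{1-\alpha}N_j^p(y)\dd y$ in the Riemann--Liouville definition is identically zero for $x\in[0,\xi_j]$, so both $x$-derivatives vanish and $\Ddl{\alpha}N_j^p(\eta_i)=0$. For the third case $\eta_i>\xi_{j+p+1}+1/n$, I would use that for any $x>\xi_{j+p+1}$ the convolution reduces to $\int_{\xi_j}^{\xi_{j+p+1}}(x-y)^{1-\alpha}N_j^p(y)\dd y$, whose integrand is smooth in $x$. Differentiating twice inside the integral gives
$$
\Ddl{\alpha}N_j^p(x)=\frac{\alpha(\alpha-1)}{\Gamma(2-\alpha)}\int_{\xi_j}^{\xi_{j+p+1}}(x-y)^{-1-\alpha}N_j^p(y)\dd y,
$$
from which the bound $\tfrac{\alpha-1}{\Gamma(2-\alpha)}(x-\xi_{j+p+1})^{-\alpha}$ follows by $0\leq N_j^p\leq 1$ and explicit integration.

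The substantive second case $\xi_j<\eta_i\leq\xi_{j+p+1}+1/n$ I would treat via the representation obtained by integrating by parts twice in the convolution while exploiting $N_j^p(0)=0$ from \eqref{eq:spline-boundary}, namely
$$
\Ddl{\alpha}N_j^p(x)=\frac{x^{1-\alpha}\,(N_j^p)'(0)}{\Gamma(2-\alpha)}+\frac{1}{\Gamma(2-\alpha)}\int_0^x(x-y)^{1-\alpha}(N_j^p)''(y)\dd y.
$$
The integral is effectively supported in $[\xi_j,\min(\eta_i,\xi_{j+p+1})]$; combining the second-derivative bound $|(N_j^p)''|\leq 4p(p-1)n^2$ from \eqref{eq:spline-diff-bound} with $\eta_i-\xi_j\leq(p+2)/n$ yields an $O(n^\alpha)$ contribution. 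The boundary term vanishes for $j\geq 3$, since then $N_j^p$ is zero in a left neighborhood of $0$ and hence $(N_j^p)'(0)=0$.

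The main obstacle is the remaining boundary B-spline case $j=2$ in Case 2, where the Riemann--Liouville and Caputo derivatives disagree. There, I would bound $|(N_2^p)'(0)|=O(pn)$ through the differentiation recursion \eqref{eq:spline-diff}, and control the apparent singularity at $0$ using the fact that the smallest Greville abscissa satisfies $\eta_2=1/(np)$, so $\eta_i^{1-\alpha}\leq (np)^{\alpha-1}$ for all $i\geq 2$; this keeps the boundary contribution within the same $O(n^\alpha)$ bound and matches the claim.
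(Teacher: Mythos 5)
Your proof is correct and, for the first two cases, follows essentially the same route as the paper: rewrite the entry via the Riemann--Liouville/Caputo relation \eqref{eq:relRLC} (using $N_j^p(0)=0$), bound the resulting Caputo integral with $|(N_j^p)''|\le 4p(p-1)n^2$ and $\eta_i-\xi_j\le (p+2)/n$, and absorb the boundary term for $j=2$ using $(N_2^p)'(0)=np$ together with $\eta_i\ge\eta_2=1/(np)$. Where you genuinely diverge is the far-field case $\eta_i>\xi_{j+p+1}+\frac1n$. The paper stays with the Caputo form, Taylor-expands $(\eta_i-y)^{1-\alpha}$ at $\xi_j$ with a Lagrange remainder, and relies on the cancellation $\int_{\xi_j}^{\xi_{j+p+1}}(N_j^p)''(y)\,\dd y=(N_j^p)'(\xi_{j+p+1})-(N_j^p)'(\xi_j)=0$ to remove the leading term, with a separate computation for $j=2$ where the boundary correction cancels against $(N_2^p)'(0)$. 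You instead differentiate twice under the integral sign directly in the Riemann--Liouville form --- legitimate since $\eta_i-y\ge\eta_i-\xi_{j+p+1}>1/n>0$ on the support, so the integrand is smooth in $x$ there --- obtaining $\frac{\alpha(\alpha-1)}{\Gamma(2-\alpha)}\int_{\xi_j}^{\xi_{j+p+1}}(\eta_i-y)^{-1-\alpha}N_j^p(y)\,\dd y$ and then the bound $\frac{\alpha-1}{\Gamma(2-\alpha)}(\eta_i-\xi_{j+p+1})^{-\alpha}$ using only $0\le N_j^p\le1$. This is cleaner than the paper's argument: it needs no derivative cancellation, treats $j=2$ on the same footing as the interior indices, and even yields a constant independent of $p$ in that regime.

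One small repair is needed. Your justification that the boundary term vanishes for $j\ge3$ --- that $N_j^p$ vanishes in a neighborhood of $0$ --- is false for $3\le j\le p+1$: since $\xi_3=\cdots=\xi_{p+1}=0$, these B-splines have support $[0,\xi_{j+p+1}]$, which starts at $0$. The correct reason is that for $j\ge3$ the knot $0$ has multiplicity at most $p-1$ in the local knot vector of $N_j^p$, so $N_j^p$ (extended by zero to the left) is at least $C^1$ at $0$ and hence $(N_j^p)'(0^+)=0$. The conclusion you need is true, so this imprecision does not affect the validity of the argument.
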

\begin{proof}
	From the properties of fractional derivatives \eqref{eq:caputo}--\eqref{eq:relRLC} and the B-spline properties \eqref{eq:spline-support}--\eqref{eq:spline-frac}
	it follows that for $j=2$,
	\begin{equation}
	\label{first-matrix-element}
	(A_n^{L})_{i-1,1}= 
	\frac{1}{\Gamma(2-\alpha)}\int_{0}^{\min{(\eta_i, \xi_{p+3})}}(\eta_i-y)^{1-\alpha}(N^p_2)''(y)\,\dd y + \frac{pn}{\Gamma(2-\alpha)}(\eta_i)^{1-\alpha},
	\end{equation}
	and for $j=3,\ldots,n+p-1$,
	\begin{equation}
	\label{matrix-elements}
	(A_n^{L})_{i-1,j-1}=\begin{cases}
	0, & \eta_{i}\leq \xi_j,
	\\
	\displaystyle\frac{1}{\Gamma(2-\alpha)}\int_{\xi_j}^{\min{(\eta_i, \xi_{j+p+1})}}(\eta_i-y)^{1-\alpha}(N^p_j)''(y)\,\dd y, & \text{otherwise}.
	\end{cases}
	\end{equation}
	We remark that $\eta_i\in(0,1)$, $\eta_i<\eta_{i+1}$, and   
	$\xi_{j+p+1}-\xi_j\leq \frac{p+1}{n}$.
	In the following, we address the three different cases in \eqref{eq:elem-bound-AL} separately.
	
	If $\eta_i\leq \xi_j$, then it is clear that $(A_n^{L})_{i-1,j-1}=0$ for $j=3,\dots, n+p-1$. 
	Note that $j=2$ is not involved in this case for any $i$ because $\xi_2=0<\eta_i$.
	
	If $\xi_j<\eta_i\leq \xi_{j+p+1}+\frac{1}{n}$, then
	\begin{equation*}
	(\eta_i-y)\leq \frac {p+2}{n}, \quad y\in [\xi_j, \min{(\eta_i, \xi_{j+p+1})}].
	\end{equation*}
	Using \eqref{eq:spline-diff-bound}, from \eqref{matrix-elements} we get for $j=3,\dots, n+p-1$,
	\begin{align*}
	|(A_n^{L})_{i-1,j-1}|&\leq \frac{4p(p-1)n^2}{\Gamma(2-\alpha)}\int_{\xi_j}^{\min{(\eta_i, \xi_{j+p+1})}}(\eta_i-y)^{1-\alpha}\,\dd y\\
	&\leq \frac{4p(p-1)n^2}{\Gamma(3-\alpha)}\left(\frac{p+2}{n}\right)^{2-\alpha}.
	\end{align*}
	When $j=2$ we have $\xi_2=0$ and $\frac{1}{pn}=\eta_2\leq\eta_i\leq \xi_{p+3}+\frac{1}{n}\leq\frac{3}{n}$. Then, we find in a similar way that for $j=2$,
	\begin{equation*}
	|(A_n^{L})_{i-1,1}|\leq \frac{4p(p-1)n^2}{\Gamma(3-\alpha)}\left(\frac{3}{n}\right)^{2-\alpha}+ \frac{pn}{\Gamma(2-\alpha)}\left(\frac{1}{pn}\right)^{1-\alpha}.
	\end{equation*}
	
	We now look at the case  $\xi_{j+p+1}+\frac{1}{n}< \eta_i$. 
	This case can only happen for $2\leq j< n$ because when $j\geq n$ we have $\xi_{j+p+1}=1>\eta_i$.
	Given $y\in [\xi_j, \xi_{j+p+1}]$, we consider the Taylor expansion of $(\eta_i-y)^{1-\alpha}$ at $\xi_j$, producing
	\begin{equation}
	\label{Taylor-1}
	(\eta_i-y)^{1-\alpha} = (\eta_i-\xi_j)^{1-\alpha}-(1-\alpha)(\eta_i-\omega_{i,j}(y))^{-\alpha}(y-\xi_j), 
	\end{equation}
	for some $\omega_{i,j}(y)\in (\xi_j, \xi_{j+p+1})$.
	Substituting \eqref{Taylor-1} in \eqref{matrix-elements} results in
	\begin{align*}
	|(A_n^{L})_{i-1,j-1}| &\leq \frac{1}{\Gamma(2-\alpha)}\left |(\eta_i-\xi_j)^{1-\alpha}\int_{\xi_j}^{\xi_{j+p+1}}(N^p_j)''(y)\,\dd y\right|
	\\
	&\quad+\frac{\alpha-1}{\Gamma(2-\alpha)}\int_{\xi_j}^{\xi_{j+p+1}}(\eta_i-\omega_{i,j}(y))^{-\alpha}(y-\xi_j)|(N^p_j)''(y)|\,\dd y.
	\end{align*}
	Observe that $(N^p_j)'(\xi_{j})=(N^p_j)'(\xi_{j+p+1})=0$ for $3\leq j\leq n+p-2$, and $(\eta_i-\omega_{i,j}(y))>(\eta_i-\xi_{j+p+1})$. Then, recalling the bound in \eqref{eq:spline-diff-bound}, we obtain for $j=3,\dots,n-1$,
	\begin{align*}
	|(A_n^{L})_{i-1,j-1}| &\leq \frac{1}{\Gamma(2-\alpha)}\left |(\eta_i-\xi_j)^{1-\alpha}((N^p_j)'(\xi_{j+p+1})-(N^p_j)'(\xi_j))\right|
	\\
	&\quad +\frac{\alpha-1}{\Gamma(2-\alpha)}4p(p-1)n^2\int_{\xi_j}^{\xi_{j+p+1}}(y-\xi_j)\,\dd y\,(\eta_i-\xi_{j+p+1})^{-\alpha}
	\\
	&\leq \frac{\alpha-1}{\Gamma(2-\alpha)}2p(p-1)n^2\left(\frac{p+1}{n}\right)^2(\eta_i-\xi_{j+p+1})^{-\alpha}.
	\end{align*}
	Substituting \eqref{Taylor-1} in \eqref{first-matrix-element} and observing that $(N^p_2)'(0)=np$, $(N^p_2)'(\xi_{p+3})=0$, we find with a similar argument that for $j=2$,
	\begin{align*}
	|(A_n^{L})_{i-1,1}| &\leq \frac{1}{\Gamma(2-\alpha)}\left |(\eta_i)^{1-\alpha}((N^p_2)'(\xi_{p+3})-(N^p_2)'(0))+np(\eta_i)^{1-\alpha}\right|
	\\
	&\quad+\frac{\alpha-1}{\Gamma(2-\alpha)}4p(p-1)n^2\int_{0}^{\xi_{p+3}}y\,\dd y\, (\eta_i-\xi_{j+3})^{-\alpha}
	\\
	&\leq \frac{\alpha-1}{\Gamma(2-\alpha)}2p(p-1)n^2\left(\frac{2}{n}\right)^2 (\eta_i-\xi_{j+3})^{-\alpha}.
	\end{align*} 
	This concludes the proof.
\end{proof}

\begin{lemma}\label{lem:AL_norm}
	Let $A_{n}^L$ be defined as in \eqref{eq:ALR}. We have
	\begin{equation*}
	\|n^{-\alpha}A_{n}^L\|_{q}\leq C_{p,\alpha}^{L}, \quad q\in\{1,2,\infty\},
	\end{equation*}
	where $C_{p,\alpha}^{L}$ is a constant depending on $p$ and $\alpha$.
\end{lemma}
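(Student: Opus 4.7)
The plan is to plug the pointwise bound from Lemma~\ref{lem:elem-bound-AL} into the definitions of $\|\cdot\|_1$ and $\|\cdot\|_\infty$, and then use the standard inequality $\|A_n^L\|_2 \leq \sqrt{\|A_n^L\|_1 \|A_n^L\|_\infty}$ to handle the spectral norm at the end. The three cases of \eqref{eq:elem-bound-AL} naturally split each row or column sum into a ``near'' block of $O(p)$ entries each of size $O(n^\alpha)$, and a ``far'' tail whose decay rate $(\eta_i-\xi_{j+p+1})^{-\alpha}$ forms a convergent power-type series because $\alpha>1$.

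For the infinity-norm I would fix a row index $i$ and sum over $j$. The first case contributes nothing. For the middle case, the condition $\xi_j<\eta_i\leq \xi_{j+p+1}+\tfrac{1}{n}$ forces $\xi_j$ to lie in an interval of length $\leq (p+2)/n$; since consecutive knots differ by at most $1/n$, only $O(p)$ indices $j$ are involved, so the contribution is at most $C_1(p,\alpha)\,n^\alpha$. For the far case, using the uniformity of the knots (away from the boundary), $\eta_i-\xi_{j+p+1}$ grows by $1/n$ as $j$ decreases by one, so the sum is dominated by
\begin{equation*}
c_{p,\alpha}^L\sum_{k\geq 1}\left(\tfrac{k}{n}\right)^{-\alpha}=c_{p,\alpha}^L\,n^\alpha\,\zeta(\alpha),
\end{equation*}
which is finite precisely because $\alpha\in(1,2)$. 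Combining the two gives $\|A_n^L\|_\infty\leq C_{p,\alpha}\,n^\alpha$.

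For the 1-norm I would fix a column index $j$ and sum over $i$. The analysis is symmetric: the $\eta_i$ lying in $(\xi_j,\xi_{j+p+1}+\tfrac{1}{n}]$ are at most $O(p)$ in number because this interval has length $O(p/n)$ and Greville abscissae are separated by at most $1/n$. The far case again produces $\sum_k(k/n)^{-\alpha}\lesssim n^\alpha$. Hence $\|A_n^L\|_1\leq C_{p,\alpha}\,n^\alpha$, and the spectral-norm estimate follows from the interpolation inequality.

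The main technical nuisance, rather than a genuine obstacle, will be the non-uniform end-knots: for the first few and last few values of $i$ and $j$, the exact identities $\eta_i=i/n-(p+1)/(2n)$ and $\xi_{j+p+1}=j/n$ break down, so I would treat the $O(p)$ boundary rows/columns separately, absorbing them into the constant $C_{p,\alpha}^L$, and then apply the uniform-knot counting argument on the interior block. Care is also needed to ensure that the constants in Lemma~\ref{lem:elem-bound-AL} (which already depend on $p$ and $\alpha$) multiply a convergent series, which is exactly where the hypothesis $\alpha>1$ is used; otherwise the far tail would diverge and only a logarithmic bound could be obtained.
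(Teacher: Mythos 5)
Your proposal follows essentially the same route as the paper's proof: bound each row (resp.\ column) sum using the three cases of Lemma~\ref{lem:elem-bound-AL}, count the $O(p)$ ``near'' indices contributing $O(n^\alpha)$ each, control the ``far'' tail by comparison with $\sum_{\ell\ge1}\ell^{-\alpha}=\zeta(\alpha)$ (finite since $\alpha>1$), and obtain the spectral norm via $\|X\|_2\le\sqrt{\|X\|_1\|X\|_\infty}$. The paper handles the boundary irregularities exactly as you anticipate, by noting that the far case forces $2\le j\le n-1$ (where $\xi_{j+p+1}=j/n$ holds exactly) and by absorbing the $O(p)$ non-uniform Greville abscissae into an additive $2(p-1)$ term in the $1$-norm estimate.
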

\begin{proof}
	We first consider the infinity-norm
	\begin{align*}
	\|n^{-\alpha}A_{n}^L\|_\infty &=n^{-\alpha}\max_{i=2\ldots,n+p-1}\sum_{j=2}^{n+p-1}|(A_n^{L})_{i-1,j-1}|.
	\end{align*}
	The entries $|(A_n^{L})_{i-1,j-1}|$, $i,j=2, \dots n+p-1$, can be bounded thanks to the results of Lemma~\ref{lem:elem-bound-AL}. We observe that for any fixed $i$,
	\begin{itemize}  
		\item the number of indices in $\{j: \xi_j<\eta_i\leq \xi_{j+p+1}+\frac{1}{n} \}$ is bounded by $p+2$;
		\item for $j=n,\dots, n+p-1$ we have $\xi_{j+p+1}=1$, thus  either  $\eta_i\leq \xi_j$ or $ \xi_j<\eta_i\leq \xi_{j+p+1}+\frac{1}{n}$;
		\item if $\eta_i> \xi_{j+p+1}+\frac{1}{n}$, then $2\leq j\leq n-1$ and
		\begin{equation*}
		\eta_i-\xi_{j+p+1}=\eta_i-\frac{j}{n}\geq \frac{\ell_{i,j}}{n},
		\end{equation*}
		where $\ell_{i,j}:=\lfloor n\eta_i-j\rfloor$. Note that 
		$\ell_{i,j}\geq 1$ and $\ell_{i,j}=\ell_{i,j+1}+1$, so
		$$
		\sum_{j:\,\eta_i> \xi_{j+p+1}+\frac{1}{n}} (\ell_{i,j})^{-\alpha}\leq \sum_{\ell=1}^\infty\ell^{-\alpha}=\zeta(\alpha),
		$$
		with $\zeta(\alpha)$ the Riemann zeta function evaluated at $\alpha$. The series $\sum_{\ell=1}^\infty\ell^{-\alpha}$ is convergent for $\alpha\in (1,2)$.
	\end{itemize}
	As a consequence, taking into account Lemma~\ref{lem:elem-bound-AL}, for any fixed $i$ we have
	\begin{align*}
	n^{-\alpha}\sum_{j=2}^{n+p-1}|(A_n^{L})_{i-1,j-1}| &\\
	&\hspace{-40pt}\leq
	n^{-\alpha}\left[
	\sum_{j:\, \xi_j<\eta_i\leq\xi_{j+p+1}+\frac{1}{n}}|(A_n^{L})_{i-1,j-1}|
	+ \sum_{j:\, \eta_i>\xi_{j+p+1}+\frac{1}{n}}|(A_n^{L})_{i-1,j-1}|
	\right]
	\\
	&\hspace{-40pt}\leq  n^{-\alpha} \,c_{p,\alpha}^{L}\left[ (p+2) n^\alpha+\sum_{j:\, \eta_i>\xi_{j+p+1}+\frac{1}{n}}\left(\frac{\ell_{i,j}}{n}\right)^{-\alpha}\right]\\
	&\hspace{-40pt}\leq c_{p,\alpha}^{L}\left[ p+2 +\zeta(\alpha)\right].
	\end{align*}
	
	The bound for the 1-norm
	$$
	\|n^{-\alpha}A_{n}^L\|_1 = n^{-\alpha}\max_{j=2\ldots,n+p-1}\sum_{i=2}^{n+p-1}|(A_n^{L})_{i-1,j-1}|
	$$
	can be shown with a similar line of arguments, by observing that for any fixed $j$,
	\begin{itemize}  
		\item the number of indices in $\{i: \xi_j<\eta_i\leq \xi_{j+p+1}+\frac{1}{n} \}$ is bounded by $2p$;
		\item if $\eta_i> \xi_{j+p+1}+\frac{1}{n}$, then 
		\begin{equation*}
		\eta_i-\xi_{j+p+1}\geq \frac{\ell_{i,j}}{n},
		\end{equation*}
		where $\ell_{i,j}:=\lfloor n(\eta_i-\xi_{j+p+1})\rfloor$. Note that 
		$\ell_{i,j}\geq 1$ for all $i$ and $\ell_{i+1,j}=\ell_{i,j}+1$ for $p+1\leq i \leq n-1$, so
		$$
		\sum_{i:\,\eta_i> \xi_{j+p+1}+\frac{1}{n}} (\ell_{i,j})^{-\alpha}\leq 2(p-1)+\sum_{\ell=1}^\infty\ell^{-\alpha}=2(p-1)+\zeta(\alpha).
		$$
	\end{itemize}
	
	Finally, the bound for the spectral norm follows from the inequality
	\begin{equation*}
	\|n^{-\alpha}A_n^L\|_2\leq \sqrt{\|n^{-\alpha}A_n^L\|_\infty\|n^{-\alpha}A_n^L\|_1}
	\end{equation*}
	and the above results for the infinity-norm and 1-norm.
\end{proof}

A similar reasoning to the one adopted in the previous lemmas brings us to the following result. 

\begin{lemma}\label{lem:AR_norm}
	Let $A_{n}^R$ be defined as in \eqref{eq:ALR}. We have
	\begin{equation*}
	\|n^{-\alpha}A_{n}^R\|_q \leq C_{p,\alpha}^{R}, \quad q\in\{1,2,\infty\},
	\end{equation*}
	where $C_{p,\alpha}^{R}$ is a constant depending on $p$ and $\alpha$.
\end{lemma}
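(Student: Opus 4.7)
The plan is to reduce the estimate for $A_n^R$ to the one just proved for $A_n^L$ by exploiting the reflection symmetry $x\mapsto 1-x$ of the setup. Set $m:=n+p-2$. The uniform knots, the B-splines and the Greville abscissae satisfy
\[
\xi_k \;=\; 1-\xi_{2p+n+2-k},\qquad N^p_{j+1}(1-x) \;=\; N^p_{n+p-j}(x),\qquad 1-\eta_{i+1} \;=\; \eta_{n+p-i},
\]
and the substitution $y=1-\tilde y$ in \eqref{eq:fractional-infty} immediately yields $\Ddr{\alpha} f(x) = \Ddl{\alpha}[f(1-\cdot)](1-x)$ for any function $f$ supported in $[0,1]$. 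Applying these identities to the definition \eqref{eq:ALR} gives
\[
(A_n^R)_{i,j} \;=\; \Ddr{\alpha} N^p_{j+1}(\eta_{i+1}) \;=\; \Ddl{\alpha} N^p_{n+p-j}(\eta_{n+p-i}) \;=\; (A_n^L)_{m+1-i,\, m+1-j},
\]
so that $A_n^R = J A_n^L J$, where $J$ is the $m\times m$ exchange (anti-diagonal permutation) matrix. Since $J$ is an orthogonal involution, $\|n^{-\alpha}A_n^R\|_q = \|n^{-\alpha}A_n^L\|_q$ for $q\in\{1,2,\infty\}$, and the claim follows from Lemma~\ref{lem:AL_norm} with $C_{p,\alpha}^R:=C_{p,\alpha}^L$.

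A longer but strictly parallel route is to mirror the proofs of Lemmas~\ref{lem:elem-bound-AL} and~\ref{lem:AL_norm} verbatim. The elementwise bound would read
\[
|(A_n^R)_{i-1,j-1}| \;\leq\;
\begin{cases}
0, & \eta_i \ge \xi_{j+p+1},\\[0.1cm]
c_{p,\alpha}^R\, n^\alpha, & \xi_j-\tfrac{1}{n}\le \eta_i<\xi_{j+p+1},\\[0.1cm]
c_{p,\alpha}^R\,(\xi_j-\eta_i)^{-\alpha}, & \eta_i<\xi_j-\tfrac{1}{n},
\end{cases}
\]
obtained by Taylor-expanding $(y-\eta_i)^{1-\alpha}$ at $y=\xi_{j+p+1}$ in the far-field case and using $\omega_{i,j}(y)-\eta_i>\xi_j-\eta_i$. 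The boundary correction that occurs at $j=2$ in the left case now surfaces at $j=n+p-1$, coming from $(N^p_{n+p-1})'(1)=-np$ and producing an extra term of order $np(1-\eta_i)^{1-\alpha}$. The $\infty$- and $1$-norm estimates follow from the same counting: for each fixed $i$ (respectively $j$) only $O(p)$ indices fall into the middle regime, while the distant contributions are controlled by $\zeta(\alpha)=\sum_{\ell\ge1}\ell^{-\alpha}<\infty$ for $\alpha\in(1,2)$; the spectral norm is then recovered from $\|\cdot\|_2\le\sqrt{\|\cdot\|_1\|\cdot\|_\infty}$.

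The only delicate point in either approach is tracking the boundary rows and columns where the B-spline support is truncated asymmetrically: in the symmetry route one must check that the two special columns $j=2$ of $A_n^L$ and $j=n+p-1$ of $A_n^R$ are genuinely swapped by the reflection, while in the direct route one has to reproduce the boundary-term computation of \eqref{first-matrix-element} with $(N^p_{n+p-1})'(1)$ in place of $(N^p_{2})'(0)$. No new analytic ingredient is required, which is precisely why the authors dispose of this lemma with a one-line remark.
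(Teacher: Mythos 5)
Your proposal is correct, and it in fact contains two valid proofs. The paper itself gives no argument beyond the sentence ``a similar reasoning to the one adopted in the previous lemmas brings us to the following result'', i.e., it takes your second, direct route: mirror Lemmas~\ref{lem:elem-bound-AL} and~\ref{lem:AL_norm} for $\Ddr{\alpha}$, with the three regimes reflected about the support of $N^p_{j+1}$, the Taylor expansion now anchored at $\xi_{j+p+1}$, and the boundary correction migrating from the column $j=2$ (where $(N^p_2)'(0^+)=np$) to the column $j=n+p-1$ (where $(N^p_{n+p-1})'(1^-)=-np$); your sketch of that route, including the counting of the $O(p)$ near-diagonal indices and the $\zeta(\alpha)$ tail, matches the paper's intent. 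Your primary route is genuinely different and slicker: the open uniform knot vector, the B-spline basis, the Greville abscissae and the two Riemann--Liouville derivatives are all exchanged by the reflection $x\mapsto 1-x$ (the three identities you list, as well as $\Ddr{\alpha}f(x)=\Ddl{\alpha}\bigl[f(1-\cdot)\bigr](1-x)$, are easily verified, and they hold also for the boundary B-splines since the repeated end knots are themselves symmetric), which gives $A_n^R=JA_n^LJ$ with $J$ the exchange matrix and hence $\|n^{-\alpha}A_n^R\|_q=\|n^{-\alpha}A_n^L\|_q$ for $q\in\{1,2,\infty\}$ directly from Lemma~\ref{lem:AL_norm}, with the explicit constant $C^R_{p,\alpha}=C^L_{p,\alpha}$. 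What the symmetry argument buys is a one-line reduction with no need to re-examine the boundary rows and columns; what it costs is that it is tied to the reflection-invariant setting (uniform open knots, Greville points), whereas the direct re-estimation would survive modifications that break the symmetry, such as non-uniform meshes or other choices of collocation points. The only cosmetic slip is that you invoke the half-axis definition \eqref{eq:fractional-infty} for the change of variables, while the entries in \eqref{eq:ALR} are defined through derivatives on $[0,1]$; for B-splines supported in $[0,1]$ the two coincide, so nothing is lost.
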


\begin{lemma}\label{lem:normR}
	Let $R_n^{p,\alpha}$ be defined as in \eqref{eq:toeplitz-part}. We have
	\begin{equation*}
	\|R_n^{p,\alpha}\|_2,\,\|R_n^{p,\alpha}\|_{1,\ast}\leq \widetilde{C}_{p,\alpha},
	\end{equation*}
	where $\widetilde{C}_{p,\alpha}$ is a constant depending on $p$ and $\alpha$.
\end{lemma}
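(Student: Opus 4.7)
The plan is to bound the spectral norm first and then leverage the rank structure of $R_n^{p,\alpha}$ to obtain the trace norm bound almost for free. By construction \eqref{eq:coeff_matrix} we can write $R_n^{p,\alpha} = n^{-\alpha}A_n^{p,\alpha} - T_n^{p,\alpha}$, so the triangle inequality gives
\[
\|R_n^{p,\alpha}\|_2 \leq \|n^{-\alpha}A_n^{p,\alpha}\|_2 + \|T_n^{p,\alpha}\|_2.
\]
The first term is controlled by applying the triangle inequality to the decomposition $A_n^{p,\alpha} = \tfrac{1}{2\cos(\pi\alpha/2)}(A_n^L+A_n^R)$ and then invoking Lemmas~\ref{lem:AL_norm} and~\ref{lem:AR_norm}, which give a constant (depending only on $p$ and $\alpha$) as an upper bound.

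For the second term, Theorem~\ref{thm:symbol} identifies $T_n^{p,\alpha}$ with the Toeplitz matrix $T_{n+p-2}(f^{p,\alpha})$, and it is a standard fact that $\|T_n(f)\|_2 \leq \|f\|_\infty$ for any real-valued bounded symbol $f$. Thus, it suffices to note that $f^{p,\alpha}$ is bounded on $[-\pi,\pi]$: by the symmetry $f^{p,\alpha}(\theta)=f^{p,\alpha}(-\theta)$ and the upper bound in \eqref{eq:bounds-frac-symb} of Theorem~\ref{thm:bounds-frac-symb}, on $[0,\pi]$ the factor $|\theta|^\alpha$ is bounded by $\pi^\alpha$ and the sinc/$\sin$ factors are bounded by $1$, which produces a uniform (in $n$) constant depending only on $p$ and $\alpha$. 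Combining the two estimates yields $\|R_n^{p,\alpha}\|_2 \leq \widetilde{C}_{p,\alpha}$.

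For the trace norm we exploit the low-rank nature of $R_n^{p,\alpha}$. Indeed, as noted in Section~\ref{sec:iga}, each of $R_n^L$ and $R_n^R$ has rank at most $4(p-1)$, so from \eqref{eq:toeplitz-part} we obtain $\mathrm{rank}(R_n^{p,\alpha})\leq 8(p-1)$. Using the elementary inequality $\|X\|_{1,\ast}\leq \mathrm{rank}(X)\,\|X\|_2$, which follows from the fact that $X$ has at most $\mathrm{rank}(X)$ nonzero singular values each bounded by $\|X\|_2$, we conclude
\[
\|R_n^{p,\alpha}\|_{1,\ast}\leq 8(p-1)\,\|R_n^{p,\alpha}\|_2 \leq 8(p-1)\,\widetilde{C}_{p,\alpha},
\]
which, after a possible enlargement of the constant, gives the claim. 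There is no real obstacle here: the work has already been done in Lemmas~\ref{lem:elem-bound-AL}--\ref{lem:AR_norm} and in the symbol analysis of Section~\ref{sec:symbol-properties}, and the present lemma is essentially a clean combination of these ingredients with the low-rank structure of the correction term.
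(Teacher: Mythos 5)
Your argument is correct and follows essentially the same route as the paper: triangle inequality on $R_n^{p,\alpha}=n^{-\alpha}A_n^{p,\alpha}-T_n^{p,\alpha}$, the bounds from Lemmas~\ref{lem:AL_norm} and~\ref{lem:AR_norm} together with $\|T_{n+p-2}(f^{p,\alpha})\|_2\leq\|f^{p,\alpha}\|_\infty$, and then $\|R_n^{p,\alpha}\|_{1,\ast}\leq\mathrm{rank}(R_n^{p,\alpha})\|R_n^{p,\alpha}\|_2$. The only cosmetic difference is your rank bound $8(p-1)$ versus the paper's $4(p-1)$ (the two corrections are supported on the same rows), which is immaterial for the statement.
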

\begin{proof}
	The relation in \eqref{eq:coeff_matrix} implies
	$$
	\|R_n^{p,\alpha}\|_2=\|n^{-\alpha}A_n^{p,\alpha}-T_n^{p,\alpha}\|_2\leq\|n^{-\alpha}A_n^{p,\alpha}\|_2+\|T_n^{p,\alpha}\|_2,$$ 
	and we recall from Section~\ref{sec:symbol-properties} that 
	$$
	\|T_n^{p,\alpha}\|_2=\|T_{n+p-2}(f^{p,\alpha})\|_2\leq\|f^{p,\alpha}\|_\infty<+\infty.
	$$ 
	Then, by Lemmas \ref{lem:AL_norm} and \ref{lem:AR_norm}, we arrive at 
	$$
	\|R_n^{p,\alpha}\|_2\leq \frac{1}{2\cos({\pi\alpha}/{2})}(C^L_{p,\alpha}+C^R_{p,\alpha})+\|f^{p,\alpha}\|_\infty.$$
	In addition, $\|R_n^{p,\alpha}\|_{1,\ast}\leq\text{rank}(R_n^{p,\alpha})\|R_n^{p,\alpha}\|_2$ and $\text{rank}(R_n^{p,\alpha})\le4(p-1)$. This completes the proof.
\end{proof}

We are now in a position to discuss the spectral distribution of $\{n^{-\alpha}A_n^{p,\alpha}\}_n$. 
\begin{theorem}\label{thm:spectral-A}
	Given $\{n^{-\alpha}A_n^{p,\alpha}\}_n$ with $A_n^{p,\alpha}$ as in \eqref{eq:coeff_matrix}, we have
	\begin{equation}\label{eq:distr-A}
	\{n^{-\alpha}A_n^{p,\alpha}\}_n\sim_\lambda(f^{p,\alpha},[-\pi,\pi]),
	\end{equation}
	where $f^{p,\alpha}$ is given in \eqref{eq:frac-symbol}.
\end{theorem}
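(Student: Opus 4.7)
The plan is to apply Corollary~\ref{cor:quasi-herm} directly to the splitting $n^{-\alpha}A_n^{p,\alpha} = T_n^{p,\alpha} + R_n^{p,\alpha}$ given in \eqref{eq:coeff_matrix}. All the hard work has already been done in the preceding lemmas; what remains is to check that the hypotheses of the corollary are satisfied and stitch the pieces together.

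First, I would set $X_n := T_n^{p,\alpha}$ and $Y_n := R_n^{p,\alpha}$, both matrices of size $d_n := n+p-2$, and verify assumption (a) of Theorem~\ref{thm:quasi-herm}. This is precisely the content of Proposition~\ref{prop:distr-T}: the Toeplitz matrix $T_n^{p,\alpha}=T_{n+p-2}(f^{p,\alpha})$ is symmetric (hence Hermitian) because $f^{p,\alpha}$ is real and even, and the spectral distribution $\{T_n^{p,\alpha}\}_n \sim_\lambda (f^{p,\alpha},[-\pi,\pi])$ follows from Theorem~\ref{szego}.

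Next, I would verify the trace-norm condition of Corollary~\ref{cor:quasi-herm}, namely $\|Y_n\|_{1,\ast}=o(\sqrt{d_n})$. By Lemma~\ref{lem:normR} we already know
\begin{equation*}
\|R_n^{p,\alpha}\|_{1,\ast} \leq \widetilde{C}_{p,\alpha},
\end{equation*}
where $\widetilde{C}_{p,\alpha}$ depends only on $p$ and $\alpha$, not on $n$. Since $d_n=n+p-2\to\infty$, we have $\widetilde{C}_{p,\alpha}=o(\sqrt{d_n})$, so the trace-norm hypothesis is fulfilled.

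With both hypotheses in hand, Corollary~\ref{cor:quasi-herm} immediately yields $\{X_n+Y_n\}_n=\{n^{-\alpha}A_n^{p,\alpha}\}_n\sim_\lambda(f^{p,\alpha},[-\pi,\pi])$, which is exactly \eqref{eq:distr-A}. The only nontrivial step in the whole chain is the uniform bound on the low-rank correction $R_n^{p,\alpha}$, but that difficulty has already been absorbed into Lemma~\ref{lem:normR} (which in turn rests on the delicate entrywise and induced-norm estimates of Lemmas~\ref{lem:elem-bound-AL}--\ref{lem:AR_norm} and the uniform boundedness of $\|T_{n+p-2}(f^{p,\alpha})\|_2\leq\|f^{p,\alpha}\|_\infty$). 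Consequently, the proof of the theorem itself reduces to citing Proposition~\ref{prop:distr-T}, Lemma~\ref{lem:normR}, and Corollary~\ref{cor:quasi-herm} in sequence.
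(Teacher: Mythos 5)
Your proposal is correct and follows exactly the same route as the paper's proof: apply Corollary~\ref{cor:quasi-herm} to the splitting $n^{-\alpha}A_n^{p,\alpha}=T_n^{p,\alpha}+R_n^{p,\alpha}$, with condition (a) supplied by Proposition~\ref{prop:distr-T} and the trace-norm hypothesis by the uniform bound of Lemma~\ref{lem:normR}. Your explicit remark that the constant bound $\|R_n^{p,\alpha}\|_{1,\ast}\leq\widetilde{C}_{p,\alpha}$ trivially gives $o(\sqrt{d_n})$ is a helpful clarification of a step the paper leaves implicit.
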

\begin{proof}
	We prove this result by applying Corollary~\ref{cor:quasi-herm} with $X_n=T_n^{p,\alpha}$ and $Y_n=R_n^{p,\alpha}$. We first note that, because of Proposition~\ref{prop:distr-T}, condition (a) of Theorem~\ref{thm:quasi-herm} is satisfied. The other conditions in Corollary~\ref{cor:quasi-herm} hold by Lemma~\ref{lem:normR}, which proves the result \eqref{eq:distr-A}.
\end{proof}

\begin{remark} \label{rem:mim-A}
	Thanks to Theorem~\ref{thm:spectral-A}, the matrices $n^{-\alpha}A_n^{p,\alpha}$ and $T_n^{p,\alpha}$ are asymptotically spectrally equivalent, possibly up to few outliers. As a consequence, the arguments given in Remark~\ref{rem:mim} apply unchanged when the aim is solving a linear system whose coefficient matrix is  $n^{-\alpha}A_n^{p,\alpha}$ instead of $T_n^{p,\alpha}$.
\end{remark}

\begin{remark}\label{rem:adv-rea}
	\cmag{Let $A_n^{p,\alpha,\gamma,\rho}$ be the coefficient matrix corresponding to the B-spline collocation discretization of the advection-diffusion-reaction problem
		\begin{equation*}
		\begin{cases}
		\frac{\dd^\alpha u(x)}{\dd|x|^\alpha}+\gamma u'(x)+\rho u(x)=s(x), & x\in\Omega,\\
		u(x)=0, & x\in\RR\backslash\Omega,
		\end{cases}
		\end{equation*}
		with $\rho>0$ and $\gamma\in\RR$.
		From Theorem~\ref{thm:spectral-A}, in combination with Theorem~\ref{thm:quasi-herm} and \cite[Lemma~4.1]{DGMSS},
		we immediately deduce that}
	\begin{equation*}
	\{n^{-\alpha}A_n^{p,\alpha,\gamma,\rho}\}_n\sim_\lambda(f^{p,\alpha},[-\pi,\pi]).
	\end{equation*}
\end{remark}

\section{Numerical experiments} \label{sec:numerics}
In the following, we verify the spectral results obtained in Sections \ref{sec:symbol-properties} and \ref{sec:symbol} through several numerical experiments. We also provide a numerical study of the approximation behavior of the proposed polynomial B-spline collocation method for an arbitrary degree $p$. 

Let us start by illustrating that
\begin{itemize}
	\item the symbol $f^{p,\alpha}$ has a single zero at $0$ of order $\alpha$ and it presents an exponential decay to zero at $\pi$ for increasing $p$;
	\item the symbol $f^{p,\alpha}$ satisfies the bounds in \eqref{eq:bound-high-freq} for odd $p$, and the bound in \eqref{eq:bound-high-freq-even} for even $p$;
	\item relations \eqref{eq:distr-T} and \eqref{eq:distr-A} hold.
\end{itemize}
\cmag{Note that it suffices to consider the interval $[0,\pi]$ due to the symmetry of $f^{p,\alpha}$; see Theorem~\ref{thm:bounds-frac-symb}.}

Figure~\ref{fig:symbol} shows that, independently of $p$, the symbol $f^{p,\alpha}$ has a single zero at $0$ and the order of such zero increases up to 2 as $\alpha$ tends to 2. On the other hand, $f^{p,\alpha}$ presents a decay at $\pi$ as $p$ increases. We observe that such decay becomes faster when $\alpha$ decreases to 1, in accordance with Remark \ref{rem:alpha}. 
\begin{figure}[t]
	\centering
	\begin{subfigure}[$p=3$]
		{		\includegraphics[width=0.34\linewidth]{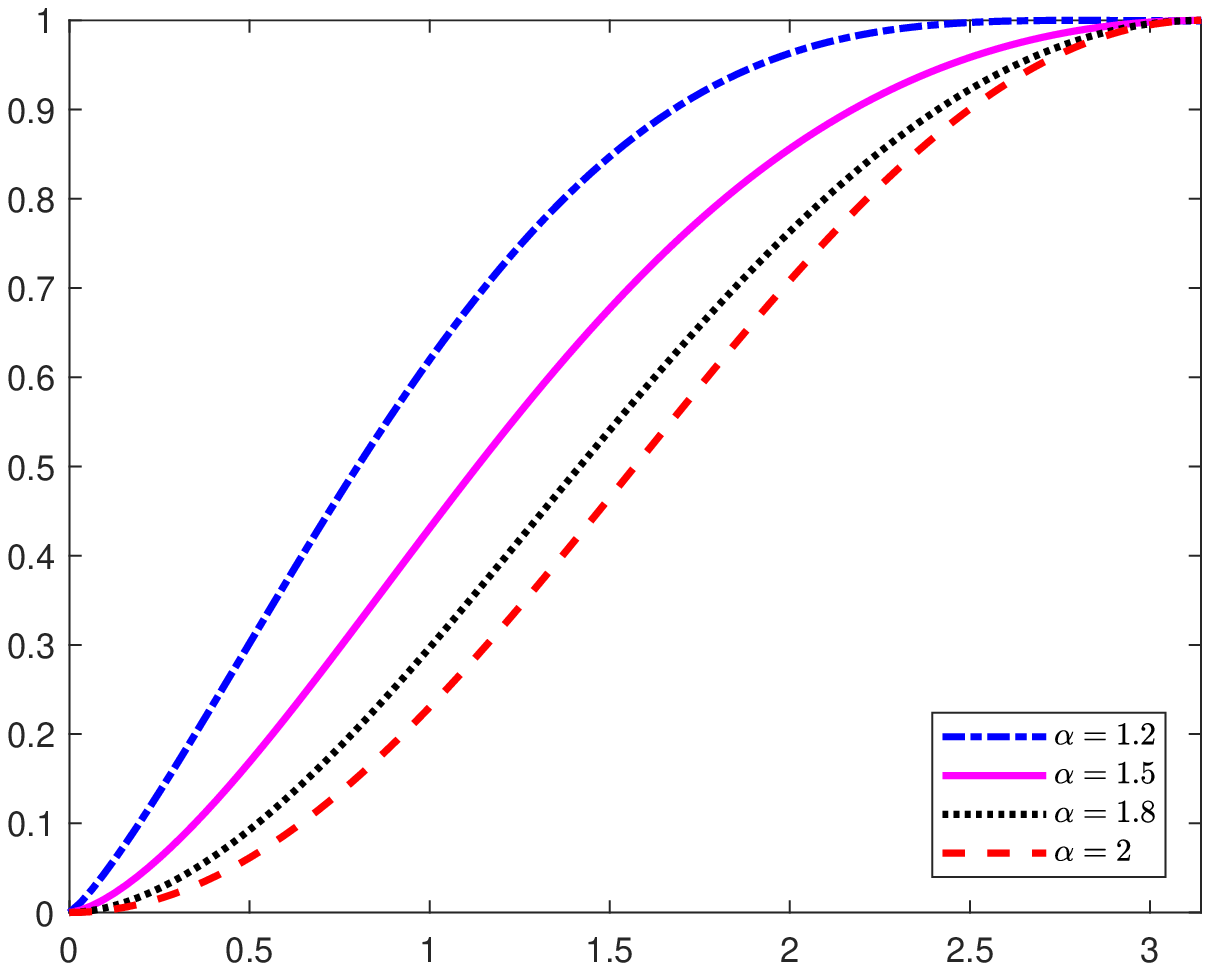}}
	\end{subfigure}\hspace*{-0.7cm}
	\begin{subfigure}[$p=5$]
		{	\includegraphics[width=0.34\linewidth]{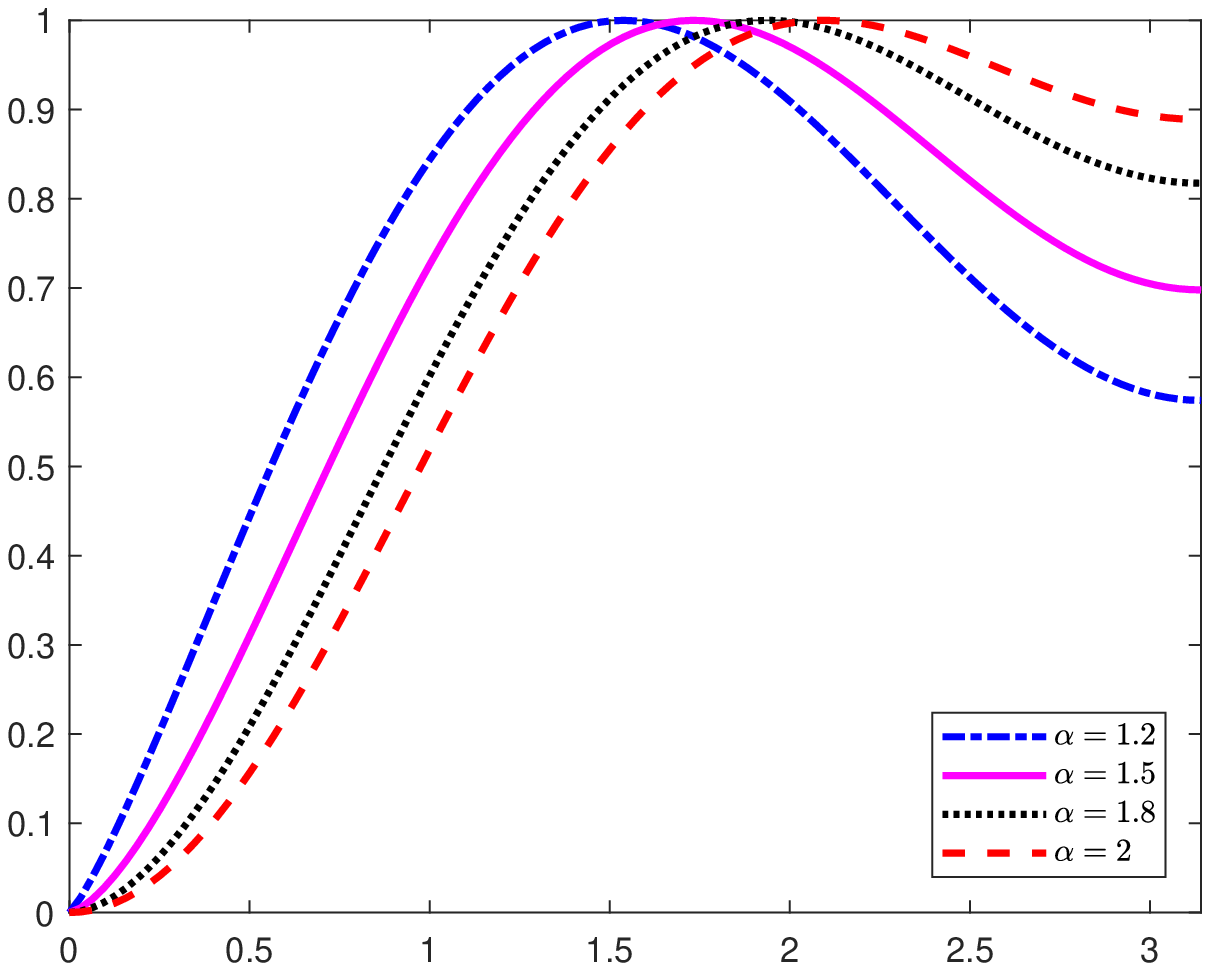}}
	\end{subfigure}\hspace*{-0.7cm}
	\begin{subfigure}[$p=8$]
		{		\includegraphics[width=0.34\linewidth]{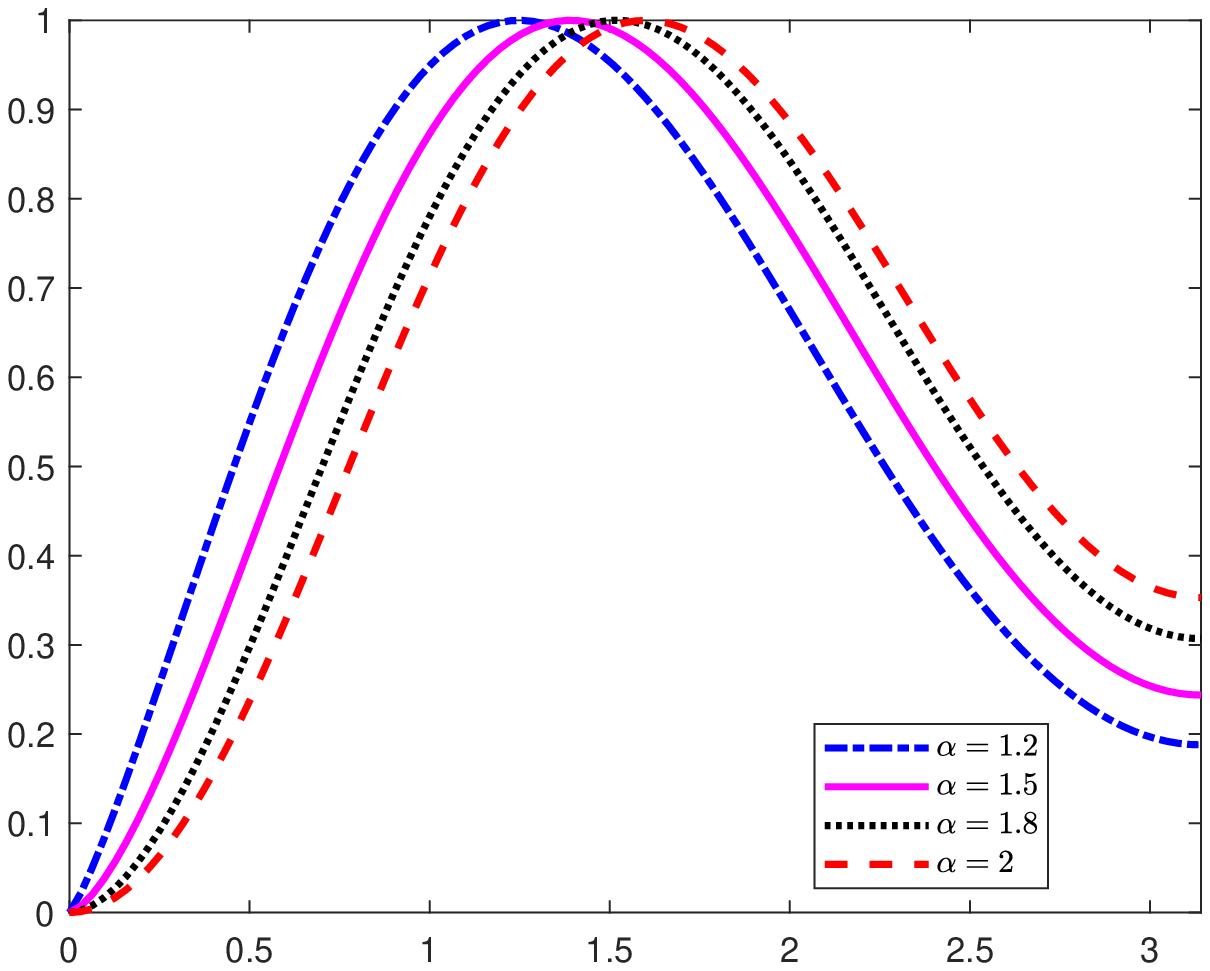}}
	\end{subfigure}
	\caption{Plot of $f^{p,\alpha}$ for $p=3,5,8$ and $\alpha=1.2,1.5,1.8,2$.} 
	\label{fig:symbol}
\end{figure}

In Figure~\ref{fig:bounds} we show that, fixing $\alpha=1.3$, the bounds in \eqref{eq:bound-high-freq} hold for $p=3$, and the one in \eqref{eq:bound-high-freq-even} holds for $p=4$. Observe that, despite relation \eqref{eq:bound-high-freq-even} is theoretically proven to be true for all $\theta\in[a,\pi]$, $a=\left(\frac{\pi^4}{48}\right)^{1/\alpha}$, it actually also holds for all $\theta\in[1,a]$, i.e., for all values on the left of the black vertical line $\theta=a$ shown in Figure \ref{fig:boundsb}.

\begin{figure}[t]
	\centering
	\begin{subfigure}[$p=3$]
		{		\includegraphics[width=0.46\linewidth]{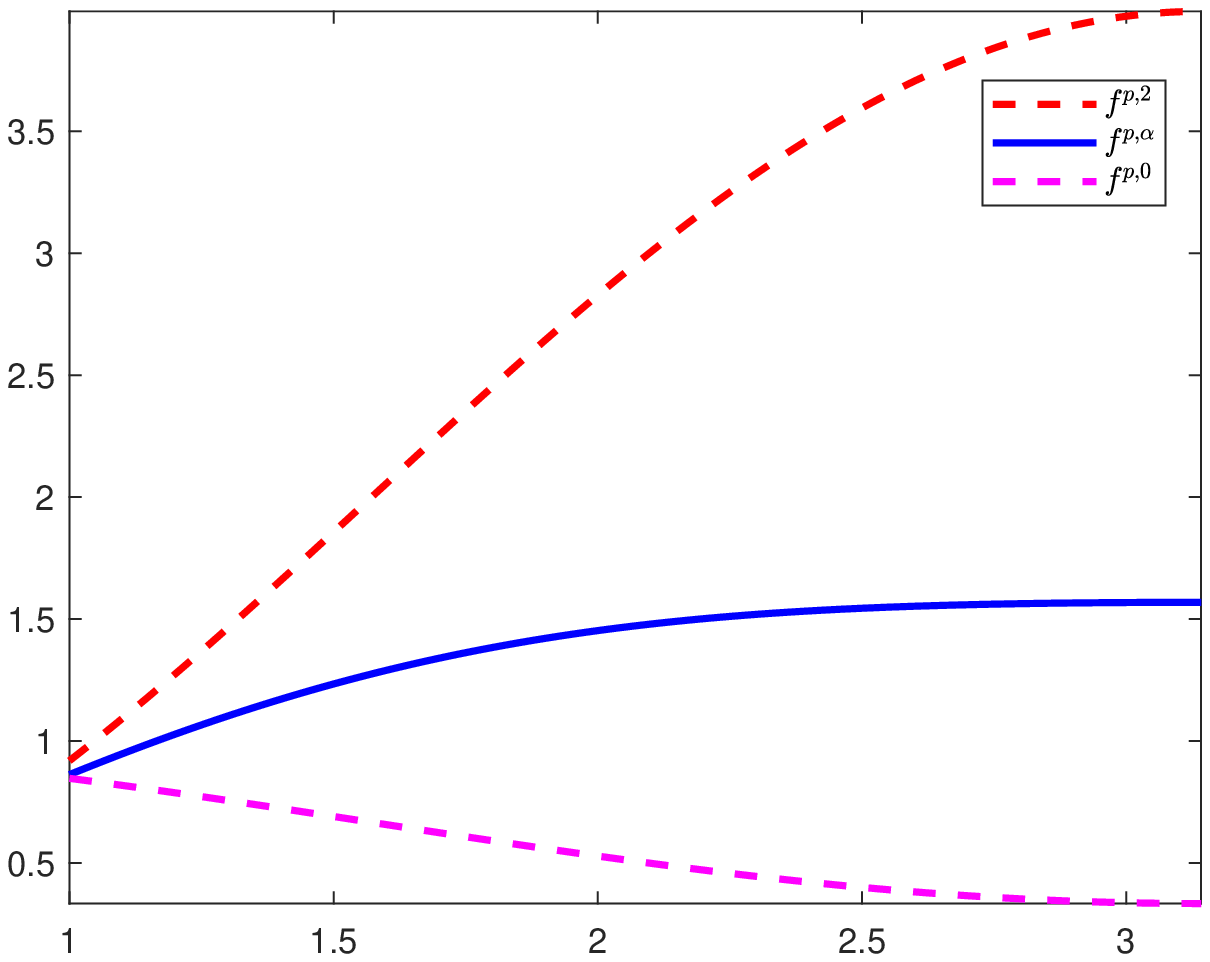}}
	\end{subfigure}
	\begin{subfigure}[$p=4$\label{fig:boundsb}]
		{		\includegraphics[width=0.46\linewidth]{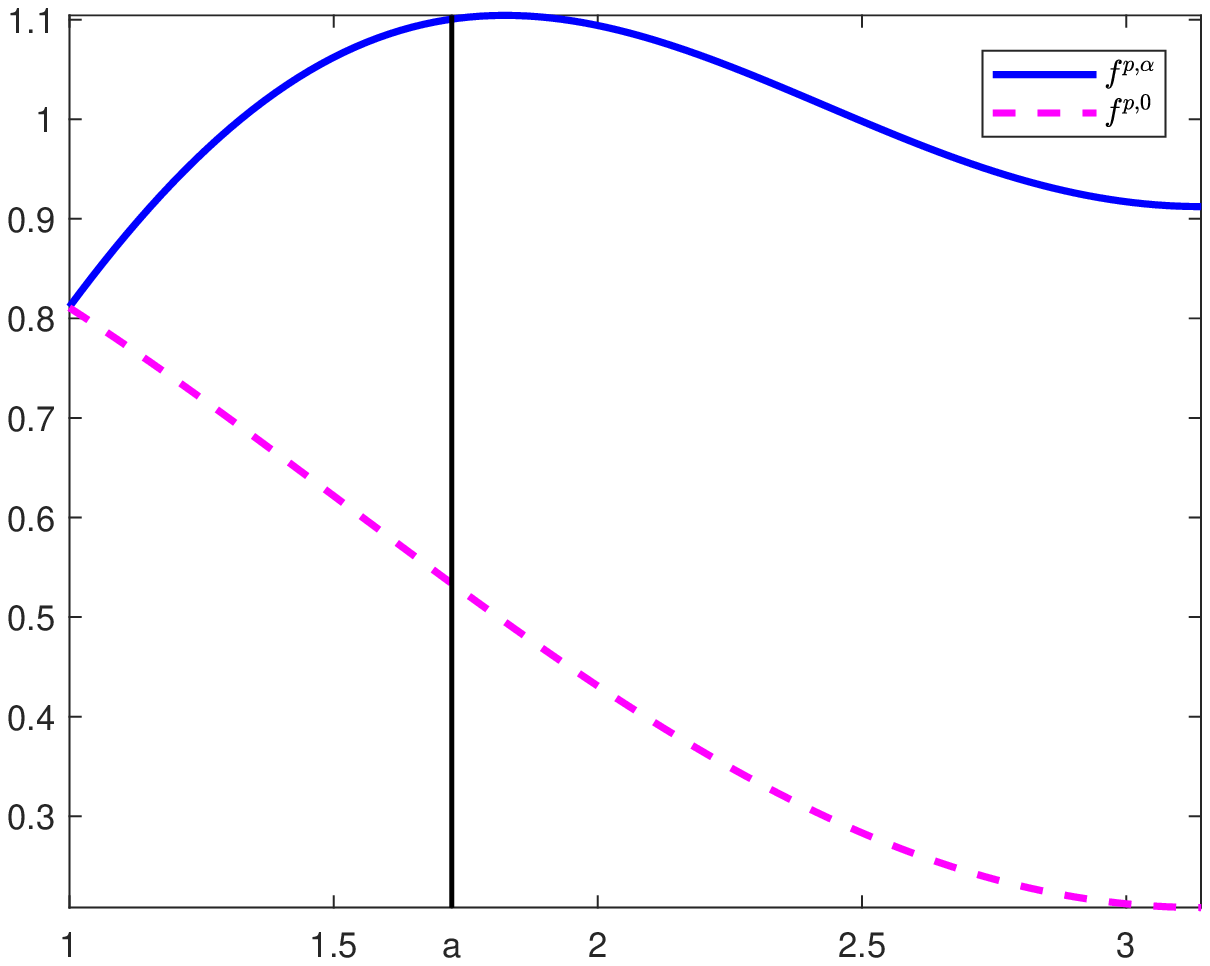}}
	\end{subfigure}
	\caption{(a) Check of the bound in \eqref{eq:bound-high-freq} which is valid for odd $p$, and (b) check of the bound in \eqref{eq:bound-high-freq-even} which is valid for even $p$. In both cases $\alpha$ has been fixed to $1.3$.}\label{fig:bounds}
\end{figure}

In order to numerically verify that relations \eqref{eq:distr-T} and \eqref{eq:distr-A} hold, for fixed $n$, $p$, we define the following equispaced grid on $[0,\pi]$:
\begin{equation*}
\varGamma:=\left\{\theta_{k}:=\frac{k\pi}{n+p-2}: k={1}, \dots,n+p-2\right\}.
\end{equation*}
Then, we compare the sampling of $f^{p,\alpha}$ on $\varGamma$ with the eigenvalues of both $T_n^{p,\alpha}$ and $n^{-\alpha}A_{n}^{p,\alpha}$. Both eigenvalues and sampling values have been ordered in ascending way. In all the numerical experiments, the entries of the coefficient matrix $A_n^{p,\alpha}$ have been computed using the Gauss-Jacobi-type quadrature rules introduced in \cite{Pang}. In Figure~\ref{fig:compeig1dot8_100} we fix $p=3$, $n=63$ and vary $\alpha\in\{1.2,1.8\}$. For both $T_n^{p,\alpha}$ and $n^{-\alpha}A_{n}^{p,\alpha}$ we experience a very good matching, which is in accordance with Proposition~\ref{prop:distr-T} and Theorem~\ref{thm:spectral-A}. 
However, we observe that in the case of $n^{-\alpha}A_{n}^{p,\alpha}$ there are few large eigenvalues that do not behave like the symbol; these are the \cmag{outliers and their number is independent of $n$.} As a further confirmation of Proposition~\ref{prop:distr-T} and Theorem~\ref{thm:spectral-A}, we obtained similar results also for $p=4$, $n=62$, and $\alpha\in\{1.2,1.8\}$; see Figure~\ref{fig:compeig100}.

We end this section by checking how the approximation order of the considered polynomial B-spline collocation method behaves with respect to $p$ for \cmag{smooth solutions} of problem \eqref{eq:FDE}.
More precisely, in Tables \ref{tab:pol33}--\ref{tab:sin} we fix the source function $s(x)$ such that the exact solution of \eqref{eq:FDE} is given by 
\begin{itemize}
	\item $u(x)=x^3(1-x)^3$, and
	\item $u(x)=\sin(\pi x^2)$,
\end{itemize}
respectively. Then, by doubling $n$ repeatedly, we show the infinity-norm of the corresponding errors and the convergence orders for varying $p$ and $\alpha$. The infinity-norm of the error is computed by taking the maximum value of the error sampled in 1024 points uniformly distributed over $[0,1]$. 
\cmag{In the case of standard (non-fractional) diffusion problems, we know that the approximation order for smooth solutions is $p$ for even $p$, and $p-1$ for odd $p$; see \cite{ABHRS}. In the fractional case, we observe a dependency of the approximation order on $\alpha$ that seems to vary as $p+2-\alpha$ for even $p$, and as $p+1-\alpha$ for odd $p$.}

\begin{figure}[t]
	\vspace*{-0.1cm}
	\centering
	\begin{subfigure}[$T_n^{p,\alpha}$]
		{		\includegraphics[width=0.46\linewidth]{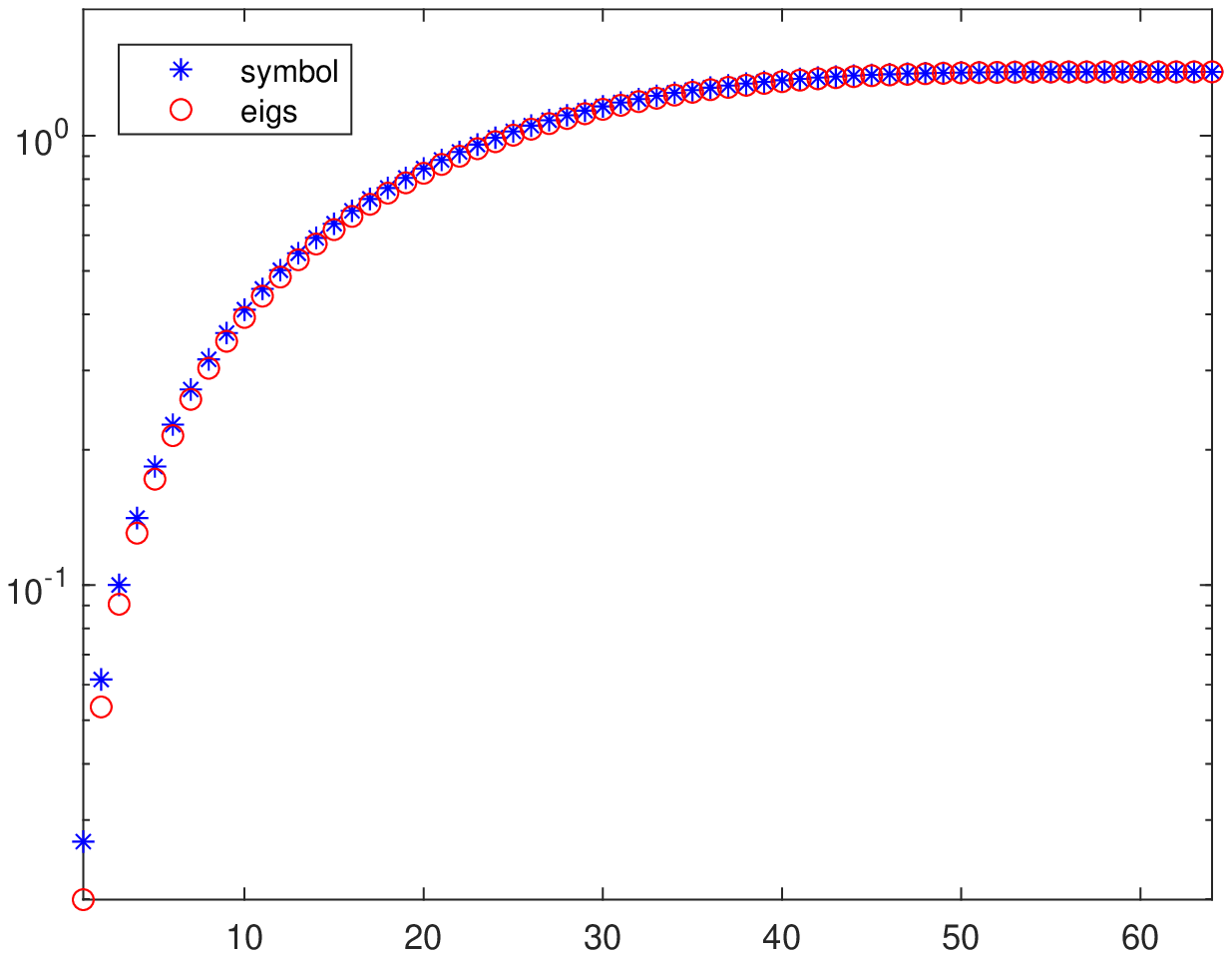}}
	\end{subfigure}
	\begin{subfigure}[$n^{-\alpha}A_{n}^{p,\alpha}$]
		{		\includegraphics[width=0.46\linewidth]{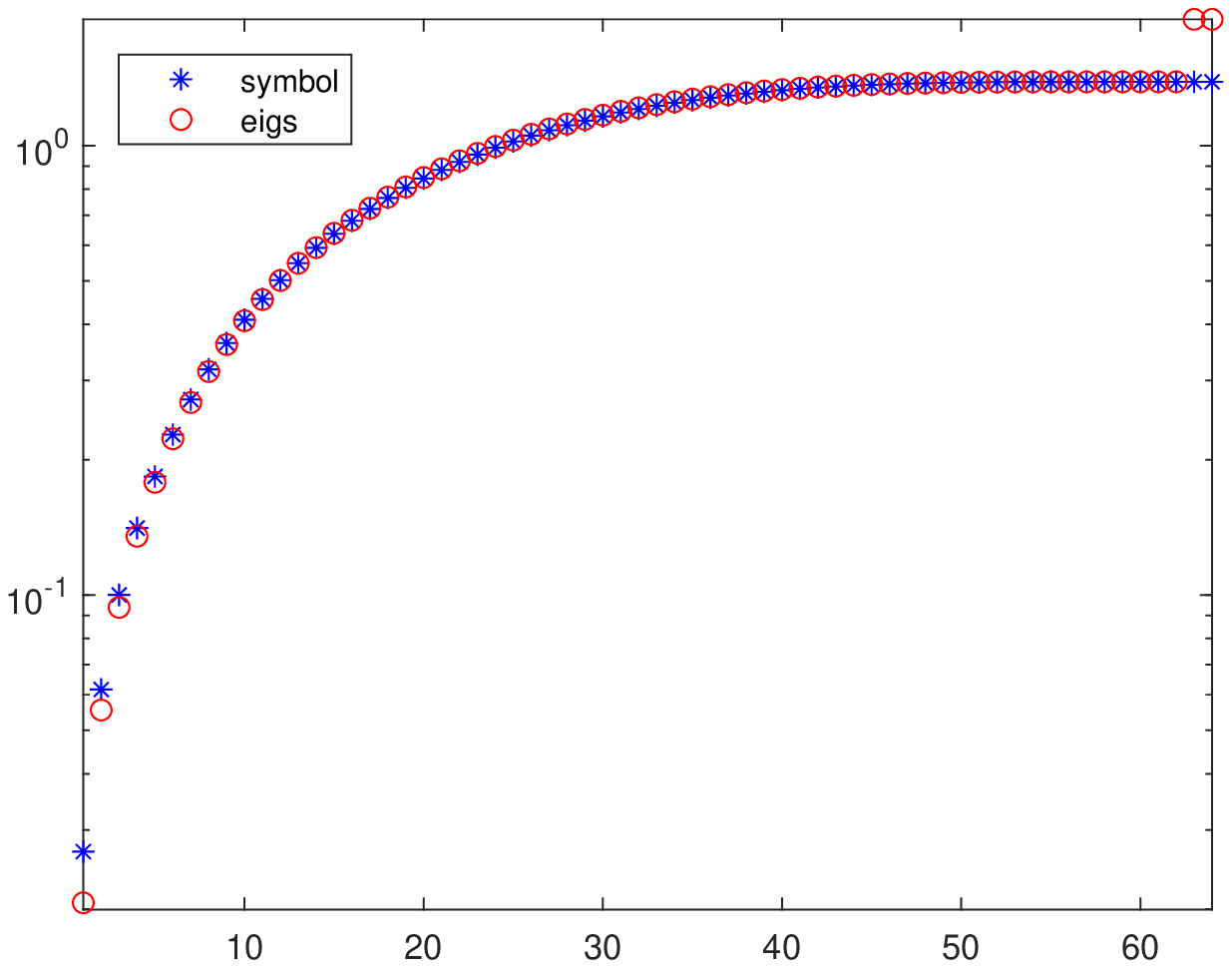}}
	\end{subfigure}
	\\
	\begin{subfigure}[$T_n^{p,\alpha}$]
		{		\includegraphics[width=0.46\linewidth]{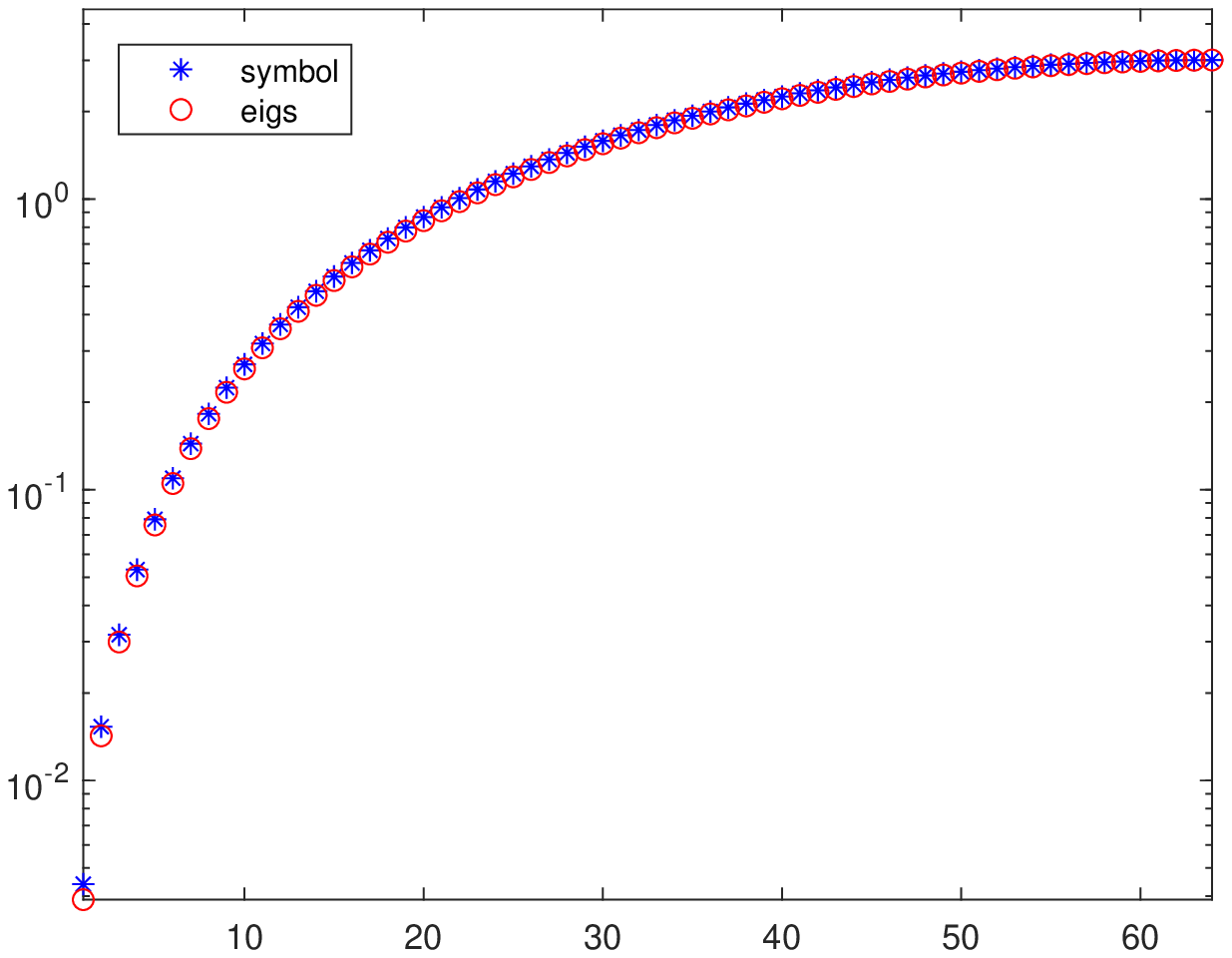}}
	\end{subfigure}
	\begin{subfigure}[$n^{-\alpha}A_{n}^{p,\alpha}$]
		{		\includegraphics[width=0.46\linewidth]{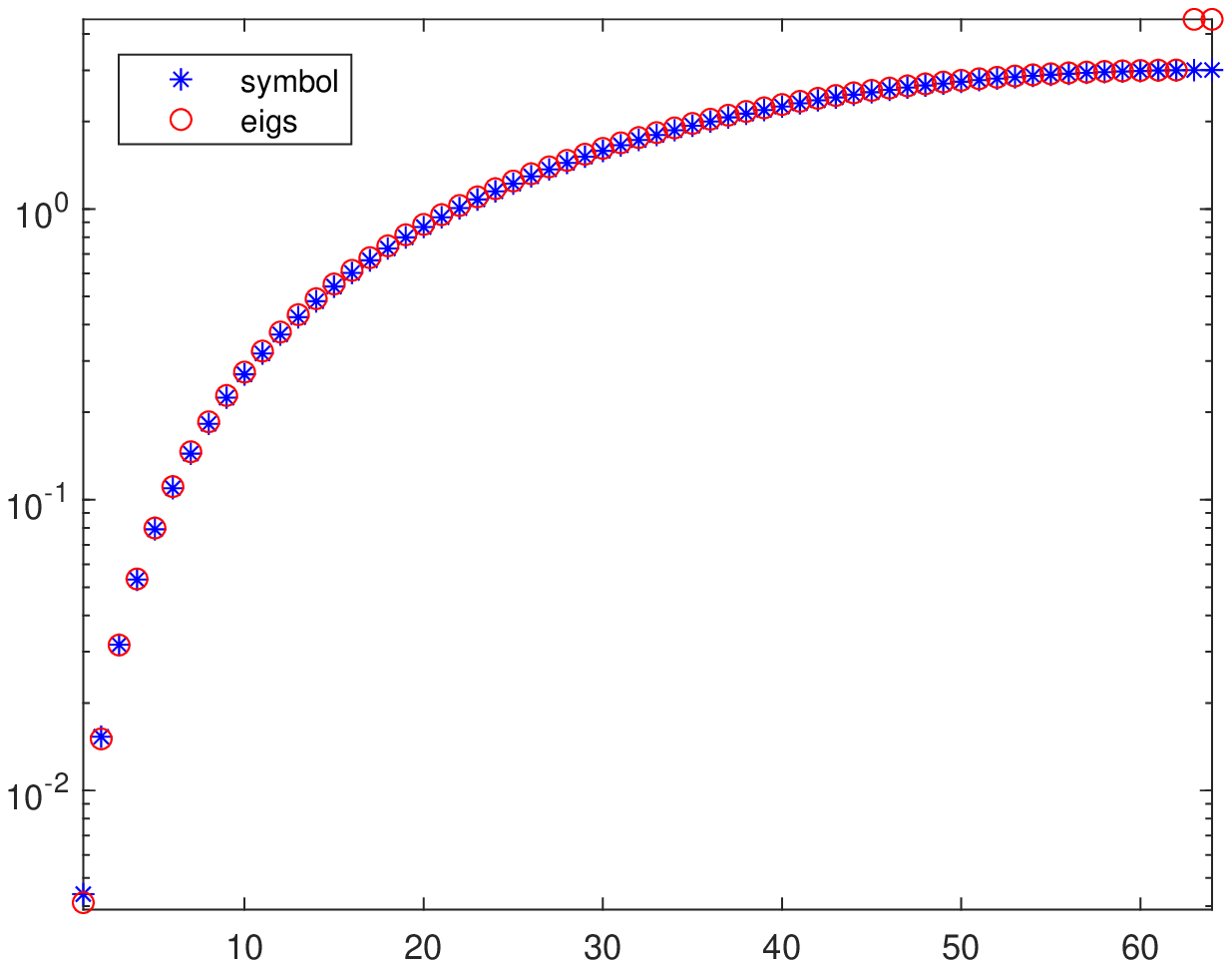}}
	\end{subfigure}
	\caption{Comparison of the eigenvalues of $T_n^{p,\alpha}$ and $n^{-\alpha}A_{n}^{p,\alpha}$ (\textcolor{red}{$\mycircle$}) with a uniform sampling of $f^{p,\alpha}$ on $\varGamma$, ordered in ascending way (\textcolor{blue}{$\ast$}), for $\alpha=1.2$ (top row) and $\alpha=1.8$ (bottom row), $n=63$, $p=3$.} 
	\label{fig:compeig1dot8_100}
\end{figure}
\begin{figure}[t]
	\vspace*{-0.1cm}
	\centering
	\begin{subfigure}[$T_n^{p,\alpha}$]
		{		\includegraphics[width=0.46\linewidth]{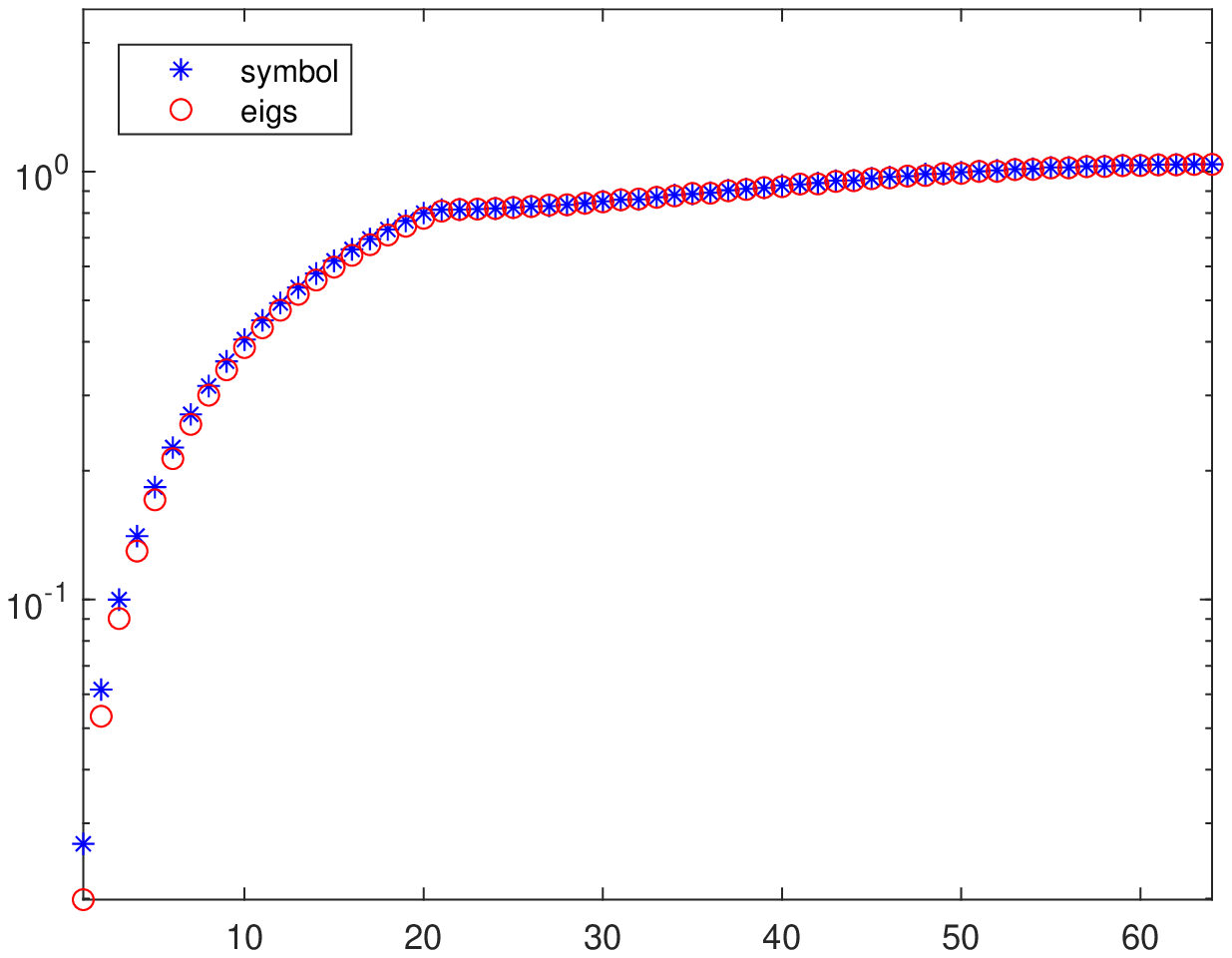}}
	\end{subfigure}
	\begin{subfigure}[$n^{-\alpha}A_{n}^{p,\alpha}$]
		{		\includegraphics[width=0.46\linewidth]{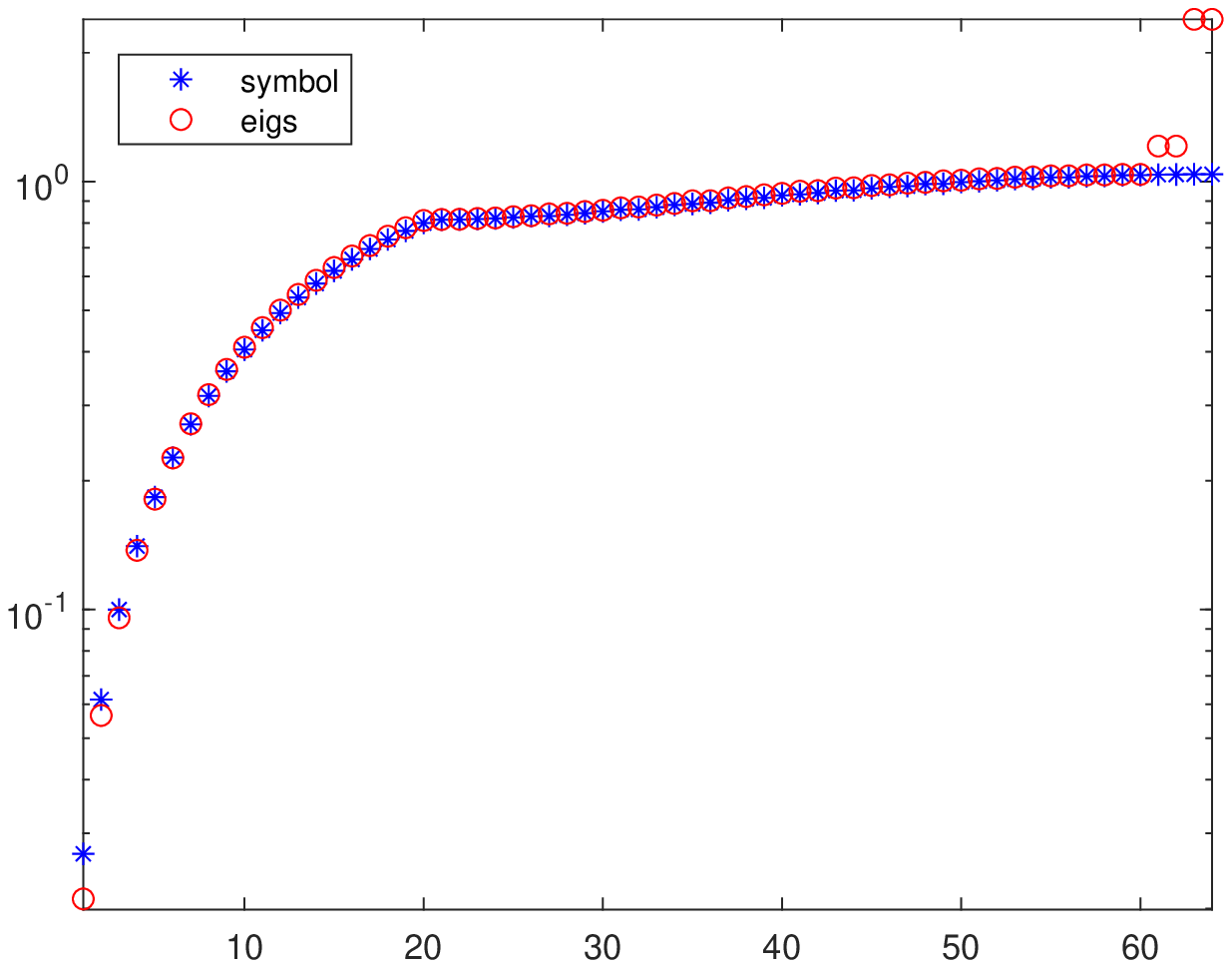}}
	\end{subfigure}
	\\
	\begin{subfigure}[$T_n^{p,\alpha}$]
		{		\includegraphics[width=0.46\linewidth]{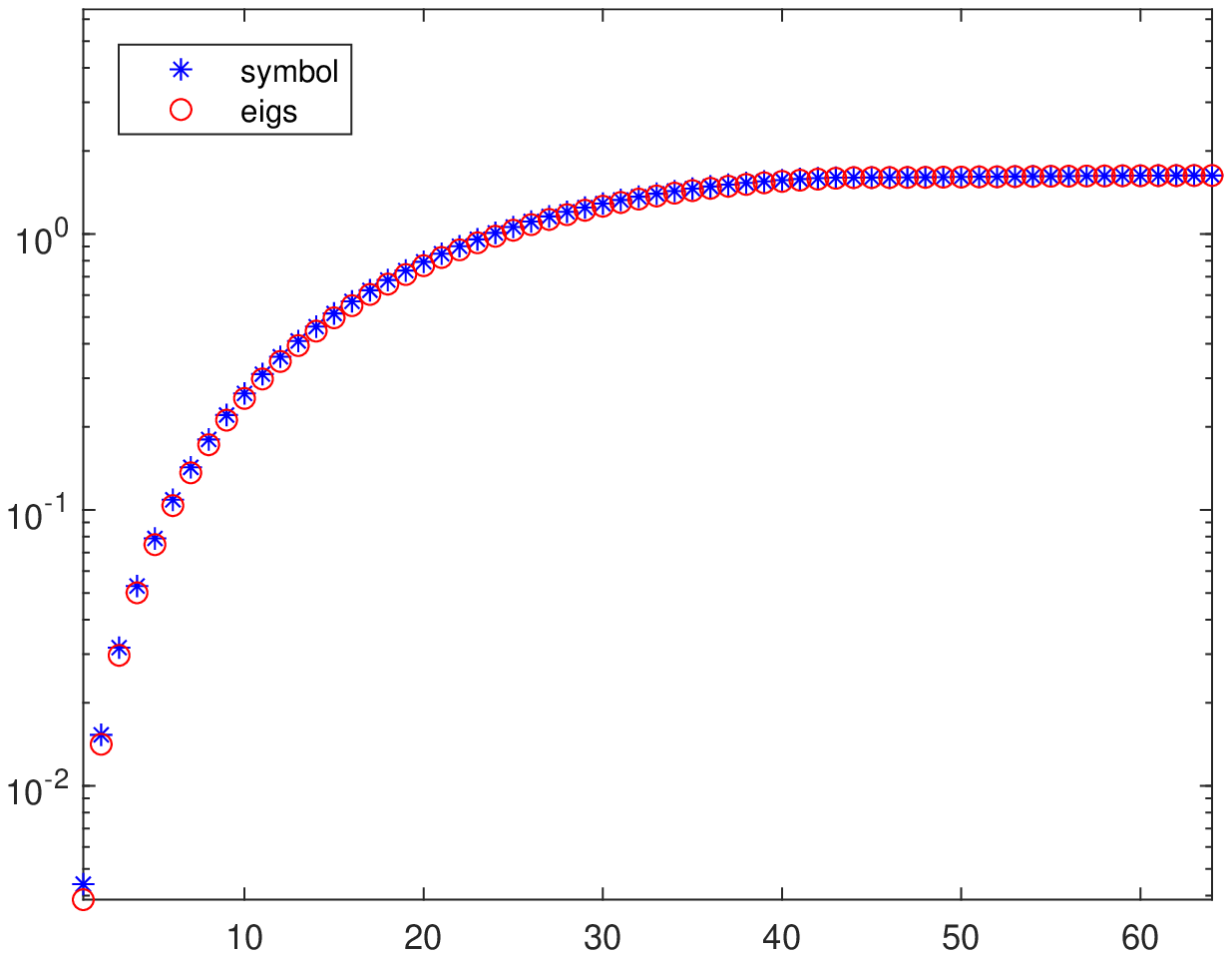}}
	\end{subfigure}
	\begin{subfigure}[$n^{-\alpha}A_{n}^{p,\alpha}$]
		{		\includegraphics[width=0.46\linewidth]{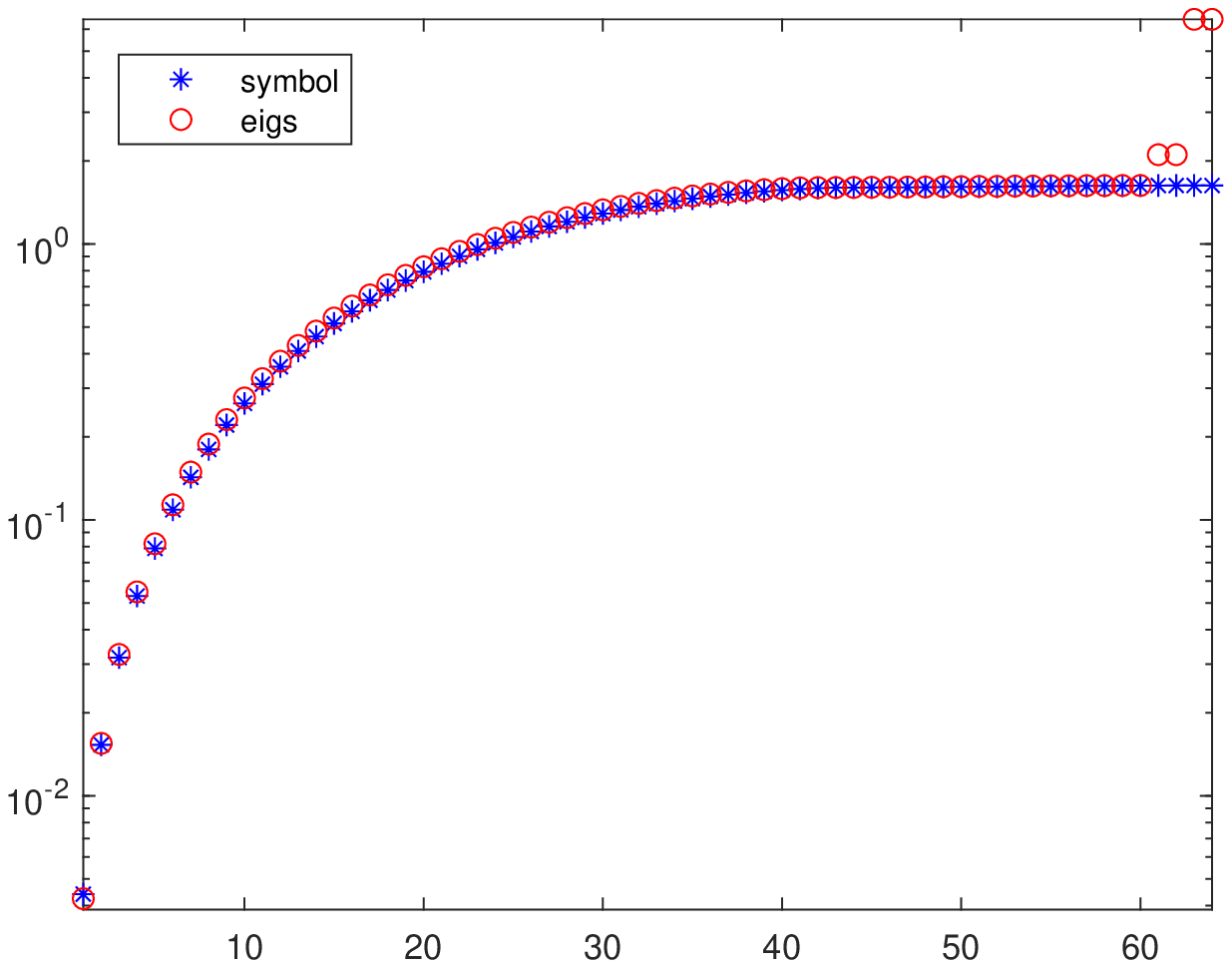}}
	\end{subfigure}
	\caption{Comparison of the eigenvalues of $T_n^{p,\alpha}$ and $n^{-\alpha}A_{n}^{p,\alpha}$ (\textcolor{red}{$\mycircle$}) with a uniform sampling of $f^{p,\alpha}$ on $\varGamma$, ordered in ascending way (\textcolor{blue}{$\ast$}), for $\alpha=1.2$ (top row) and $\alpha=1.8$ (bottom row), $n=62$, $p=4$.} 
	\label{fig:compeig100}
\end{figure}

\begin{table}[htb]
	\scriptsize
	\centering
	\begin{tabular}{|c||c|cc|cc|cc|cc|}
		\hline
		& & & & & & &&&  \\
		\multirow{3}{*}{$\alpha$} & \multirow{3}{*}{$n$}& \multicolumn{2}{|c|}{$p=2$} &\multicolumn{2}{c|}{$p=3$} &\multicolumn{2}{c|}{$p=4$} &\multicolumn{2}{c|}{$p=5$}  \\ \cline{3-10}
		& & & & & & &&&  \\
		& & Error & Order & Error & Order &Error & Order &Error & Order  \\ \hline\hline
		& & & & & & &&&  \\
		\multirow{6}{*}{1.2} 
		& 4 & 1.3146e-03 &  & 1.1197e-03 &  & 2.6802e-04 &  & 4.8556e-05 & \\
		& 8 & 1.5675e-04 & 3.07 & 9.8810e-05 & 3.50 & 1.0317e-05 & 4.70 & 3.4230e-06 & 3.83\\
		& 16 & 2.4941e-05 & 2.65 & 1.5622e-05 & 2.66 & 4.1887e-07 & 4.62 & 1.3853e-07 & 4.63\\
		& 32 & 3.5227e-06 & 2.82 & 2.4433e-06 & 2.68 & 1.6226e-08 & 4.69 & 5.1403e-09 & 4.75\\
		& 64 & 5.0507e-07 & 2.80 & 3.6711e-07 & 2.73 & 5.9986e-10 & 4.76 & 2.1670e-10 & 4.57\\
		& & & $\approx$2.8 &  & $\approx$2.8 & &  $\approx$4.8 & & $\approx$4.8\\
		
		\hline
		& & & & & & &&& \\
		\multirow{5}{*}{1.5} 
		
		& 4 & 1.6170e-03 & & 1.8701e-03 &  & 3.4358e-04 &  & 8.0567e-05 & \\
		& 8 & 1.7117e-04 & 3.24 & 2.0365e-04 & 3.20 & 1.9552e-05 & 4.14 & 7.4745e-06 & 3.43\\
		& 16 & 3.1719e-05 & 2.43 & 2.8530e-05 & 2.84 & 1.0245e-06 & 4.25 & 3.7577e-07 & 4.31\\
		& 32 & 5.8828e-06 & 2.43 & 5.7869e-06 & 2.30 & 4.9498e-08 & 4.37 & 1.7183e-08 & 4.45\\
		& 64 & 1.0458e-06 & 2.49 & 1.0661e-06 & 2.44 & 2.2995e-09 & 4.43 & 8.0703e-10 & 4.41\\
		& & &  $\approx$2.5 &  & $\approx$2.5& &  $\approx$4.5 & & $\approx$4.5\\
		\hline
		& & & & & & &&& \\
		\multirow{5}{*}{1.8} 
		& 4 & 1.9908e-03 &  & 3.1774e-03 &  & 4.3396e-04 & & 1.3425e-04 & \\
		& 8 & 2.5091e-04 & 2.99 & 4.4181e-04 & 2.85 & 3.6073e-05 & 3.59 & 1.5905e-05 & 3.08\\
		& 16 & 4.2953e-05 & 2.55 & 6.8611e-05 & 2.69 & 2.4045e-06 & 3.91 & 9.8386e-07 & 4.01\\
		& 32 & 9.3400e-06 & 2.20 & 1.3336e-05 & 2.36 & 1.4401e-07 & 4.06 & 5.5249e-08 & 4.15\\
		& 64 & 2.0702e-06 & 2.17 & 3.0292e-06 & 2.14 & 8.2251e-09 & 4.13 & 3.0499e-09 & 4.18\\
		& & &  $\approx$2.2 &  &$\approx$2.2 & & $\approx$4.2 & & $\approx$4.2\\
		
		\hline
		\hline
	\end{tabular}
	\caption{Errors and convergence orders of the proposed B-spline collocation method for problem \eqref{eq:FDE} when $u(x)=x^3(1-x)^3$.}\label{tab:pol33}
\end{table}

\begin{table}[htb]
	\scriptsize
	\centering
	\begin{tabular}{|c||c|cc|cc|cc|cc|}
		\hline
		& & & & & & &&&  \\
		\multirow{3}{*}{$\alpha$} & \multirow{3}{*}{$n$}& \multicolumn{2}{|c|}{$p=2$} &\multicolumn{2}{c|}{$p=3$} &\multicolumn{2}{c|}{$p=4$} &\multicolumn{2}{c|}{$p=5$}  \\ \cline{3-10}
		& & & & & & &&&  \\
		& & Error & Order & Error & Order &Error & Order &Error & Order  \\ \hline\hline
		& & & & & & &&&  \\
		\multirow{6}{*}{1.2} 
		& 4 & 4.0099e-02 &  & 1.5948e-02 &  & 6.1393e-03 &  & 1.9341e-03 & \\
		& 8 & 8.4523e-03 & 2.25 & 4.6043e-03 & 1.79 & 2.5317e-04 & 4.60 & 1.0271e-04 & 4.23\\
		& 16 & 1.1497e-03 & 2.88 & 7.8372e-04 & 2.55 & 7.9503e-06 & 4.99 & 2.5175e-06 & 5.35\\
		& 32 & 1.6423e-04 & 2.81 & 1.1786e-04 & 2.73 & 2.5619e-07 & 4.96 & 7.1641e-08 & 5.14\\
		& 64 & 2.3468e-05 & 2.81 & 1.7096e-05 & 2.79 & 9.5594e-09 & 4.74 & 1.0289e-08 & 2.80\\
		& & & $\approx$2.8 &  & $\approx$2.8& &$\approx$ 4.8 & & $\approx$4.8\\
		
		\hline
		& & & & & & &&& \\
		\multirow{5}{*}{1.5} 
		& 4 & 4.2457e-02 &  & 2.4735e-02 &  & 7.7604e-03 &  & 2.6753e-03 & \\
		& 8 & 1.0378e-02 & 2.03 & 7.9809e-03 & 1.63 & 4.3612e-04 & 4.15 & 1.9027e-04 & 3.81\\
		& 16 & 1.7932e-03 & 2.53 & 1.7304e-03 & 2.21 & 1.7374e-05 & 4.65 & 6.3744e-06 & 4.90\\
		& 32 & 3.1466e-04 & 2.51 & 3.1905e-04 & 2.44 & 7.0999e-07 & 4.61 & 2.2599e-07 & 4.82\\
		& 64 & 5.5887e-05 & 2.49 & 5.7202e-05 & 2.48 & 2.9859e-08 & 4.57 & 7.5065e-09 & 4.91\\
		& & & $\approx$2.5 &  &$\approx$2.5 & & $\approx$4.5 & & $\approx$4.5\\
		
		\hline
		& & & & & & &&& \\
		\multirow{5}{*}{1.8} 
		
		& 4 & 4.2801e-02 &  & 3.8129e-02 &  & 9.6792e-03 & & 3.8393e-03 & \\
		& 8 & 1.2259e-02 & 1.80 & 1.4094e-02 & 1.44 & 7.5244e-04 & 3.69 & 3.6381e-04 & 3.40\\
		& 16 & 2.7540e-03 & 2.15 & 3.8466e-03 & 1.87 & 3.9382e-05 & 4.26 & 1.6023e-05 & 4.50\\
		& 32 & 6.0215e-04 & 2.19 & 8.8181e-04 & 2.13 & 2.0021e-06 & 4.30 & 7.1827e-07 & 4.48\\
		& 64 & 1.3172e-04 & 2.19 & 1.9414e-04 & 2.18 & 1.0435e-07 & 4.26 & 3.2796e-08 & 4.45\\
		& & & $\approx$2.2 &  &$\approx$2.2 & & $\approx$4.2 & & $\approx$4.2\\
		
		\hline
		\hline
	\end{tabular}
	\caption{Errors and convergence orders of the proposed B-spline collocation method for problem \eqref{eq:FDE} when $u(x)=\sin(\pi x^2)$.}\label{tab:sin}
\end{table}

\section{Conclusion and future perspective}\label{sec:conclusions}

We focused on a fractional differential equation in Riesz form discretized by a polynomial B-spline collocation method and we showed that, for an arbitrary degree $p$, the resulting coefficient matrices possess a Toeplitz-like structure. We computed the corresponding spectral symbol and we proved that it has a single zero at $0$ of order $\alpha$, with $\alpha$ the fractional derivative order that ranges from $1$ to $2$, and it presents an exponential decay to zero at $\pi$ for increasing $p$ that becomes faster as $\alpha$ approaches $1$. This translates in a mitigated conditioning in the low frequencies and in a deterioration in the high frequencies when compared to second order problems. Moreover, we showed that the behavior of the symbol at $\pi$ is well captured by the symbol corresponding to $\alpha=0$ which is a trigonometric polynomial bounded in the neighborhood of $\pi$. 

As a side result of the symbol computation, we ended up with a new way to express the central entries of the coefficient matrix as inner products of two fractional derivatives of cardinal B-splines.

In addition, we performed a numerical study of the approximation behavior of polynomial B-spline collocation. This study suggests that the approximation order \cmag{for smooth solutions in the fractional case is $p+2-\alpha$ for even $p$, and $p+1-\alpha$ for odd $p$, which is in line with approximation results known for standard (non-fractional) diffusion problems \cite{ABHRS}.}

\cmag{The investigation presented here is intended as a first step towards the use of collocation methods based on high-order polynomial B-splines for FDE problems. 
	In particular, (locally) non-uniform knot sequences could be considered to improve accuracy for non-smooth solutions.
	In this perspective, B-spline collocation methods provide a robust, problem-independent tool to face FDE problems. This robustness makes them an appealing alternative to state-of-the-art methods, such as the elegant collocation/Galerkin spectral methods for approximating the solution of \eqref{eq:FDE} obtained by exploiting the connection  between Jacobi polynomials and pseudo eigenfunctions of the Riesz fractional operator; see \cite{Mao-sinum-18,Mao-sisc-17} and references therein.
	
	The spectral analysis in the present work will provide a strong guidance for forthcoming research. 
	Indeed, the result in Theorem~\ref{thm:spectral-A} is a key ingredient for studying the symbol of matrices arising from B-spline collocation methods for more general FDE problems.
	In particular, additional reaction and advection terms do not modify the symbol of the corresponding matrices; see Remark~\ref{rem:adv-rea}. Furthermore, FDE problems involving non-constant coefficients can be addressed by applying the framework of GLT (Generalized locally Toeplitz) sequences~\cite{GS}. 
	
	Following the results in \cite{cmame2,sinum,mazza2016,sisc}, all the information provided by the symbol can be leveraged for the design of effective preconditioners and fast multigrid/multi-iterative solvers whose convergence speed is independent of the fineness parameters and the approximation parameters as well as of the fractional derivative order; see also Remarks~\ref{rem:mim} and~\ref{rem:mim-A}. 
}

\section*{Acknowledgements}
All authors are members of the INDAM research group GNCS. The first author was partly supported by the GNCS-INDAM Young Researcher Project 2020 titled \lq\lq Numerical methods for image restoration and cultural heritage deterioration''. The last two authors are partially supported by the Beyond Borders Program of the University of Rome Tor Vergata through the project ASTRID (CUP E84I19002250005) and by the MIUR Excellence Department Project awarded to the Department of Mathematics, University of Rome Tor Vergata (CUP E83C18000100006).

\bibliographystyle{amsplain}

\end{document}